\numberwithin{equation}{section}
\newtheorem{thm}{Theorem}[section]
\newtheorem*{thm*}{Theorem}
\newtheorem{lem}[thm]{Lemma}
\theoremstyle{definition}
\DeclareMathOperator{\DIV}{div}
\DeclareMathOperator{\tr}{tr}
\DeclareMathOperator{\CURL}{curl}
\newcommand{\R}{\mathbb{R}}
\newcommand{\N}{\mathbb{N}}
\newcommand{\T}{\mathbb{T}}
\newcommand{\B}{\mathbb{B}}
\newcommand{\F}{\mathbb{F}}
\newcommand{\A}{\mathbb{A}}
\newcommand{\D}{\mathbb{D}}
\newcommand{\bG}{\mathbb{G}}
\newcommand{\OO}{\mathbb{O}}
\newcommand{\p}{\partial}
\newcommand{\diff}{\mathop{}\!\mathrm{d}}
\newcommand{\symmb}{\mathbb{R}^{2\times 2}_{\text{sym}}}
\newcommand{\vect}[1]{\boldsymbol{#1}}
\def\bq{\vect{q}}
\def\bv{\vect{v}}
\def\bu{\vect{u}}
\def\b0{\vect{0}}
\def\bphi{\vect{\varphi}}
\def\bomega{\vect{\omega}}
\newcommand{\doublewidetilde}[1]{{%
  \mathpalette\double@widetilde{#1}%
}}
\newcommand{\double@widetilde}[2]{%
  \sbox\z@{$\m@th#1\widetilde{#2}$}%
  \ht\z@=.9\ht\z@
  \widetilde{\box\z@}%
}
\author{Miroslav Bul\'{\i}\v{c}ek}
\address{Mathematical Institute, Faculty of Mathematics and Physics, Charles University, Sokolovsk\'{a} 83, 186~75, Prague, Czech Republic}
\email{mbul8060@karlin.mff.cuni.cz}
\thanks{Miroslav Bul{\'\i}{\v{c}}ek was supported by the project No. 20-11027X financed by GA\v{C}R. He is a  member of the Ne\v{c}as Center for Mathematical Modeling. ORCID: 0000-0003-2380-3458; corresponding author}
\author{Jakub Woźnicki}
\address{Faculty of Mathematics, Informatics and Mechanics, University of Warsaw, Stefana Banacha 2, 02-097 Warsaw, Poland; Institute of Mathematics of Polish Academy of Sciences, Jana i J\k edrzeja \'Sniadeckich 8, 00-656 Warsaw, Poland}
\email{jw.woznicki@student.uw.edu.pl}
\thanks{Jakub Woźnicki was supported by National Science Center, Poland through the project no. 2023/49/N/ST1/02737. ORCID: 0000-0002-7720-5261.
%This research was funded in whole or in part by National Science Center, Poland 2023/49/N/ST1/02737. For the purpose of Open Access, the author has applied a CC-BY public copyright licence to any Author Accepted Manuscript (AAM) version arising from this submission.
}
\begin{document}

\title[Thermo-visco-elastic fluids]{On various models describing behaviour of thermoviscoelastic rate-type fluids}

\begin{abstract}
Viscoelastic rate-type fluid models are essential for describing the behavior of a wide range of complex materials, with applications in fields such as engineering, biomaterials, and medicine. These models are particularly useful for understanding the rheological properties of materials that exhibit both elastic and viscous behavior under deformation. However, many real-world applications involve significant thermal effects, where heat conduction and the temperature dependence of material properties must also be considered. In this paper, we introduce a thermodynamically consistent model for heat-conducting viscoelastic rate-type fluids and establish the existence of a global weak solution in a two-dimensional setting. The result holds under the condition that the initial energy and entropy are controlled in appropriate natural norms.
\end{abstract}

\keywords{non-Newtonian fluids, Giesekus model, thermo-visco-elasticity, global-in-time and large-data existence theory}
\subjclass{35A01, 35Q35, 76A10, 76D03}

\maketitle

\section{Introduction}

Our main goal in this paper is to study the equations describing the motion of incompressible viscoelastic and heat conducting fluids and in particular to establish the long-time and the large-data theory. We focus here only on the planar case, i.e., in what follows, the fluid occupies the Lipschitz domain $\Omega\subset \mathbb{R}^2$ and $T>0$ always denotes the length of time interval. The motion of incompressible fluid is in general described by the system of partial differential equations of the form
\begin{equation}\label{MB:1}
\begin{split}
    \partial_t \bv + \DIV_x (\bv\otimes \bv) -\DIV_x\T = \b0, \\
    \DIV_x u = 0,
\end{split}
\end{equation}
that is supposed to be satisfied in $\Omega_T:=(0,T)\times \Omega$. Here, $\bv:\Omega_T\to \mathbb{R}^2$ is the unknown velocity field and $\T:\Omega_T\to \symmb$ denotes the Cauchy stress tensor, and the density of the fluid is set to be equal to one for simplicity. The above system must be completed with the initial and boundary conditions, but most importantly, the constitutive relationship for $\T$ must be prescribed. We shall consider the classical form
\begin{equation}\label{Cauchy}
\T:=  -p\mathbb{I} + 2\nu \D \bv + 2g(\B - \mathbb{I}),
\end{equation}
where $p$ is the unknown pressure,  $\D\bv :=\frac12((\nabla \bv)+ (\nabla \bv)^T)$ is the symmetric velocity gradient and $\B:\Omega_T\to \symmb$ is the so-called extra stress. The material parameter $\nu$ - the viscosity, and $g$ - the shear modulus are in what follows the functions of the temperature. We will specify the particular assumptions below.

The first two terms on the right-hand side of~\eqref{Cauchy} describe the classical Newtonian fluid, but the last term, extra stress, represents the elastic effects. Hence, we need to add an equation for $\B$ and we shall assume the so-called rate-type models. This means that we complete the system~\eqref{MB:1}  by the following system of  equations
\begin{equation}\label{Gies}
\overset{\nabla}{\B} + \delta (\B^2 - \B)+ \gamma (\B - \mathbb{I}) = \OO,
\end{equation}
where $\delta$ and $\gamma$ are material parameters related to the relaxation times in the material and possibly depending on the temperature. The symbol $\overset{\nabla}{\B}$ denotes the Oldroyd upper convective derivative defined as
\begin{align}\label{Oldroyd}
    \overset{\nabla}{\B} := \partial_t \B + (\bv\cdot \nabla_x)\,\B - \nabla_x \bv\,\B - \B\,(\nabla_x \bv)^T.
\end{align}
Note that this is the often-used objective derivative applied to $\B$ in the context of rate-type models.

The system \eqref{MB:1}--\eqref{Gies} with $\delta:=0$ and $\gamma>0$ is generally referred to as the Oldroyd-B model, see \cite{oldroyd1950onthe}, and belongs to one of the classical models of viscoelasticity. However, any mathematical theory which would lead to the global well-posed theory is missing. The other model with $\gamma:=0$ and $\delta>0$ proposed by Giesekus, see \cite{giesekus1982asimple}, is also widely used and, contrary to the Oldroyd-B model, it seems to have much better mathematical properties. Note also that beside the upper convective derivative~\eqref{Oldroyd}, one can also consider a much more general class of objectives derivatives, see~\cite{johnson1977amodel}, but it usually does not affect the qualitative analysis. Similarly, one may also consider models of second order, see \cite{burgers1939mechanical}, but again, once we are able to deal with the classical Giesekus model, it seems that further generalisations are rather straightforward. Therefore, we do not consider them here and rather focus on canonical problems. Hence, the  system we are interested in reads 
\begin{equation}\label{eq:main_syst}
\left\{
\begin{aligned}
    &\partial_t \bv + \DIV_x (\bv\otimes \bv) -\DIV_x\T = 0, \\
    &\DIV_x \bv = 0,\\
    &\overset{\nabla}{\B} + \delta(\theta)(\B^2 - \B) = \OO,\\
    &-p\mathbb{I} + 2\nu(\theta)\D\bv + 2g(\theta)(\B - \mathbb{I}) = \T,
\end{aligned}
\right.
\end{equation}
and we assume that all material coefficients may depend on the temperature $\theta:\Omega_T \to \mathbb{R}$.  Once we have added thermal effects into the system (the dependence of coefficients on the temperature), we must also impose the validity of the first law of thermodynamics, i.e. the conservation of energy. One of the possible equivalent form (valid for sufficiently smooth solution) is the equation for the internal energy $e:\Omega_T \to \mathbb{R}$, where we assume that the heat flux is given by the Fourier law
\begin{align}\label{eq:main_syst_internal_energy}
    \p_t e + \DIV_x (\bv e) -\DIV_x(\kappa(\theta)\nabla_x\theta) = \T : \D\bv,
\end{align}
where $\kappa$ denotes the heat conductivity. Note here that the symbol $\mathbb{A} : \mathbb{B}$ denotes the matrix scalar product $\mathbb{A} : \mathbb{B} = \sum_{i,j=1}^2 \mathbb{A}_{ij} \mathbb{B}_{ij}$.

It remains to specify the storage and dissipative mechanisms of the fluid, or in the purely mathematical sense, it remains to specify the relation between the internal energy $e$, the temperature $\theta$ and the tensor $\mathbb{B}$. Further, this relation must be such that one can identify an entropy of the system, which fulfils the second law of thermodynamics. Here, we closely follow the approach developed in \cite{Raj00} and in \cite{Hron17}, where the Helmholtz free energy plays the crucial role. We may also refer to~\cite{DEO99} for the first systematic approach or to \cite{HuLe}, where the entropy estimates were derived for the first time.

Hence, we assume here that the Helmholtz free energy is given by
\begin{align}\label{helmholtz_free_energy}
\psi (\theta, \B) = -c_v\theta(\ln \theta - 1) + g(\theta)f(\B),
\end{align}
where the function $f:\symmb \to \mathbb{R}$ is given by the formula
\begin{align}\label{fdf}
    f(\B) = \tr \B - 2 - \ln\det\B.
\end{align}
The first term in~\eqref{helmholtz_free_energy} corresponds to the classical law when $e=c_v \theta$ with $c_v$ being the heat capacity, and the second term connects the elastic and the temperature effects. Note that if $g$ is a constant function, then this choice is very much classical in the theory of viscoelastic rate-type fluids. However, here, we deal with a more general setting similar to that of~\cite{bathory2021largedata}. Having introduced the free energy, we define the entropy $\eta$ as
$$
\eta=\eta (\theta, \B) := -\p_\theta \psi (\theta, \B) = c_v\ln\theta - g'(\theta)f(\B),
$$
and the internal energy is then related by the formula
\begin{align}\label{equation_for_internal_energy_for_g_theta}
e=e(\theta,\B):= \psi(\theta, \B) + \theta\eta(\theta,\B) = c_v\theta + (g(\theta) - \theta\,g'(\theta))f(\B).
\end{align}
Using these definitions, we see that
$$
\begin{aligned}
\partial_t e &= \partial_t \psi + \partial_t \theta \, \eta + \theta \, \partial_t \eta = \partial_t \theta(\partial_{\theta} \psi  +  \eta) + \partial_{\mathbb{B}} \psi \partial_t \mathbb{B} +\theta \partial_t \eta=\partial_{\mathbb{B}} \psi \partial_t \mathbb{B} +\theta \partial_t \eta\\
&\implies \partial_t \eta= \frac{\partial_t e-\partial_{\mathbb{B}} \psi \partial_t \mathbb{B}}{\theta} = \frac{\partial_t e - \partial_t \mathbb{B} : g(\theta)(\mathbb{I}-\mathbb{B}^{-1}) }{\theta},
\end{aligned}
$$
where we used the following identity 
$$
\partial_{\mathbb{B}} \psi (\theta,\B) = g(\theta) \partial_{\B} f(\B) = g(\theta)(\mathbb{I}-\mathbb{B}^{-1}).
$$
Thus, dividing \eqref{eq:main_syst_internal_energy} by $\theta$, taking the scalar product of the third equation in~\eqref{eq:main_syst} with $\frac{g(\theta)(\mathbb{I}-\mathbb{B}^{-1}) }{\theta}$ and subtracting the result, we obtain the following identity for the entropy
\begin{align}\label{eq:main_syst_entropy}
    \partial_t\eta + \DIV_x (\eta\, \bv) - \DIV_x(\kappa(\theta)\nabla_x(\ln\theta)) = \frac{\kappa(\theta)|\nabla_x\theta|^2}{\theta^2} + \frac{2\nu(\theta)|\D \bv|^2}{\theta} + \frac{\delta(\theta)|\B - \mathbb{I}|^2}{\theta}\ge 0.%,\quad \kappa(\theta) >0.
\end{align}
Here, it is essential that the right-hand side is nonnegative and the term $\kappa(\theta)\nabla_x(\ln\theta)$ represents the entropy flux. It also directly follows from the above procedure that the equation for the internal energy~\eqref{eq:main_syst_internal_energy} and the equation for the entropy~\eqref{eq:main_syst_entropy} are interchangeable in \eqref{eq:main_syst} for sufficiently regular solutions. This ``equivalence'' of~\eqref{eq:main_syst_internal_energy} and~\eqref{eq:main_syst_entropy} is also later used not only for the analysis of the problem but also for the definition of a notion of solution. 

Finally, we always assume that $\mathbb{B}$ is of the form
\begin{align*}
    \B = \F\,\F^T, \quad \F\in \R^{2\times 2},\quad \det \F > 0.
\end{align*}
The above identification can be guaranteed, by imposing the following equation for $\F$
\begin{equation}\label{eF}
\p_t\F + \DIV_x(\F\otimes \bv) - \nabla_x \bv\,\F + \frac{1}{2}\delta(\theta)(\F\,\F^T\,\F - \F) = 0.
\end{equation}
In fact, multiplying this identity by $\mathbb{F}^T$ on the right, then transposing the equation~\eqref{eF}  and multiplying the result by $\mathbb{F}$ from the left, adding the resulting identity together, we finally observe that $\B := \F \F^T$ then satisfies~\eqref{Gies} with $\gamma=0$.

We have already introduced the complete set of equations and we close the problem by imposing the following boundary and initial conditions
\begin{align}\label{eq:boundaryconditions}
\bv|_{\p\Omega} = 0, \quad \bv(0, x) = \bv_0(x), \quad \B(0, x) = \B_0 = \F_0\,\F_0^T, \quad \nabla_x\theta\cdot \mathbf{n} = 0, \quad \theta(0, x) = \theta_0.
\end{align}
Furthermore, the initial velocity is solenoidal and have bounded kinetic energy and the  internal initial energy and that the initial entropy are also under control, i.e.,
$$
\begin{aligned}
&\|\bv_0\|_2,\,\|\F_0\|_2,\,\|\tr \B_0 - 2 - \ln\det\B_0\|_1, \,\|\ln\det\B_0\|_\infty,\,\|\theta_0\|_1, \|\ln \theta_0\|_1 < +\infty,\\
&\det \F_0 >0, \qquad \DIV_x \bv_0 \equiv 0  \textrm{ a.e. in } \Omega.
\end{aligned}
$$

Finally, we introduce three specific choices of the form of the Helmholtz free energy and in particular the function $g$ appearing in~\eqref{helmholtz_free_energy}:
\begin{enumerate}[label = \bf(P\arabic*)]
    \item {$ g(\theta)\equiv const >0$:} In this case the entropy depends only on the temperature and has the form
    $$
    \eta = c_v\ln \theta.
    $$ \label{contant_g_case}
    Furthermore, one has a relatively good equation for the temperature $\theta$, namely multiplying \eqref{eq:main_syst_entropy} by $\theta$, we obtain
    \begin{align}\label{thetaP1}
    c_v\partial_t \theta + c_v\DIV_x (\theta \, \bv) - \DIV_x(\kappa(\theta)\nabla_x \theta) =  2\nu(\theta)|\D \bv|^2 +\delta(\theta)|\B - \mathbb{I}|^2.%,\quad \kappa(\theta) >0.
\end{align}
    
    \item $g(\theta) = \tilde{g}\theta$ and $\tilde{g}\in \mathbb{R}_+$: In this case,  the internal energy $e$ depends only on the temperature $\theta$ and is of the form
    $$
    e = c_v\theta.
    $$\label{linear_g_case}
    Therefore the equation for the internal energy \eqref{eq:main_syst_internal_energy} is identical to the evolutionary equation for the temperature.
    
    \item {\bf $g:\mathbb{R}_+ \to \mathbb{R}_+$ is continuous, nonnegative and concave:}  In this case, we do not have any simple form of the equation for the internal energy and the equations connecting temperature and the elastic tensor are coupled in a non-trivial way. The evolutionary equation for $\theta$ depends nonlinearly on $\B$ and also $e$.\label{general_g_case}
\end{enumerate}
Our aim is to develop the theory related to the three cases above, in particular, to the most difficult case~\ref{general_g_case}. To simplify the presentation, we prove the weak sequential stability of solutions to~\ref{general_g_case}, see Section~\ref{S:3}. Although it is not the detail proof of existence, it is clear that the complete proof can be established. This fact is very much supported by our second result of the paper, that is connected to the case~\ref{contant_g_case} and is presented in Section~\ref{S:4}, where we show the existence of weak solution. Section~\ref{S:2} is devoted to the basic notation, and in Appendix~\ref{AP} we state some classical auxiliary propositions and recall several useful facts.

To end this introductory part, we recall available results and emphasise the key novelty of the paper. We start by recalling the results without thermal effects. As we have already mentioned, the result for the full Oldroyd-B model is an open problem. The only available result (for global solutions) is due to Lions and Masmoudi~\cite{lions2000global}, where the existence is shown even in three dimensional setting but only for the so-called co-rotational case. On the other hand, for the Giesekus model (or for even more general class), the first serious attempt for global theory was done in~\cite{masmoudi2011global}. Although the key idea there is correct, there were still some gaps in the proof, that had to be corrected. The first rigorous result for such a class of fluids was established in \cite{bathory2021largedata} with one proviso, an additional stress diffusion term $\Delta \mathbb{B}$ was added to the problem, which simplified the analysis a little bit. The problem without stress diffusion was finally solved in~\cite{bulicek2022onplanar} for the two-dimensional setting and later in~\cite{bulicek2024threeD} for the three-dimensional setting and for a much more complex class of models. For compressible setting, we refer also to a particular result in ~\cite{bulicek2019onaclass}.  For problems with temperature, the situation is more delicate. Even for problems without viscoelastic effects, the theory is relatively new, and here we refer to \cite{Co00,BuFeMa09,BuMaRa09} for relevant results. The problems with elastic effects and temperature were first rigorously treated in~\cite{BuMaPrSu21}, where, however, only spherical stresses were assumed, i.e. $\mathbb{B}= b\mathbb{I}$. Much more general results were established in~\cite{BaBuMa24}, where the authors again assumed a modification of the Giesekus model by adding stress diffusion into the system and focussing only on the linear case~\ref{linear_g_case}. The key novelty of the paper is that we do not need any stress diffusion term in the equation or the simple form of the Helmholtz free energy. Furthermore, it is clear from the proof that we are also able to cover the three-dimensional case and also a more complex model - to such a setting will be devoted our forth-coming paper.

To end this part, it is essential to mention that the paper heavily relies not only on the results mentioned above, but also on the following two very classical techniques. First, we should mention the theory of renormalisation for the transport equation developed in~\cite{diperna1989ordinary}, which is used when dealing with the equation for $\mathbb{B}$. Second, the compensated compactness method applied to heat conductive fluids, which was introduced by Feireisl, see~\cite{Fe04} and further generalised and extended to other cases, see~\cite{FeNo17,FeNo22}  and~\cite{BuJuPoZa22}. These methods are used to deduce the compactness of temperature~$\theta$ and tensor~$\mathbb{B}$.

To finish this introductory part,  we recall the system in which we are interested. The above equations and constraints in  \eqref{eq:main_syst}--\eqref{eF} can be written in one form as the equations for the unknowns $(\bv, \theta, \F, p, \T, e)$ 
\begin{equation}\label{main_sys_for_g_theta}
\left\{
\begin{aligned}
    &\partial_t \bv + \DIV_x (\bv\otimes \bv) -\DIV_x\T = 0, \qquad \DIV_x \bv =0, \\
        &\p_t\F + \DIV_x(\F\otimes \bv) - \nabla_x \bv\,\F + \frac{1}{2}\delta(\theta)(\F\,\F^T\,\F - \F) = \mathbb{O},\\
        &\p_t e + \DIV_x(\bv  e) -\DIV_x(\kappa(\theta)\nabla_x\theta) = (2\nu(\theta)\D \bv + 2g(\theta)\F\,\F^T): \D\bv,\\
    &\T=-p\mathbb{I} + 2\nu(\theta)\mathbb{D}\bv + 2g(\theta)(\F\,\F^T),\qquad  e=\theta + (g(\theta) - \theta\,g'(\theta))f(\B).
\end{aligned}
\right.
\end{equation}
with the initial and boundary conditions
\begin{align}\label{eq:boundaryconditions_for_g_theta}
\bv|_{\p\Omega} = 0, \quad \nabla_x\theta\cdot\mathbf{n}|_{\p\Omega} = 0, \quad \bv(0, x) = \bv_0(x), \quad \F(0, x) = \F_0, \quad \theta(0, x) = \theta_0.
\end{align}
%One can observe that $\eqref{eq:main_syst}_3$ follows formally from $\eqref{main_sys_for_g_theta}_4$ by multiplying it by $\F^T$, its transposition by $\F$, and adding both equations together. Let us state the main theorem that we are going to prove.
The key result of the paper, without rigorous definitions, can then be formulated as follows. For rigorous statements, we refer the reader to Sections~\ref{S:3} and \ref{S:4}.
\begin{thm*}
Let $g$ in~\ref{general_g_case} be properly chosen. Then for the relevant initial data, there is a weak global-in-time solution to \eqref{main_sys_for_g_theta}--\eqref{eq:boundaryconditions_for_g_theta}.
\end{thm*}
The notion of a weak global-in-time solution here is also a key concept. While, we always deal with distributional solutions to \eqref{main_sys_for_g_theta}$_1$--\eqref{main_sys_for_g_theta}$_2$. Equation \eqref{main_sys_for_g_theta}$_3$ can hardly be obtained in a general setting. Therefore, we relax a notion of a solution to \eqref{main_sys_for_g_theta}$_3$ such that, we require only the entropy inequality, i.e., we use \eqref{eq:main_syst_entropy} with the inequality sign 
\begin{align}\label{eq:main_syst_entropy_II}
    \partial_t\eta + \DIV_x (\eta\, \bv) - \DIV_x(\kappa(\theta)\nabla_x(\ln\theta)) \ge \frac{\kappa(\theta)|\nabla_x\theta|^2}{\theta^2} + \frac{2\nu(\theta)|\D \bv|^2}{\theta} + \frac{\delta(\theta)|\B - \mathbb{I}|^2}{\theta}%,\quad \kappa(\theta) >0.
\end{align}
and we complete the system by requiring the conservation of the total energy, which means that we require 
\begin{equation}\label{concepts}
\frac{\diff}{\diff t} \int_{\Omega}\frac{|\bv|^2}{2} + e \diff x =0.
\end{equation}
It is worth noticing, that when $e,\eta,\bv$ are sufficiently regular and satisfy \eqref{eq:main_syst_entropy_II} and \eqref{concepts} then they satisfy \eqref{main_sys_for_g_theta}$_3$ as well. For such an approach, we refer the interested reader to \cite{Fe04}.

\section{Preliminaries} \label{S:2}
In this paper, $x\in\Omega \subset \mathbb{R}^d$ always denotes the spatial variable, while $t\in (0, T)$ is reserved for the time variable. Through the paper, we consider mostly the dimension $d=2$, but in several technical tools we also recall the general case $d\ge 2$. The scalar functions are written in italics, e.g. $a\in \mathbb{R}$, the vector-valued objects are written in bold face, e.g. $\vect{a}\in \mathbb{R}^d$, the matrices by capital bold face, e.g. $\mathbb{A}\in \mathbb{R}^{d\times d}$ and the third order tensor as $\mathfrak{A}\in \mathbb{R}^{d\times d \times d}$. In addition, to simplify the notation, we write $\vect{a}\cdot \vect{b}$ for a standard scalar product whenever $\vect{a},\vect{b}\in \mathbb{R}^d$. Similarly, by $\mathbb{A}: \mathbb{B}$, we denote the scalar product between two matrices $\mathbb{A}, \mathbb{B}\in \R^{d\times d}$, and their classical matrix product by $\mathbb{A}\,\mathbb{B}$. Finally, by the symbol $\mathfrak{A}\because \mathfrak{B}$, we will denote the scalar product between $\mathfrak{A}, \mathfrak{B}\in \R^{d\times d\times d}$. In addition, the symbol $\otimes$ is reserved for the tensorial product, i.e., for $\vect{a},\vect{b}\in \mathbb{R}^d$ we denote $\vect{a}\otimes \vect{b} \in \mathbb{R}^{d\times d}$ as $(\vect{a}\otimes \vect{b})_{ij}:=a_ib_j$ for $i,j=1,\ldots, d$. For a matrix $\mathbb{A} = (A_{ij})_{i,j=1}^d$ and a vector $\vect{b}=(b_1,\ldots, b_d)$ we define the third order tensorial product as
$$
(\mathbb{A}\otimes \vect{b})_{ijk} = A_{ij}\,b_k,
$$
and its divergence as
$$
\DIV_x(\mathbb{A}\otimes \vect{b}) = \sum_{j = 1}^d\p_{x_j}(b_j\,\mathbb{A}).
$$
We use the standard notation for the Sobolev and Lebesgue function space, and we frequently do not distinguish between their scalar-, vector-, or matrix-valued variants. In addition, to shorten the notation, we frequently use the following simplifications. When $f \in L^p(\Omega)$, we simplify it to $f \in L^p_x$. Similarly, if $f \in L^p(0,T; L^q(\Omega))$, $f \in L^p(0,T; W^{1,q}(\Omega))$ or $f \in L^p(0,T; W^{1,q}_0(\Omega))$, then we write $f \in L^p_t L^q_x$, $f \in L^p_t W^{1,q}_x$ or $f \in L^p_t W^{1,q}_{0,x}$ respectively (here, $W^{1,q}(\Omega)$ and $W^{1, q}_0(\Omega)$ are the usual Sobolev spaces). In addition, to emphasise spaces with zero divergence, we define $W^{1,p}_{0,\DIV}:=\overline{\{\bv \in \mathcal{C}^1(\Omega; \R^d), \quad \DIV_x \bv =0\}}^{\|\cdot \|_{1,p}}$ and also $L^{p}_{0,\DIV}:=\overline{\{\bv \in \mathcal{C}^1(\Omega; \R^d), \quad \DIV_x \bv =0\}}^{\|\cdot \|_{p}}$ for any $p\in [1,\infty)$. Further, the dual spaces to Sobolev spaces and their subset are defined as usual, e.g. $W^{-1,p'}(\Omega):= (W^{1,p}(\Omega))^*$, $W^{-1,p'}_0(\Omega):= (W^{1,p}_0(\Omega))^*$, $W^{-1,p'}_{0,\DIV}(\Omega):= (W^{1,p}_{0,\DIV}(\Omega))^*$, etc. Here, for $p\in [1, +\infty]$ we denote by $p'$ its H\"{o}lder conjugate.  Also in what follows, by the symbol $C>0$ we denote a generic constant, that can change line to line and will depend only on data. In case, there is any essential dependence on other quantities, it will be clearly described.

Now, we introduce the assumptions regarding the function $\nu, \kappa, \delta, g$ appearing in equations \eqref{eq:main_syst} - \eqref{eq:main_syst_entropy}. We assume that the viscosity $\nu:\mathbb{R}_+ \to \mathbb{R}$ and the heat conductivity $\kappa:\mathbb{R}\to \mathbb{R}$ are continuous functions fulfilling for some positive constants $C_1,C_2>0$ and for all $s\ge 0$
\begin{align}
    C_1 \leq &\kappa(s) \leq C_2,\label{bounds_kappa}\\
    C_1\leq &\nu(s)\leq C_2,\label{bouds_nu}.% \\
    %C^{-1}\leq &\delta(w) \leq C\label{bounds_delta}
\end{align}
Concerning the parameter $\delta:\mathbb{R}\to \mathbb{R}$, we also assume that it is a continuous function, but we need to distinguish between the cases~\ref{contant_g_case} and~\ref{general_g_case}. For~\ref{contant_g_case}, we assume that for all $s\ge 0$ there holds
\begin{align}
    %C^{-1}\leq &\kappa(s) \leq C,\label{bounds_kappa}\\
    %C^{-1}\leq &\nu(s)\leq C,\label{bouds_nu}.% \\
    C_1\leq \delta(s) \leq C_2,\label{bounds_delta}
\end{align}
while in case~\ref{general_g_case}, we require that for all $s\ge 0$ there holds
\begin{align}\label{bounds_delta2}
        C_1(1+s)\le \delta(s) \le C_2(1+s).
\end{align}
Finally, for the most general case~\ref{general_g_case}, we need to specify the assumptions on the function $g$, which read as follows: 
%
%For the case~\ref{contant_g_case} we assume, that there exists $C > 0$ such that
%for any $w \in \R$. And for the case \ref{general_g_case}, that there exist $c_\nu, C_\nu, c_\kappa, C_\kappa > 0$ such that
%\begin{align}
%    c_\kappa\leq &\kappa(w) \leq C_\kappa,\label{bounds_kappa2}\\
%    c_\nu\leq &\nu(w)\leq C_\nu.\label{bounds_nu2}
%\end{align}
%for any $w \in \R$,
%\begin{align}\label{bounds_delta2}
%        \delta(w) \sim w
%\end{align}
%for any $w \in \R$. 
%Moreover, for a function $g$ we assume that
We assume that $g:\mathbb{R}\to \mathbb{R}$ is a $\mathcal{C}^2$ function, which is concave and increasing. In addition, we assume that for all $s\in [0,\infty)$ and all $\lambda\in[0,1]$ there holds
\begin{align}
C_1&\le g(s)\le C_2,\label{function:g_bounds_linfty}\\
0&\le (1+s)g'(s) \le C_2. \label{function:g'_bounds_linfty}%\\
%h_\lambda(s) &:= \int_0^s z^\lambda g''(z)\diff z \ge -C_2.\label{function:g_technical_ass_1} 
\end{align}
We finish this introductory part by defining the auxiliary function $h_{\lambda}$, where the parameter $\lambda\in [0,1]$. We set
\begin{equation}
h_\lambda(s) := \int_0^s z^\lambda g''(z)\diff z.\label{function:g_technical_ass_1} 
\end{equation}
Note that it directly follows from the concavity of $g$ that $h_{\lambda}$ is nonpositive and fulfills 
\begin{equation}
\begin{aligned}
|h_\lambda(s)|&=-\int_0^s z^\lambda g''(z)\diff z \le -\int_0^s (1+z) g''(z)\diff z = -(1+s)g'(s) + g'(0) +\int_0^s g'(z)\diff z \\
&= -(1+s)g'(s) + g'(0) + g(s)-g(0)\le 2C_2, \label{function:g_technical_ass_2}
\end{aligned}
\end{equation}
where we used the assumptions \eqref{function:g_bounds_linfty}--\eqref{function:g'_bounds_linfty}.

%{\tt TODO: function space diver}

%\begin{enumerate}[label = (G\arabic*)]
%    \item $g\in \mathcal{C}^2(\R)$,
%    \item $g$ is concave,
%    \item $g$ is increasing,
%    \item $\max_{w\geq 0} g(w) < +\infty$, $\min_{w\geq 0}g(w) = g(0) > 0$,\label{function:g_bounds_linfty}
%    \item $g'(w)$ is a bounded function,\label{function:g'_bounds_linfty}
%    \item $w\,g'(w)$ and $h_\lambda(w)$ are bounded functions, where $h_\lambda(w) := \int_0^w z^\lambda g''(z)\diff z$.\label{function:g_technical_ass_1}
%    \iffalse
%    \item
%    $$
%    \sup_{w\geq 0}\left(\delta(w)(g'(w)w^{1-a} - h_{1-a}(w)) + \frac{w^a}{c_\nu}(g'(w)w^{1-a} - h_{1-a}(w))^2\right) = c_{\mathrm{tech}} < 1,
%    $$
%    where $h$ is defined as above and $a$ is given in the definition of $\delta$ \eqref{bounds_delta2}.\label{function:g_technical_ass_2}
%    \fi
%\end{enumerate}

\section{Weak sequential stability of strong solutions for the case \ref{general_g_case}}\label{S:3}

We first formulate a rigorous version of the main theorem of the paper. It deals with the most general case~\ref{general_g_case} and the sequential stability of the solutions. 
\begin{thm}\label{main_theorem_case_general_g}
Let  $\{\bv_n, \F_n, \theta_n\}_{n=1}^{\infty}\subset  \mathcal{C}^1((0,T)\times\Omega;\R^2)\times \mathcal{C}^1((0,T)\times\Omega;\R^{2\times 2})\times \mathcal{C}^{1,2}((0,T)\times\Omega;\R_+)$ with $\det \F_n, \theta_n >0$ be a sequence of solutions to~\eqref{main_sys_for_g_theta}--\eqref{eq:boundaryconditions_for_g_theta} with initial conditions $\{\bv_0^n, \theta_0^n, \F_0^n\}_{n=1}^{\infty}$ fulfilling $\det\F_0^n, \theta_0^n > 0$ and 
\begin{equation}\label{conv:init}
\begin{aligned}
\bv_0^n &\to \bv_0 &&\textrm{strongly in } L^2_{0,\DIV},\\
\theta_0^n &\to \theta_0 &&\textrm{strongly in } L^1_x,\\
\ln \theta_0^n &\to \ln \theta_0 &&\textrm{strongly in } L^1_x,\\
\F_0^n &\to \F_0 &&\textrm{strongly in } L^2_x,\\
\ln \det F_0^n &\to \ln \det \F_0 && \textrm{strongly in } L^2_x. 
\end{aligned}
\end{equation}
Then, there exists a subsequence (which we do not relabel) and a triple  $(\bv,\F,\theta)$ such that: we have the following convergence results for $\bv_n$:
\begin{equation}\label{cnv_v}
\begin{aligned}
    \bv_n &\overset{*}{\rightharpoonup} \bv &&\text{ weakly* in }L^\infty_t L^2_x,\\
    \bv_n &\rightharpoonup \bv &&\text{ weakly in }L^2_t W^{1,2}_{0,x},\\
    \bv_n &\rightharpoonup \bv &&\text{ weakly in } L^4_{t,x},\\
    \bv_n &\rightarrow \bv &&\text{ strongly in }\mathcal{C}([0, T]; L^2_{0,\DIV}),
\end{aligned}
\end{equation}
the following convergence results for $\F_n$:
\begin{equation}\label{cnv_F}
\begin{aligned}
    \F_n &\overset{*}{\rightharpoonup} \F &&\text{ weakly* in }L^\infty_t L^2_x,\\
    \F_n &\rightharpoonup \F &&\text{ weakly in } L^4_{t,x},\\
    \F_n &\rightarrow \F &&\text{ strongly in } L^p_{t,x}, 1\leq p < 4,
\end{aligned}
\end{equation}
the following convergence results for $\theta_n$: 
\begin{equation}\label{cnv_theta}
\begin{aligned}
    \theta_n &\rightarrow \theta &&\text{ strongly in }L^{2-\varepsilon}_{t,x},\, \varepsilon\in(0,1),\\
    \nabla_x\theta_n &\rightharpoonup \nabla_x\theta &&\text{ weakly in }L^{\frac{4}{3} - \varepsilon}_{t,x}, \,\varepsilon\in \left(0, \frac{1}{3}\right),
\end{aligned}
\end{equation}
and the following convergence results for ``entropy'' quantities 
\begin{align}
    \ln(\theta_n) &\rightharpoonup \ln(\theta) &&\text{ weakly in }L^2_t W^{1,2}_x,\\
    \ln(\theta_n) &\rightarrow \ln(\theta) &&\text{ strongly in }L^{3 - \varepsilon}_{t,x}, \,\varepsilon\in (0, 1),\\
    \tr (\F_n\,\F_n^T) &\rightharpoonup \mathrm{tr}(\F\,\F^T) &&\text{ weakly in }L^2_{t,x},\\
    \ln\det(\F_n\,\F_n^T) &\overset{*}{\rightharpoonup} \ln\det(\F\,\F^T) &&\text{ weakly* in }L^\infty_t L^{\frac{4}{3} - \varepsilon}_x, \,\varepsilon\in \left(0, \frac{1}{3}\right).
\end{align}
The limiting functions $(\bv, \F, \theta)$ solves  \eqref{main_sys_for_g_theta}--\eqref{eq:boundaryconditions_for_g_theta} in the following sense:
\begin{equation}
\begin{aligned}\label{weak_formulation_u_g_theta}
        &\int_0^T\int_\Omega -\bv \cdot\p_t\bphi - \bv\otimes \bv : \nabla_x\bphi + \nu(\theta)\D\bv :\nabla_x\bphi + g(\theta)\F\,\F^T:\nabla_x\bphi\diff x\diff t \\
        &\qquad = \int_\Omega \bv_0(x)\cdot\bphi(0, x)\diff x
\end{aligned}
\end{equation}
for any $\vect{\varphi}\in \mathcal{C}^1_c([0, T)\times\Omega; \R^2)$ with $\DIV_x\bphi = 0$,
\begin{equation}
\begin{aligned}\label{weak_formulation_F_g_theta}
        &\int_0^T\int_\Omega -\F:\p_t\bG - \F\otimes \bv \because \nabla_x \bG - \nabla_x \bv\,\F : \bG + \frac{1}{2}\delta(\theta)(\F\,\F^T\,\F - \F): \bG\diff x\diff t \\
        &\qquad = \int_\Omega \F_0(x) : \bG(0, x)\diff x
    \end{aligned}
\end{equation}
    for any $\bG\in \mathcal{C}^1_c([0, T)\times\Omega; \R^{2\times 2})$,
\begin{equation}
\begin{aligned}\label{weak_formulation_theta_g_theta}
        &\int_0^T\int_\Omega -(\ln(\theta) - g'(\theta)f(\F\,\F^T))\p_t\phi -(\ln(\theta) - g'(\theta)f(\F\,\F^T))\bv \cdot \nabla_x\phi + \kappa(\theta)\nabla_x\ln\theta\,\nabla_x\phi \diff x\diff t\\
        &\qquad \geq \int_0^T\int_\Omega \left(\frac{\kappa(\theta)|\nabla_x\theta|^2}{\theta^2} + \frac{2\nu(\theta)|\D \bv|^2}{\theta} + \frac{\delta(\theta)|\B - \mathbb{I}|^2}{\theta} \right)\diff x\diff t .%,\quad \kappa(\theta)
    \end{aligned}
\end{equation}
    for any $\phi\in \mathcal{C}^1_c([0, T)\times\Omega)$, $\phi\geq 0$,
\begin{equation}\label{Energy_def}
\begin{aligned}
&-\int_0^T \int_{\Omega} \left(\frac{|\bv|^2}{2} + \theta + (g(\theta) - \theta\,g'(\theta))f(\F\, \F^T)\right) \partial_t \varphi \diff x \diff t\\ 
&\qquad =\varphi(0)\int_{\Omega} \left(\frac{|\bv_0|^2}{2} + \theta_0 + (g(\theta_0) - \theta_0\,g'(\theta_0))f(\F_0\, \F_0^T)\right)\diff x
\end{aligned}
\end{equation} 
for any $\varphi\in \mathcal{C}^1_c([0, T))$. Moreover $\theta > 0$ and $\det\F > 0$.
\end{thm}

%\begin{proof}

The rest of this section is devoted to the proof of Theorem~\ref{main_theorem_case_general_g}, which is divided into several subparts.

\subsection{Auxiliary identities for \texorpdfstring{$\B_n$}{B} and \texorpdfstring{$\theta_n$}{}} 

We begin by recovering the equation for $\B_n := \F_n\,\F_n^T$. Namely, we multiply the equation $\eqref{main_sys_for_g_theta}_2$ by $\F_n^T$ from the right and the transpose of the mentioned equation by $\F_n$ from the left. Adding the results together we obtain
\begin{align}\label{equation_for_B_n_for_g_theta}
        \p_t\B_n + \DIV_x \left(\B_n \bv_n\right) + \delta(\theta_n)(\B_n^2 - \B_n) = \nabla_x \bv_n\,\B_n + \B_n\,(\nabla_x \bv_n)^T.
\end{align}
Next, similarly as in~\eqref{eq:main_syst_entropy}, we can derive the following identity
\begin{equation}
\begin{aligned}\label{eq:entropy_equality_for_g_theta}
    &\partial_t\eta_n + \DIV_x (\eta_n\, \bv_n) - \DIV_x(\kappa(\theta_n)\nabla_x(\ln\theta_n)) \\
    &\qquad = \frac{\kappa(\theta_n)|\nabla_x\theta_n|^2}{\theta_n^2} + \frac{2\nu(\theta_n)|\D \bv_n|^2}{\theta_n} + \frac{\delta(\theta_n)|\B_n - \mathbb{I}|^2}{\theta_n},%,\quad \kappa(\theta) >0.
\end{aligned}
\end{equation}
where the entropy is given as 
\begin{equation}\label{eq:entropyn}
\eta_n:= \ln \theta_n -g'(\theta_n) f(\B_n).
\end{equation}
We also at this point recall the definition of the internal energy
\begin{equation}\label{inten}
e_n:=\theta_n + (g(\theta_n) - \theta_n\,g'(\theta_n))f(\B_n).
\end{equation}
%
%
%With this, we may apply the material derivative to the equation \eqref{equation_for_internal_energy_for_g_theta} to get
%$$
%    \p_t e_n + u_n\,\nabla_x e_n = \p_{\B}\psi(\theta_n, \B_n)(\p_t\B_n + u_n\,\nabla_x\B_n) + \theta_n(\p_t\eta_n + u_n\nabla_x\eta_n),
%$$
%which together with the equations $\eqref{main_sys_for_g_theta}_5$, \eqref{equation_for_B_n_for_g_theta} and the fact that $f'(\B_n) = \mathbb{I} - \B_n^{-1}$ implies
%\begin{align}\label{eq:entropy_equality_for_g_theta}
%       \p_t\eta_n + u_n\cdot\nabla_x\eta_n - \DIV_x(\kappa(\theta_n)\nabla_x(\ln\theta_n)) = \kappa(\theta_n)\frac{|\nabla_x\theta_n|^2}{\theta^2_n} + %\nu(\theta_n)\frac{|Du_n|^2}{\theta_n} + \delta(\theta_n)\frac{|\B_n - \mathbb{I}|^2}{\theta_n}.
%\end{align}
Before moving forward, let us prove a structural lemma connected to equation \eqref{equation_for_B_n_for_g_theta} and \eqref{eq:entropy_equality_for_g_theta}.
\begin{lem}\label{lem:renormalized_equation_for_theta}
Let a triple $(\bv_n,\theta_n, \B_n)$ solve \eqref{eq:entropy_equality_for_g_theta}--\eqref{equation_for_B_n_for_g_theta} with $\eta_n := \ln(\theta_n) - g'(\theta_n)f(\B_n)$.  Then, for any $\lambda > 0$ there holds
\begin{equation}\label{lem:renorm}
            \begin{aligned}
                &\p_t \left( \frac{\theta_n^{\lambda}}{\lambda} - h_\lambda(\theta_n)\, f(\B_n)\right) + \DIV_x\left( \left(\frac{\theta_n^{\lambda}}{\lambda} - h_\lambda(\theta_n)\,f(\B_n)\right)\bv_n\right)\\
                &\quad- \DIV_x\left(\kappa(\theta_n)\nabla_x \frac{\theta_n^{\lambda}}{\lambda}\right)- 2(\B_n - \mathbb{I}) : \D\bv_n(g'(\theta_n)\theta_n^\lambda - h_\lambda(\theta_n))\\
                &= \left(\frac{(1-\lambda)\kappa(\theta_n)|\nabla_x\theta_n|^2}{\theta_n^2} + \frac{2\nu(\theta_n)|\D\bv_n|^2}{\theta_n} + \frac{\delta(\theta_n)|\B_n - \mathbb{I}|^2}{\theta_n}(h_{\lambda}(\theta_n)\theta^{1-\lambda}+1-g'(\theta_n)\theta_n)\right)\theta_n^\lambda %-  \delta(\theta_n)\frac{|\B -\mathbb{I}|^2}{\theta_n} (g'(\theta_n)\theta_n^\lambda - h_\lambda(\theta_n)) \theta_n^{\lambda},
            \end{aligned}
\end{equation}
where $h_{\lambda}(\theta)$ is defined in~\eqref{function:g_technical_ass_1} and $f(\B)$ is given by \eqref{fdf}.
\end{lem}
\begin{proof}
Taking the scalar product of~\eqref{equation_for_B_n_for_g_theta} with $\mathbb{I} - \B_n^{-1}$ and using the fact that $\partial_{\B_n} f(\B_n) = \mathbb{I} - \B_n^{-1}$, we deduce that
\begin{align}\label{eq:equation_for_f_of_B_for_g_theta}
            \p_t f(\B_n) + \DIV_x (f(\B_n) \bv_n) + \delta(\theta)|\B_n - \mathbb{I}|^2 = 2(\B_n - \mathbb{I}) : \D \bv_n.
\end{align}
Using the definition of the entropy in~\eqref{eq:entropy_equality_for_g_theta}, we have
\begin{equation}
\begin{aligned}\label{eq:entropy_equality_for_g_theta2}
    &\partial_{t}\theta_n \left(\frac{1}{\theta_n}-g''(\theta_n) f(\B_n)\right) -g'(\theta_n)\partial_t f(\B_n) \\
    &\quad + \DIV_x (\theta_n\, \bv_n) \left(\frac{1}{\theta_n}-g''(\theta_n) f(\B_n)\right) - g'(\theta_n)\DIV_x (f(\B_n)\, \bv_n)  \\
    &\quad - \DIV_x(\kappa(\theta_n)\nabla_x(\ln\theta_n)) \\
    & = \frac{\kappa(\theta_n)|\nabla_x\theta_n|^2}{\theta_n^2} + \frac{2\nu(\theta_n)|\D \bv_n|^2}{\theta_n} + \frac{\delta(\theta_n)|\B_n - \mathbb{I}|^2}{\theta_n},%,\quad \kappa(\theta) >0.
\end{aligned}
\end{equation}
Then we multiply~\eqref{eq:equation_for_f_of_B_for_g_theta}  by $g'(\theta_n)$ and add the result to  into \eqref{eq:entropy_equality_for_g_theta2}, we obtain
\begin{equation*}
\begin{aligned}%\label{eq:entropy_equality_for_g_theta2}
    &\partial_{t}\theta_n \left(\frac{1}{\theta_n}-g''(\theta_n) f(\B_n)\right)  + \DIV_x (\theta_n\, \bv_n) \left(\frac{1}{\theta_n}-g''(\theta_n) f(\B_n)\right)  \\
    &\qquad +g'(\theta_n) \delta(\theta)|\B_n - \mathbb{I}|^2 - \DIV_x(\kappa(\theta_n)\nabla_x(\ln\theta_n)) \\
    & = \frac{\kappa(\theta_n)|\nabla_x\theta_n|^2}{\theta_n^2} + \frac{2\nu(\theta_n)|\D \bv_n|^2}{\theta_n} + \frac{\delta(\theta_n)|\B_n - \mathbb{I}|^2}{\theta_n} + 2g'(\theta_n)(\B_n - \mathbb{I}) : \D \bv_n.%,\quad \kappa(\theta) >0.
\end{aligned}
\end{equation*}
Next, we multiply the result by $\theta_n^{\lambda}$ and use the definition of $h_{\lambda}$ in~\eqref{function:g_technical_ass_1} to observe that
\begin{equation}
\begin{aligned}\label{some_equation_for_lemma}
    &\partial_{t}\left(\frac{\theta_n^{\lambda}}{\lambda}\right)-f(\B_n) \partial_t h_{\lambda}(\theta_n)  + \DIV_x \left(\frac{\theta_n^{\lambda}\, \bv_n}{\lambda}\right) -f(\B_n) \DIV_x (h_{\lambda}(\theta_n)\, \bv_n) \\
    &\qquad - \DIV_x\left(\frac{\kappa(\theta_n)\nabla_x \theta_n^{\lambda}}{\lambda}\right)-2g'(\theta_n)\theta_n^{\lambda}(\B_n - \mathbb{I}) : \D \bv_n.\\
    & = \left(\frac{\kappa(\theta_n)(1-\lambda)|\nabla_x\theta_n|^2}{\theta_n^2} + \frac{2\nu(\theta_n)|\D \bv_n|^2}{\theta_n} + \frac{\delta(\theta_n)(1-g'(\theta_n)\theta_n)|\B_n - \mathbb{I}|^2}{\theta_n}\right)\theta_n^{\lambda} %,\quad \kappa(\theta) >0.
\end{aligned}
\end{equation}
%Hence, after we multiply the equation above by $\theta^\lambda$ we get
%        \begin{equation}\label{some_equation_for_lemma}
%            \begin{split}
%                &\frac{1}{\lambda}\p_t\theta^\lambda - \p_t[h_\lambda(\theta)]\, f(\B) + \frac{1}{\lambda}u\cdot\nabla_x\theta^\lambda - u\cdot\nabla_x[h_\lambda(\theta)]\,f(\B)\\
%                &\phantom{=}- \DIV_x(\kappa(\theta)\nabla_x(\ln\theta))\theta^\lambda  + g'(\theta)\theta^\lambda\delta(\theta)|\B -\mathbb{I}|^2 - 2g'(\theta)\theta^\lambda(\B - \mathbb{I}) : Du\\
%                &= \left(\kappa(\theta)\frac{|\nabla_x\theta|^2}{\theta^2} + \nu(\theta)\frac{|Du|^2}{\theta} + \delta(\theta)\frac{|\B - \mathbb{I}|^2}{\theta}\right)\theta^\lambda.
%            \end{split}
%        \end{equation}
Finally, we multiply \eqref{eq:equation_for_f_of_B_for_g_theta} by $h_\lambda(\theta_n)$ and subtract the result from \eqref{some_equation_for_lemma} to get 
\begin{equation*}
\begin{aligned}%\label{some_equation_for_lemma}
    &\partial_{t}\left(\frac{\theta_n^{\lambda}}{\lambda}- h_{\lambda}(\theta_n)f(\B_n) \right)  + \DIV_x \left(\left(\frac{\theta_n^{\lambda}}{\lambda} -h_{\lambda}(\theta_n)f(\B_n)\right)\bv_n\right) \\
    &\qquad - \DIV_x\left(\frac{\kappa(\theta_n)\nabla_x \theta_n^{\lambda}}{\lambda}\right)+2\left(h_{\lambda}(\theta_n)-g'(\theta_n)\theta_n^{\lambda}\right)(\B_n - \mathbb{I}) : \D \bv_n\\
    & = \left(\frac{\kappa(\theta_n)(1-\lambda)|\nabla_x\theta_n|^2}{\theta_n^2} + \frac{2\nu(\theta_n)|\D \bv_n|^2}{\theta_n}\right)\theta_n^{\lambda} + \delta(\theta_n)(h_{\lambda}(\theta_n)+\theta_n^{\lambda-1}-g'(\theta_n)\theta_n^{\lambda})|\B_n - \mathbb{I}|^2%,\quad \kappa(\theta) >0.
\end{aligned}
\end{equation*}
and we see that \eqref{lem:renorm} directly follows.
\end{proof}

\subsection{Uniform estimates}
We proceed and show some basic bounds for our sequence. Multiplying $\eqref{main_sys_for_g_theta}_1$ by $\bv_n$ and adding the result  to $\eqref{main_sys_for_g_theta}_3$, integrating the result over $\Omega$, using the boundary conditions for $\bv_n$ and $\theta_n$ in \eqref{eq:boundaryconditions_for_g_theta} and the fact that $\DIV_x \bv_n=0$, we deduce
\begin{equation}\label{Energy_n}
\partial_t \int_{\Omega} \frac{|\bv_n|^2}{2} + e_n \diff x =0.
\end{equation} 
First, the internal energy can be estimated as
$$
e_n=\theta_n + (g(\theta_n)-\theta_n g'(\theta_n))f(\B_n)\ge \theta_n +g(0)f(\B_n)\ge \theta_n + C_1f(\B_n),
$$
where we used the concavity of $g$ and the assumption~\eqref{function:g_bounds_linfty}. Since $f$ is nonnegative function and $\theta_n>0$, we see that also $e_n>0$ and using the convergence properties of the initial conditions~\eqref{conv:init}, we have
\begin{equation}
\begin{aligned}\label{u_n_and_theta_n_Linfty_bounds}
      \|\bv_n\|^2_{L^\infty_t L^2_x} + \|\theta_n\|_{L^\infty_t L^1_x} + C_1\|f(\B_n)\|_{L^\infty_t L^1_x}  &\leq \|\bv_n\|^2_{L^\infty_t L^2_x} + \|e_n\|_{L^\infty_t L^1_x} \\
      &= \|\bv^n_0\|^2_{ L^2_x} + \|e_0^n\|_{ L^1_x}\le C.
\end{aligned}
\end{equation}
We continue, by deducing the estimates coming from the entropy. We integrate \eqref{eq:entropy_equality_for_g_theta} over $\Omega$, use the boundary conditions and the fact that  $\DIV_x \bv_n=0$, and also using the assumption on the function $\delta$ stated in~\eqref{bounds_delta2}, we get that for any $\tau\in (0,T)$
\begin{align*}
        C_1&\int_0^{\tau}\int_\Omega \frac{|\nabla_x \theta_n|^2}{(\theta_n)^2}+\frac{|\D \bv_n|^2}{\theta_n}+\frac{(1+\theta_n)|\B_n - \mathbb{I}|^2}{\theta_n}\diff x\diff\tau \\
        &\leq \int_0^{\tau}\int_{\Omega}\p_t\eta_n + \DIV_x(\eta_n \bv_n) - \DIV_x(\kappa(\theta_n)\nabla_x(\ln\theta_n))\diff x\diff\tau\\
        &= \left(\int_{\Omega} \ln(\theta_n) - g'(\theta_n)\,f(\B_n)\diff x\right)\Bigg|_0^{\tau}.
\end{align*}
Since $\tau\in (0,T)$ is arbitrary, $g$ is nondecreasing,  $f$ is nonnegative and $\ln x \leq x - 1$, we can use the assumptions~\eqref{function:g_technical_ass_1} and \eqref{bounds_kappa} to deduce %apply the bound~\eqref{u_n_and_theta_n_Linfty_bounds}, and the Assumptions \ref{function:g_bounds_linfty}, \ref{function:g_technical_ass_1} implies
\begin{equation}
\begin{aligned}\label{bound_on_B_n_L2}
       \left\| \frac{\D\bv_n}{\sqrt{\theta_n}}\right\|_{L^2_{t,x}}+ \|\ln \theta_n\|_{L^{\infty}_t L^1_x} + \|\nabla_x \ln \theta_n\|_{L^{2}_{t,x}}+ \left\|\frac{(1+\sqrt{\theta_n})(\B_n - \mathbb{I})}{\sqrt{\theta_n}}\right\|_{L^2_{t,x}} &\le C,%(\|\bv_0^n\|_{ L^2_x}, \|\theta^n_0\|_{L^1_x}, \|\ln \theta_0^n\|_{ L^1_x}, \|f(\B_0^n)\|_{ L^1_x})\\
        %&\le C.
\end{aligned}
\end{equation}
where we employed also the uniform estimate \eqref{u_n_and_theta_n_Linfty_bounds}.
In particular, we deduce from the above estimates and the fact that $\B_n = \F_n\, \F_n^T$ that 
\begin{align}
         \|\B_n\|_{L^2_{t,x}}&=\|\F_n\,\F_n^T\|_{L^2_{t,x}} \leq C,\label{bounds_on_F_n_F_n_transposed_L2}\\
         \|\F_n\|_{L^4_{t,x}} &\leq C.\label{bounds_on_F_n_L4}
\end{align}
We continue with uniform estimates for $\bv_n$. We take the scalar product of $\eqref{main_sys_for_g_theta}_1$ with $\bv_n$ and integrate the result over $\Omega$ to get for arbitrary $t\in (0,T)$
\begin{equation}
\begin{aligned}\label{kinetic:energy}
        &\frac{1}{2}\int_\Omega |\bv_n(t)|^2\diff x + \int_0^t\int_{\Omega}2\nu(\theta_n)|\D \bv_n|^2\diff x\diff \tau \\
        &\qquad = -\int_0^t\int_\Omega g(\theta_n)\B_n : \D \bv_n\diff x\diff \tau + \frac{1}{2}\int_{\Omega}|\bv_0^n|^2\diff x.
\end{aligned}
\end{equation}
Employing the Young inequality, we get
\begin{align*}
        \int_0^t\int_{\Omega}\nu(\theta_n)|\D \bv_n|^2\diff x\diff \tau \le \int_0^t\int_\Omega \frac{(g(\theta_n))^2}{\nu(\theta_n)}|\B_n|^2 \diff x\diff \tau + \int_{\Omega}|\bv_0^n|^2\diff x.
\end{align*} 
Hence, we may use the uniform estimate~\eqref{bounds_on_F_n_F_n_transposed_L2}, the assumption on the parameters \eqref{bouds_nu}, \eqref{function:g_bounds_linfty} and the assumptions on the initial data~\eqref{conv:init} and deduce that 
%\begin{align*}
%        &\int_0^t\int_{\Omega}2\nu(\theta_n)|D u_n|^2\diff x\diff t \leq \frac{\|g\|_\infty}{8c_\nu}\int_0^t\int_\Omega|\F_n\,\F_n^T|^2\diff x\diff \tau + 2c_\nu\int_0^t\int_\Omega |D u_n|^2\diff x\diff\tau + \frac{1}{2}\|u_0\|^2_2\\
%        &\phantom{=}\leq 2c_\nu\int_0^t\int_\Omega |D u_n|^2\diff x\diff\tau + C(\|u_0\|_{ L^2_x}, \|\theta_0\|_{L^1_x}, \|\ln(\theta_0)\|_{ L^1_x}, \|f(\B_0)\|_{ L^1_x}).
%\end{align*}
%Thus
\begin{align}\label{bounds_on_symetric_gradient_u_n_L2}
        \|\D \bv_n\|_{L^2_{t,x}} \leq C(\|\bv_0^n\|_{ L^2_x}, \|\theta_0^n\|_{L^1_x}, \|\ln \theta_0^n \|_{ L^1_x}, \|f(\B_0^n)\|_{ L^1_x}) \le C.,
\end{align}
The above estimate, the assumption \eqref{bounds_kappa} and the Korn inequality imply that 
\begin{align}
        \|\sqrt{\nu(\theta_n)}\D \bv_n\|_{L^2_{t,x}} &\leq C,\label{bounds_on_symmetric_gradient_u_n_nu_L2}\\
        \|\nabla_x \bv_n\|_{L^2_{t,x}} &\leq C. \label{bounds_on_gradient_u_n_nu_L2}
\end{align}
The Sobolev embedding, the interpolation theorem and the uniform bound~\eqref{u_n_and_theta_n_Linfty_bounds} further lead to    
\begin{align}
        \|\bv_n\|_{L^4_{t,x}} &\leq C,\label{bounds_on_u_n_nu_L4}\\
        \|\bv_n\|_{L^2_t L^b_x} &\leq C(b) \quad \textrm{ for all } b < +\infty.\label{bounds_on_u_n_nu_L2_Lalmostinfty}
\end{align}
To finish the uniform bounds for the velocity, we consider the estimates on $\partial_t \bv_n$. It follows from  \eqref{main_sys_for_g_theta} that  
\begin{equation}
\begin{aligned}\label{bound_on_partial_t_u_n}
        \|\p_t \bv_n\|_{L^2_t W^{-1,2}_{0, \DIV}} \le \left\| \bv_n \otimes \bv_n - 2\nu(\theta_n)\D \bv_n -2g(\theta_n)\B_n\right\|_{L^2_{t,x}} \le C,
\end{aligned}
\end{equation}    
where for the second inequality, we used the assumptions \eqref{bouds_nu} and \eqref{function:g_bounds_linfty}, and the uniform estimates \eqref{bounds_on_F_n_F_n_transposed_L2}, \eqref{bounds_on_gradient_u_n_nu_L2} and  \eqref{bounds_on_u_n_nu_L4}.   
%    
%let us also note that the bounds \eqref{bounds_on_symmetric_gradient_u_n_nu_L2}, and \eqref{bounds_on_F_n_L4}, as well as the equation %$\eqref{main_sys_for_g_theta}_1$ we may deduce that    
    
We continue with improving the estimates. We start with $\F_n$. Taking the scalar product of~\eqref{main_sys_for_g_theta}$_2$ with~$\F_n$ and integrating the result over $(0,t)\times \Omega$, we deduce the identity 
\begin{equation}
\begin{aligned}\label{some_random_inequality2}
        &\|\F_n(t)\|_2^2 + \int_0^t\int_\Omega\delta(\theta_n)|\F_n\,\F_n^T|^2\diff x\diff \tau \\
        &\qquad = \int_0^t\int_\Omega 2\B_n : \D \bv_n\diff x\diff \tau + \int_0^t\int_\Omega\delta(\theta_n)|\F_n|^2\diff x\diff\tau + \|\F_0^n\|_2^2,
\end{aligned}
\end{equation}
which by using the H\"{o}lder inequality, Young inequality and a matrix inequality $|\F|^4 \leq 2|\F\,\F^T|^2$ implies
    \begin{multline*}
        \|\F_n(t)\|_2^2 + \frac{1}{2}\int_0^t\int_\Omega\delta(\theta_n)|\F_n|^4\diff x\diff\tau \\
        \leq 2\|\F_n\,\F_n^T\|_2\|\D \bv_n\|_2 + \int_0^t\int_\Omega \delta(\theta_n)\diff x\diff\tau + \frac{1}{4}\int_0^t\int_\Omega\delta(\theta_n)|\F_n|^4\diff x\diff\tau + \|\F_0\|_2^2.
    \end{multline*}
Hence, by \eqref{bounds_on_F_n_F_n_transposed_L2}, \eqref{bounds_on_symetric_gradient_u_n_L2}, the assumptions postulated for  $\delta$ in ~\eqref{bounds_delta2}, the bounds \eqref{u_n_and_theta_n_Linfty_bounds} and the fact that $t\in (0,T)$ is arbitrary, we have
\begin{align}\label{bounds_on_F_n_Linfty}
        \|\F_n\|_{L^\infty_t L^2_x}^2 + \int_0^T\int_\Omega\delta(\theta_n)|\F_n|^4\diff x\diff t \leq C.
\end{align}
In particular, coming back with the obtained bound to the inequality \eqref{some_random_inequality2} gives us
\begin{align}\label{bounds_on_F_n_with_delta}
        \int_0^T \int_\Omega\delta(\theta_n)|\F_n\,\F_n^T|^2\diff x\diff t \leq C.
\end{align}

To derive some compactness for $\theta_n$ we use Lemma~\ref{lem:renormalized_equation_for_theta}. We integrate \eqref{lem:renorm} over $\Omega$ and $(0,t)$, neglect terms having the proper sign to deduce for any $\lambda \in (0,1)$ that
\begin{equation}\label{prpr}
    \begin{split}
        &(1-\lambda)\int_0^t\int_\Omega \kappa(\theta_n)\frac{|\nabla_x\theta_n|^2}{\theta_n^{2-\lambda}}\diff x\diff \tau \\
        &\quad \leq \frac{1}{\lambda}\int_{\Omega}\theta_n^\lambda(t)\diff x\diff\tau - \int_\Omega h_\lambda(\theta_n(t))\,f(\B_n(t))\diff x\\
        &\qquad + \int_0^t\int_\Omega \left(\delta(\theta_n)|\B_n - \mathbb{I}|^2-2(\B_n - \mathbb{I}):\D \bv_n\right)\left(g'(\theta_n)\theta_n^\lambda - h_\lambda(\theta_n)\right)\diff x\diff \tau.
    \end{split}
\end{equation}
Using the assumption the assumption \eqref{function:g'_bounds_linfty} and the bound on $h_{\lambda}$ in \eqref{function:g_technical_ass_2}, we have
$$
|g'(\theta_n)\theta_n^\lambda| + |h_{\lambda}(\theta_n)| \le C.
$$
Thus, using this bound in \eqref{prpr}, combining it with the H\"{o}lder inequality and the assumption on $\kappa$ in \eqref{bounds_kappa} and the already derived uniform estimates \eqref{u_n_and_theta_n_Linfty_bounds}, \eqref{bounds_on_F_n_with_delta}, \eqref{bounds_on_symetric_gradient_u_n_L2}, \eqref{bound_on_B_n_L2}, we get
\begin{align}\label{bound_grad_theta_n_with_lambda}
        \int_0^T\int_{\Omega}\frac{|\nabla_x\theta_{n}|^2}{\theta_n^{\lambda}}\diff x\diff t \leq C(\lambda),\quad\text{ for all }\lambda\in (1, 2).
\end{align}
With help of this estimate, we continue with further estimates for $\theta_n$. We recall the interpolation inequality
\begin{equation}\label{interpolation_inequality_g}
\|v\|^a_{L^a(\Omega)}\leq C\left(\|v\|^a_{L^{\frac{2}{2-\lambda}}(\Omega)}+\|v\|^{a-2}_{L^{\frac{2}{2-\lambda}}
(\Omega)}\|\nabla_x v\|^{2}_{L^{2}(\Omega)} \right),
\end{equation}
which is valid for all $\lambda \in [1,2)$  and an $a$ defined as
$$
a:=2 + \frac{2}{2-\lambda}.
$$
Hence, with using the interpolation~\eqref{interpolation_inequality_g} and the uniform bounds \eqref{u_n_and_theta_n_Linfty_bounds} and~\eqref{bound_grad_theta_n_with_lambda}, we see that for any $\lambda\in (1,2)$
\begin{equation}\label{almost_uniform_bound_temp_g}
\begin{split}
    \int_0^T\int_\Omega \theta_n^{3 - \lambda}\diff x\diff t &= \int_0^T\int_\Omega \left(\theta_n^{\frac{2 - \lambda}{2}}\right)^a\diff x\diff t \le C \int_0^T\|\theta_n^{\frac{2 - \lambda}{2}}\|_{\frac{2}{2 - \lambda}}^a + \|\theta^{\frac{2 - \lambda}{2}}_n\|_{\frac{2}{2 - \lambda}}^{a - 2}\|\nabla_x(\theta_n^{\frac{2-\lambda}{2}})\|_2^2\diff t\\
    &= C\int_0^T\left(\int_{\Omega}\theta_n\diff x\right)^{3 - \lambda}\diff t + C(\lambda)\int_0^T\left(\int_\Omega\theta_n\diff x\right)\,\left(\int_{\Omega}\frac{|\nabla_x\theta_n|^2}{\theta_n^\lambda}\diff x\right)\diff t\\
    &\leq  C(\lambda).
\end{split}
\end{equation}
We also show the inhomogeneous estimate for $\theta_n$. Recall another interpolation inequality
\begin{equation}\label{inter23}
\|v\|_{L^p(\Omega)} \le C(p,\Omega)\left(\|v\|_{L^2(\Omega)} + \|v\|^{\frac{2}{p}}_{L^2(\Omega)}\|\nabla_x v\|_{L^2(\Omega)}^{\frac{p-2}{p}}\right),
\end{equation}
which is valid for all $p\in (2,\infty)$. Then,  for arbitrary~$\lambda\in (1,2)$, we set~$v:=(\theta_n)^{\frac{2-\lambda}{2}}$ in~\eqref{inter23} and by using the H\"{o}lder inequality and the uniform bounds~\eqref{u_n_and_theta_n_Linfty_bounds} and~\eqref{bound_grad_theta_n_with_lambda}, we deduce 
\begin{equation}\label{inhomest}
\begin{aligned}
\|(\theta_n)^{\frac{2-\lambda}{2}}\|^{\frac{2p}{p-2}}_{L_t^{\frac{2p}{p-2}} L_x^p}&=\int_0^T \|(\theta_n)^{\frac{2-\lambda}{2}}\|^{\frac{2p}{p-2}}_{L_x^p}\diff t \\
&\le C(p)\int_0^T \|(\theta_n)^{\frac{2-\lambda}{2}}\|^{\frac{2p}{p-2}}_{L^2_x} + \|(\theta_n)^{\frac{2-\lambda}{2}}\|^{\frac{4}{p-2}}_{L^2_x}\|\nabla_x (\theta_n)^{\frac{2-\lambda}{2}}\|_{L^2_x}^{2} \diff t\\
&\le C(p,\lambda) \left(\|\theta_n\|^{\frac{p(2-\lambda)}{p-2}}_{L^{\infty}_t L^1_x} + \|\theta_n\|^{\frac{2(2-\lambda)}{p-2}}_{L^{\infty}_t L^2_x} \int_0^T \int_{\Omega} \frac{|\nabla_x \theta_n|^2}{(\theta_n)^{\lambda}}\diff x \diff t \right)\le C(p,\lambda).
\end{aligned}
\end{equation}
Next, with the use of Young's inequality, \eqref{almost_uniform_bound_temp_g} and \eqref{bound_grad_theta_n_with_lambda} we may infer
\begin{equation}\label{almost_uniform_bound_grad_temp_g}
\begin{split}
\int_0^T\int_\Omega|\nabla_x \theta_n|^{2 - \frac{2}{3}\lambda}\diff x\diff t &= \int_0^T\int_\Omega\left(\frac{|\nabla_x \theta_n|^{2}}{\theta_n^\lambda}\right)^{\frac{2 - \frac{2}{3}\lambda}{2}}\theta_n^{\frac{(2 - \frac{2}{3}\lambda)\lambda}{2}}\diff x\diff t\\
&\leq \int_0^T\int_\Omega\frac{|\nabla_x \theta_n|^{2}}{\theta_n^\lambda}+\theta_n^{\frac{(2 - \frac{2}{3}\lambda)\lambda}{2 - (2 - \frac{2}{3}\lambda)}}\diff x\diff t\\
&= \int_0^T\int_\Omega\frac{|\nabla_x \theta_n|^{2}}{\theta_n^\lambda}+\theta_n^{3 - \lambda}\diff x\diff t\le C(\lambda),
\end{split}
\end{equation}
for any $\lambda\in(1,2)$. To summarize, it follows from \eqref{inhomest} (where $p>2$ and $\lambda\in (1,2)$ are arbitrary), from
\eqref{almost_uniform_bound_temp_g} and \eqref{almost_uniform_bound_grad_temp_g} give the bounds
    \begin{align}
    \|\theta_n\|_{L_t^{q} L_x^{p}}&\le C(p,q)  &&\textrm{for all } p\in [1,\infty) \textrm{ and } q\in \left[1,\frac{p}{p-1}\right),\label{final_bounds_on_theta_n2}\\
         \|\theta_n\|_{L^{q}_{t,x}} &\leq C(q) &&\textrm{for all } q\in [1,2), \label{final_bounds_on_theta_n}\\
        \|\nabla_x\theta_n\|_{L^{q}_{t,x}} &\leq C(q) &&\textrm{for all } q\in \left[1,\frac43\right).\label{final_bounds_on_grad_theta_n}
    \end{align}
Next goal is to establish the integrability of the sequence $f(\B_n)$, which appears in the internal energy and entropy. First, due to the matrix inequality $|\tr \F | \leq \sqrt{2}|\F|$ we have 
\begin{equation}
    \begin{aligned}\label{bounds_on_trace_B_n}
        &\|\tr \B_n\|_{L^2_{t,x}} \leq \sqrt{2}\|\B_n\|_{L^2_{t,x}} \leq C,\\
        &\|\tr \B_n \|_{L^\infty_t L^1_x} \leq \sqrt{2}\|\B_n\|_{L^\infty_t L^1_x} \leq C.
    \end{aligned}
\end{equation}
The above estimate gives us the information on the one part of $f(\B_n)$. To get also the estimate on~$\ln \det \B_n$, we derive the identity for this quantity. To do so,  we take the scalar product of~\eqref{equation_for_B_n_for_g_theta} with $\B_n^{-1}$ (note that $\B_n$ is assumed to be positively definite) and get (using the fact that $\DIV_x \bv_n =0$)
\begin{align}\label{equation_for_ln_det_B_n}
        \p_t\ln\det\B_n + \DIV_x (\bv_n \ln\det\B_n) + \delta(\theta_n) \tr (\B_n - \mathbb{I}) = 0.
\end{align}
To deduce the information for the last term on the right hand side, we first recall the assumption on $\delta$ in~\eqref{bounds_delta2} and then it directly follows from \eqref{u_n_and_theta_n_Linfty_bounds} and  \eqref{final_bounds_on_theta_n} that 
%    \iffalse
%    \begin{align*}
%        \|\delta(\theta_n)\|_{L^{\frac{1}{a}(2 - \lambda)}} \leq C(T, \lambda, \|u_0\|_2, \|\F_0\|_2, \|\theta_0\|_1, \|f(\B_0)\|_1),
%    \end{align*}
%    for $\lambda \in (0, 1)$, which makes us arrive at
%    \fi
\begin{align}\label{bounds_for_delta_theta_n}
        \|\delta(\theta_n)\|_{L_t^{r} L_x^{p}}  + \|\delta(\theta_n)\|_{L^{\infty}_{t} L^1_{x}}\leq C(q,p,r)
    \end{align}
for all $q\in [1,2)$, all $p\in [1,\infty)$ and all $r\in[1,\frac{p}{p-1})$. With the help of this identity, we derive two estimates, the first one homogeneous with respect to space and time variables and the second one to get the optimal control in~\eqref{equation_for_ln_det_B_n}. 
Thus, by the H\"{o}lder inequality, the bounds \eqref{bounds_for_delta_theta_n}, \eqref{bounds_on_F_n_with_delta}, and the matrix inequality $|\tr \F| \leq \sqrt{2}|\F|$  we obtain for all $\varepsilon\in (0,1)$
\begin{equation}\label{some_random_inequality3}
    \begin{split}
        &\int_0^T\int_\Omega |\delta(\theta_n)\tr (\B_n - \mathbb{I})|^{\frac{4- 2\varepsilon}{3 - \varepsilon}}\diff x\diff t =\int_0^T\int_\Omega \left(\delta(\theta_n)\right)^{\frac{2- \varepsilon}{3 - \varepsilon}} \left(\delta(\theta_n)|\tr(\B_n - \mathbb{I})|^2\right)^{\frac{2- \varepsilon}{3 - \varepsilon}}\diff x\diff t\\
        &\quad \leq \left(\int_0^T \int_{\Omega} (\delta(\theta_n))^{2-\varepsilon} \diff x \diff t\right)^{\frac{1}{3-\varepsilon}}\left(\int_0^T\int_\Omega \delta(\theta_n)|\mathrm{tr}(\B_n - \mathbb{I})|^2\diff x\diff t\right)^{\frac{2-\varepsilon}{3 - \varepsilon}}\leq C(\varepsilon),
    \end{split}
\end{equation}
and similarly,  for any $q\in [1,2)$ we can use the H\"{o}lder inequality and the uniform bound~\eqref{bounds_for_delta_theta_n} to deduce
\begin{equation}\label{some_random_inequality32}
    \begin{split}
        &\int_0^T\left(\int_\Omega |\delta(\theta_n)\tr (\B_n - \mathbb{I})|^{q}\diff x\right)^{\frac{1}{q}}\diff t =\int_0^T\left(\int_\Omega \left(\delta(\theta_n)\right)^{\frac{q}{2}}  \left(\delta(\theta_n)|\tr (\B_n - \mathbb{I})|^2\right)^{\frac{q}{2}}\diff x\right)^{\frac{1}{q}}\diff t\\
        &\quad \leq \int_0^T \left(\int_\Omega \left(\delta(\theta_n)\right)^{\frac{q}{2-q}} \diff x \right)^{\frac{2-q}{2q}} \left( \int_{\Omega}\delta(\theta_n)|\tr (\B_n - \mathbb{I})|^2\diff x \right)^{\frac{1}{2}}\diff t\\
        &\quad\le  \|\delta(\theta_n)\|^{\frac12}_{L_t^{1} L_x^{\frac{q}{2-q}}} \left( \int_0^T\int_{\Omega}\delta(\theta_n)|\tr (\B_n - \mathbb{I})|^2\diff x \diff t\right)^{\frac{1}{2}} \le C(q).
    \end{split}
\end{equation}
In addition, by the very similar manipulation, we deduce that for any $q\in [1,\frac87)$
\begin{equation}\label{some_random_inequality33}
    \begin{split}
        &\int_0^T\int_\Omega |\delta(\theta_n) \F_n \F_n^T \F_n|^q +|\delta(\theta_n)\F_n |^q \diff x \diff t \leq C\int_0^T \int_\Omega 1+  (\delta(\theta_n))^{\frac{q}{4}}\left(\delta(\theta_n) |\F_n|^4\right)^{\frac{3q}{4}}\diff x \diff t\\
        &\quad \leq C + C\left(\int_0^T \int_\Omega (\delta(\theta_n))^{\frac{q}{4-3q}}\right)^{\frac{4}{4-3q}}  \left(\int_0^T \int_\Omega \delta(\theta_n) |\F_n|^4\diff x \diff t\right)^{\frac{3q}{4}}\le C(q),
    \end{split}
\end{equation}
where we used the estimates \eqref{bounds_on_F_n_Linfty} and \eqref{bounds_for_delta_theta_n}. Note that condition $q<\frac87$ is equivalent to condition $\frac{q}{4-3q}<2$, which is required in \eqref{bounds_for_delta_theta_n}.
For the entropy estimates,  we  multiply \eqref{equation_for_ln_det_B_n} by $q|\ln\det\B_n|^{q-2} \ln\det\B_n$,  integrate over $\Omega$ and use the H\"{o}lder inequality to get 
$$
\begin{aligned}
\frac{\diff}{\diff t} \int_{\Omega} |\ln\det\B_n|^{q}\diff x &= -q\int_{\Omega}\delta(\theta_n) \tr (\B_n - \mathbb{I}) |\ln\det\B_n|^{q-2} \ln\det\B_n\diff x\\
&\le \left(\int_{\Omega}\left|\delta(\theta_n) \tr (\B_n - \mathbb{I})\right|^q \diff x\right)^{\frac{1}{q}} \|\ln\det\B_n\|^{q-1}_{L^q_x}.
\end{aligned}
$$
The Gr\"{o}nwall inequality and the uniform estimate~\eqref{some_random_inequality32} then implies 
% Due to the H\"{o}lder inequality, \eqref{some_random_inequality3} and the bounds \eqref{u_n_and_theta_n_Linfty_bounds}, \eqref{bounds_on_trace_B_n} the above implies
%    \begin{equation}
%        \begin{split}
%            \|\ln\det\B_n&\|_{L^\infty_t L^{\frac{5-3\varepsilon}{4-2\varepsilon}}_x} \lesssim \|\ln\det\B_0\|_{\infty} + %\|\ln\det\B_n\|_{1}\left(\int_0^T\int_\Omega |\delta(\theta_n)\mathrm{tr}(\B_n - \mathbb{I})|^{\frac{4-2\varepsilon}{3-\varepsilon}}\diff x\diff %\tau\right)^{\frac{3-\varepsilon}{4-2\varepsilon}}\\
%            &\leq   \|\ln\det\B_0\|_{\infty} + (\|f(\B_n)\|_{1} + \|\mathrm{tr}(\B_n) - 2\|_{1})\left(\int_0^T\int_\Omega |\delta(\theta_n)\mathrm{tr}(\B_n - %\mathbb{I})|^{\frac{4-2\varepsilon}{3-\varepsilon}}\diff x\diff \tau\right)^{\frac{3-\varepsilon}{4-2\varepsilon}}\\
%            &\leq C(\varepsilon, \|u_0\|_{ L^2_x}, \|\theta_0\|_{ L^1_x}, \|\ln(\theta_0)\|_{L^1_x}, \|f(\B_0)\|_{ L^1_x}, \|\F_0\|_{L^2_x}, %\|\ln\det\B_0\|_{L^\infty_x}).
%        \end{split}
%    \end{equation}
%    In fact, with the obtained bound we may apply the same reasoning and multiply the equation \eqref{equation_for_ln_det_B_n} by $|\ln\det\B_n|^{\frac{(5-3\varepsilon)(1-\varepsilon)}{(4 - 2\varepsilon)^2} - 1}\ln\det\B_n$. In the end, by a recursive argument
%    $$
%    \|\ln\det\B_n\|_{L^\infty_t L^\lambda_x}\leq C(\lambda, \|u_0\|_{ L^2_x}, \|\theta_0\|_{ L^1_x}, \|\ln(\theta_0)\|_{L^1_x}, \|f(\B_0)\|_{ L^1_x}, %\|\F_0\|_{L^2_x}, \|\ln\det\B_0\|_{L^\infty_x}),
%    $$
%    for any $\lambda < \frac{4-2\varepsilon}{3-\varepsilon}$, or easier
\begin{align}\label{bound_on_ln_det_B_n}
        \|\ln\det\B_n\|_{L^\infty_t L^{q}_x}\leq C(q, \|\ln\det\B_0^n\|_{L^q_x}),
\end{align}
for any $q\in [1,2)$. Thus, with the use of \eqref{bounds_on_trace_B_n}, \eqref{bound_on_ln_det_B_n} and due to bounds on initial conditions $\ln \det \F_n$ in~\eqref{conv:init}, we obtain
\begin{align}\label{final_bounds_f_of_B_n}
        \|f(\B_n)\|_{L^{2-\varepsilon}_{t,x}} \leq C(\varepsilon) \textrm{ for any } \varepsilon\in (0,1).
\end{align}

We finish this part by deriving the uniform estimates for all terms that appear in the entropy identity~\eqref{eq:entropy_equality_for_g_theta}. Recall the definition of the entropy $\eta_n$ in~\eqref{eq:entropyn} 
\begin{equation*}%\label{eq:entropyn}
\eta_n= \ln \theta_n -g'(\theta_n) f(\B_n).
\end{equation*}
It follows from the uniform bound~\eqref{bound_on_B_n_L2} and Poincar\'{e}--Wirtinger inequality that
%
%It follows from~\eqref{eq:entropy_equality_for_g_theta} after integration over $(0,T)\times \Omega$ that
%    \begin{align}\label{bounds_on_grad_log_theta_n}
%    \int_0^T\int_\Omega |\nabla_x\ln(\theta_n)|^2\diff x\diff t = \int_0^T\int_\Omega \frac{|\nabla_x\theta_n|^2}{\theta_n^2}\diff x\diff t \leq  C(\|u_0\|_{ L^2_x}, \|\theta_0\|_{L^1_x}, \|\ln(\theta_0)\|_{ L^1_x}, \|f(\B_0)\|_{ L^1_x}).
%    \end{align}
%    and
%    \begin{align*}
%        \int_\Omega \ln(\theta_n)\diff x \geq \int_\Omega \ln(\theta_0)\diff x - C(\|g'(\theta_0)f(\B_0)\|_1) \geq -C(\|\ln(\theta_0)\|_{L^1_x}, \|f(\B_0)\|_{L^1_x}).
%    \end{align*}
%    By \eqref{u_n_and_theta_n_Linfty_bounds} we may estimate
%    \begin{align*}
%        \int_{\{x: \theta_n(t,x) > 1\}}\ln(\theta_n(t))\diff x\leq \int_{\{x:\theta_n(t) > 1\}}\theta_n(t) - 1\diff x \leq C(\|u_0\|_{L^2_x}, \|\theta_0\|_{L^1_x}, \|f(\B_0)\|_{L^1_x}).
%    \end{align*}
%    Hence,
%    \begin{align}\label{bounds_ln_theta_Linfty_L1}
%        \|\ln(\theta_n)\|_{L^\infty_t L^1_x} \leq C(\|u_0\|_{L^2_x}, \|\theta_0\|_{L^1_x}, \|f(\B_0)\|_{L^1_x}, \|\ln(\theta_0)\|_{L^1_x}),
%    \end{align}
%    
%and we may utilize Poincar\'{e}--Wirtinger inequality as well as \eqref{bounds_on_grad_log_theta_n} and \eqref{bounds_ln_theta_Linfty_L1} to deduce
\begin{align*}%\label{bounds_log_theta_n}
    \|\ln \theta_n\|_{L^{2}_t W^{1,2}_x} \leq  C
\end{align*}
and it follows from the uniform estimate \eqref{final_bounds_f_of_B_n}, the above inequality and the assumption~\eqref{function:g'_bounds_linfty} that
\begin{align}\label{bounds_entropy_n}
        \|\eta_n\|_{L^{2-\varepsilon}_{t,x}}\leq  C \qquad \textrm{for all }\varepsilon \in (0,1).
\end{align}
Using this inequality, the uniform estimate \eqref{bounds_on_u_n_nu_L4} for $\bv_n$ and the classical H\"{o}lder inequality, we see
\begin{align}\label{bounds_entropy_n2}
        \|\eta_n \, \bv_n\|_{L^{\frac{4}{3}-\varepsilon}_{t,x}} \leq  C \quad \textrm{ for all } \varepsilon \in \left(0,\frac13\right].
\end{align}
Next, defining the flux 
\begin{equation}\label{flux_n}
    \bq_n = \eta_n\,\bv_n - \kappa(\theta_n)\nabla_x(\ln \theta_n),
\end{equation}
it follow from~\eqref{bounds_entropy_n2} and \eqref{bound_on_B_n_L2} that 
%Then, by \eqref{bounds_log_theta_n}, \eqref{bounds_on_u_n_nu_L4}, and H\"{o}lder's inequality
%\begin{align*}
%        \|\ln(\theta_n)\,u_n\|_{L^{\frac{4}{3}}_{t,x}} \leq  C(\|u_0\|_{L^2_x}, \|\theta_0\|_{L^1_x}, \|f(\B_0)\|_{L^1_x}, \|\ln(\theta_0)\|_{L^1_x}).
%\end{align*}
%Similarly, by \eqref{bounds_on_trace_B_n}, \eqref{bounds_on_u_n_nu_L4}, and H\"{o}lder's inequality
%\begin{align*}
%        \|\mathrm{tr}(\B_n)\,u_n\|_{L^{\frac{4}{3}}_{t,x}}\leq C(\|u_0\|_{L^2_x}, \|\theta_0\|_{L^1_x}, \|f(\B_0)\|_{L^1_x}, \|\ln(\theta_0)\|_{L^1_x}),
%\end{align*}
%and by \eqref{bound_on_ln_det_B_n}, \eqref{bounds_on_u_n_nu_L2_Lalmostinfty}, H\"{o}lder's inequality
%\begin{align*}
%        \|\ln\det\B_n\,u_n\|_{L^2_t L^{\frac{5}{4}}_x} \leq C(\|u_0\|_{ L^2_x}, \|\theta_0\|_{ L^1_x}, \|\ln(\theta_0)\|_{L^1_x}, \|f(\B_0)\|_{ L^1_x}, \|\F_0\|_{L^2_x}, \|\ln\det\B_0\|_{L^\infty_x}).
%\end{align*}
%Thus
%    \begin{align}\label{bound_on_u_n_mulitplied_entropy}
%        \|\eta_n\,u_n\|_{L^{\frac{5}{4}}_{t,x}} \leq C(\|u_0\|_{ L^2_x}, \|\theta_0\|_{ L^1_x}, \|\ln(\theta_0)\|_{L^1_x}, \|f(\B_0)\|_{ L^1_x}, \|\F_0\|_{L^2_x}, \|\ln\det\B_0\|_{L^\infty_x}).
%    \end{align}
%    In particular, the bound above, together with \eqref{bounds_on_grad_log_theta_n} implies
\begin{align}\label{flux_n_estimate}
        \|\bq_n\|_{L^{\frac{4}{3}-\varepsilon}_{t,x}} \leq C \qquad \textrm{ for all }\varepsilon\in \left(0, \frac13 \right].
\end{align}

\subsection{Convergence results based on the uniform estimates}
In this section we use the reflexivity of underlying spaces and the uniform bounds to get the weak convergence results for the sequence of solutions. 
First, we focus on the convergence results for $\bv_n$. Using  \eqref{u_n_and_theta_n_Linfty_bounds}, \eqref{bounds_on_gradient_u_n_nu_L2}, \eqref{bound_on_partial_t_u_n} and the Aubin--Lions lemma (see Lemma~\ref{aubin-lions}) we can find a subsequence that we do not relabel and $\bv$ such that
\begin{equation}
\begin{aligned}\label{strong_convergence_of_u_n_C_Lp}
            \bv_n &\overset{*}{\rightharpoonup} \bv &&\text{ weakly* in }L^\infty_t L^2_x,\\
            \bv_n &\rightharpoonup \bv &&\text{ weakly in }L^2_t W^{1,2}_{0,\DIV},\\
            \bv_n &\rightharpoonup \bv &&\text{ weakly in }L^4_{t,x},\\
            \bv_n &\rightarrow \bv &&\text{ strongly in }L^p_{t,x} \textrm{ for all } p\in [1,4),\\
            \partial_t \bv_n  &\rightharpoonup \partial_t \bv &&\text{ weakly in } L^2_t W^{-1,2}_{0,\DIV},
\end{aligned}
\end{equation}
which directly implies~\eqref{cnv_v}$_1$--\eqref{cnv_v}$_3$. Similarly, due to the uniform bounds~\eqref{bounds_on_F_n_Linfty}, we can find a subsequence that we do not relabel and~$\F$ such that
\begin{equation}\label{conv_Fn}
\begin{aligned}
            \F_n &\overset{*}{\rightharpoonup} \F &&\text{ weakly* in }L^\infty_t L^2_x,\\
            \F_n &\rightharpoonup \F &&\text{ weakly in }L^4_{t,x},
\end{aligned}
\end{equation}  
that gives ~\eqref{cnv_F}$_1$--\eqref{cnv_F}$_2$. Next, for the temperature, we use  \eqref{final_bounds_on_theta_n2}--\eqref{final_bounds_on_grad_theta_n} and wee that for a subsequence 
\begin{equation}\label{conv_thetan}
\begin{aligned}
\theta_n &\rightarrow \theta &&\text{ weakly in }L_t^{q} L_x^{p} \textrm{ for all } p\in [1,\infty) \textrm{ and } q\in \left[1,\frac{p}{p-1}\right),\\
\theta_n &\rightarrow \theta &&\text{ weakly in }L_t^{q}W^{1,q}_x  \textrm{ for all } q\in \left[1,\frac{4}{3}\right)
\end{aligned}
\end{equation}
that is \eqref{cnv_theta}. For nonlinear term, where we cannot a priori identify the limit due to the missing compactness of the temperature $\theta_n$ and the extra stress tensor $\F_n$, we simply use the symbol $\overline{a}$ for a weak limit of the sequence $a_n$. Therefore, using \eqref{strong_convergence_of_u_n_C_Lp}--\eqref{conv_thetan}, the assumptions \eqref{bouds_nu}, \eqref{bounds_delta2}, \eqref{function:g_bounds_linfty} and the uniform estimates \eqref{some_random_inequality3}--\eqref{some_random_inequality33} and the fact that $\B_n = \F_n \, \F_n^T$, we deduce  
\begin{equation}\label{nonlinear_n}
\begin{aligned}
            %\F_n &\overset{*}{\rightharpoonup} \F &&\text{ weakly* in }L^\infty_t L^2_x,\\
            %\F_n &\rightharpoonup \F &&\text{ weakly in }L^4_{t,x},\\
            \nabla_x \bv_n\,\F_n &\rightharpoonup \overline{\nabla_x \bv \,\F} &&\text{ weakly in }L^{\frac{4}{3}}_{t,x},\\
            \F_n\,\F_n^T &\rightharpoonup \overline{\F\,\F^T} &&\text{ weakly in }L^{2}_{t,x},\\
            |\F_n|^2 &\rightharpoonup \overline{|\F|^2} &&\text{ weakly in }L^{2}_{t,x},\\
            \F_n\,\F_n^T\,\F_n &\rightharpoonup \overline{\F\,\F^T\,\F} &&\text{ weakly in }L^{\frac{4}{3}}_{t,x},\\
            %|\F_n\,\F_n^T|^2 &\overset{*}{\rightharpoonup} \overline{|\F\,\F^T|^2} &&\text{ weakly* in }\mathcal{M}([0, T]\times\overline{\Omega}),\\
            %\nabla_x u_n\,\F_n\,\F_n^T &\overset{*}{\rightharpoonup} \overline{\nabla_x u\,\F\,\F^T} &&\text{ weakly* in }\mathcal{M}([0, T]\times\overline{\Omega}),\\
            %|D u_n|^2 &\overset{*}{\rightharpoonup} \overline{|D u|^2} &&\text{ weakly* in }\mathcal{M}([0, T]\times\overline{\Omega}),\\
            g(\theta_n)\F_n\,\F_n^T &\rightharpoonup \overline{g(\theta)\F\,\F^T} &&\text{ weakly in }L^{2}_{t,x},\\
            %\delta(\theta_n)|\F_n|^2 &\rightharpoonup \overline{\delta(\theta)|\F|^2} &&\text{ weakly in }L^{\frac{4}{3}-\varepsilon}_{t,x},\\
            \delta(\theta_n)\F_n\,\F_n^T\,\F_n &\rightharpoonup \overline{\delta(\theta)\F\,\F^T\,\F} &&\text{ weakly in }L^{q}_{t,x} \textrm{ for all }q\in \left[1, \frac87 \right),\\
            \delta(\theta_n)\F_n &\rightharpoonup \overline{\delta(\theta)\F} &&\text{ weakly in }L^{q}_{t,x} \textrm{ for all }q\in \left[1, \frac43 \right),\\
            %\delta(\theta_n)|\F_n\,\F_n^T|^2 &\overset{*}{\rightharpoonup} \overline{\delta(\theta)|\F\,\F^T|^2} &&\text{ weakly* in }\mathcal{M}([0, T]\times\overline{\Omega}),\\
            \nu(\theta_n)\D \bv_n &\rightharpoonup \overline{\nu(\theta)\D \bv} &&\text{ weakly in }L^{2}_{t,x}.
            %\theta_n &\rightarrow \theta &&\text{ strongly in }L^{2-\varepsilon}_{t,x},\\
            %\delta(\theta_n) &\rightarrow \delta(\theta) &&\text{ strongly in }L^{2-\varepsilon}_{t,x},\\
            %g(\theta_n) &\overset{*}{\rightharpoonup} g(\theta) &&\text{ weakly* in }L^{\infty}_{t,x},\\
            %\nu(\theta_n) &\overset{*}{\rightharpoonup} \nu(\theta) &&\text{ weakly* in }L^{\infty}_{t,x},
\end{aligned}
\end{equation}
Hence, we may let $n\to \infty$ in \eqref{main_sys_for_g_theta}$_1$--\eqref{main_sys_for_g_theta}$_2$ and using the convergence results for initial data \eqref{conv:init} we can obtain by very classical procedure that
\begin{equation}
\begin{aligned}\label{almost_weak_formulation_u}
        &\int_0^T\int_\Omega -\bv \cdot\p_t\bphi - \bv\otimes \bv : \nabla_x\bphi + \overline{\nu(\theta)\D\bv} :\nabla_x\bphi + \overline{g(\theta)\F\,\F^T}:\nabla_x\bphi\diff x\diff t \\
        &\qquad = \int_\Omega \bv_0(x)\cdot\bphi(0, x)\diff x
\end{aligned}
\end{equation}
for any $\vect{\varphi}\in \mathcal{C}^1_c([0, T)\times\Omega; \R^2)$ with $\DIV_x\bphi = 0$,
\begin{equation}
\begin{aligned}\label{almost_weak_formulation_F}
        &\int_0^T\int_\Omega -\F:\p_t\bG - \F\otimes \bv \because \nabla_x \bG - \overline{\nabla_x \bv\,\F} : \bG + \frac{1}{2}\left(\overline{\delta(\theta) \F\,\F^T\,\F} - \overline{\delta(\theta)\F}\right): \bG\diff x\diff t \\
        &\qquad = \int_\Omega \F_0(x) : \bG(0, x)\diff x
    \end{aligned}
\end{equation}
for any $\bG\in \mathcal{C}^1_c([0, T)\times\Omega; \R^{2\times 2})$. Note that \eqref{almost_weak_formulation_u}--\eqref{almost_weak_formulation_F} implies \eqref{weak_formulation_u_g_theta}--
\eqref{weak_formulation_F_g_theta} provided that we show the point-wise convergence results
\begin{equation}\label{pointn}
\begin{aligned}
\theta_n &\to \theta &&\textrm{almost everywhere in } (0,T)\times \Omega,\\
\F_n &\to \F &&\textrm{almost everywhere in } (0,T)\times \Omega.
\end{aligned}
\end{equation}
In the remaining parts of this section we focus mainly on the proof of \eqref{pointn}.
%and to show it we need to have proper description of the weak limits of nonlinear terms. 
%
%
%    \begin{align}\label{almost_weak_formulation_u}
%        \int_0^T\int_\Omega -u\cdot\p_t\phi - u\otimes u : \nabla_x\phi + \nu(\theta)Du:\nabla_x\phi + g(\theta)\overline{\F\,\F^T}:\nabla_x\phi\diff x\diff t = \int_\Omega u_0(x)\cdot\phi(0, x)\diff x,
%    \end{align}
%    for any $\phi\in C^1_c([0, T)\times\Omega;\R^2)$ with $\DIV_x\phi = 0$,
%    \begin{align}\label{almost_weak_formulation_F}
%        \int_0^T\int_\Omega -\F : \p_t\phi - \F\otimes u \because \nabla_x\phi - \overline{\nabla_x u\,\F} : u + \frac{1}{2}\delta(\theta)(\overline{\F\,\F^T\,\F} - \F):\phi\diff x\diff t = \int_\Omega \F_0 : \phi(0, x)\diff x,
%    \end{align}
%    for any $\phi\in C^1_c([0, T)\times\Omega;\R^{2\times 2})$.
%
%
%Due to Banach--Alaoglu's theorem, Lebegue's dominated convergence theorem, Aubin-Lions' lemma \ref{aubin-lions} and \eqref{u_n_and_theta_n_Linfty_bounds}, \eqref{bound_on_partial_t_u_n}, \eqref{bounds_on_symmetric_gradient_u_n_nu_L2} - \eqref{bounds_on_u_n_nu_L2_Lalmostinfty}, \eqref{bounds_on_F_n_Linfty}, \eqref{bounds_on_F_n_L4}, \eqref{bounds_on_F_n_with_delta}, \eqref{final_bounds_on_theta_n}, \eqref{final_bounds_on_grad_theta_n}, \eqref{almost_everywhere_convergence_theta_n}

\subsection{Compactness of the temperature \texorpdfstring{$\theta_n$}{t}} For the proof of the compactness of the temperature, we use the new variant of the entropy method adapted to the setting of the paper. To do this, we apply the div--curl lemma on the properly chosen quantities that lead to the desired compactness. 
We recall the definition of entropy $\eta_n$ in \eqref{eq:entropyn} and the corresponding estimate \eqref{bounds_entropy_n}--\eqref{bounds_entropy_n2} and also the definition of the flux $\bq_n$ in~\eqref{flux_n} and the related estimate~\eqref{flux_n_estimate}. Therefore, we can extract a subsequence and $\overline{\eta}$ and $\overline{\bq}$ such that 
\begin{equation}
\begin{aligned}
        \eta_n &\rightharpoonup \overline{\eta} &&\text{ weakly in }L^{q}_{t,x} \textrm{ for all } q\in [1,2),\\
        \bq_n &\rightharpoonup \overline{\bq} &&\text{ weakly in }L^{q}_{t,x} \textrm{ for all } q\in \left[0, \frac43 \right).
        %\{\DIV_{t,x}(\eta_n, q_n)\}_{n\in\N}&\text{ compact in } (W^{1,3}_0((0,T)\times\Omega))^*.
\end{aligned}\label{divcurl1}
\end{equation}
Furthermore, it follows from~\eqref{eq:entropy_equality_for_g_theta} and that the time-space vector $(\eta_n,\bq_n)$ and its time-space divergence fulfill  
\begin{equation}\label{divcurl2}
\begin{aligned}
        \|\DIV_{t,x}(\eta_n, \bq_n)\|_{L^1_{t,x}} &= \|\p_t\eta_n + \DIV_x \bq_n\|_{L^1_{t,x}} \\
        &= \left\| \frac{\kappa(\theta_n)|\nabla_x\theta_n|^2}{\theta^2_n} + \frac{\nu(\theta_n)|\D \bv_n|^2}{\theta_n} + \frac{\delta(\theta_n)|\B_n - \mathbb{I}|^2}{\theta_n}\right\|_{L^1_{t,x}}\leq C,
    \end{aligned}
\end{equation}
where for the last inequality we used~\eqref{bound_on_B_n_L2}. Consequently, using the Sobolev embedding we see that 
\begin{align}\label{precom1}
        \{\DIV_{t,x}(\eta_n, \bq_n)\}_{n=1}^{\infty} \qquad \text{ is pre-compact in } (W^{1,4}_0((0,T)\times\Omega))^*.
\end{align}

In a very similar way, we use the estimates \eqref{final_bounds_f_of_B_n} and~\eqref{strong_convergence_of_u_n_C_Lp} to get 
\begin{equation}\label{divcurl3}
\begin{aligned}
        f(\B_n) &\rightharpoonup \overline{f(\B)} &&\text{ weakly in }L^{q}_{t,x} \textrm{ for all } q\in [1,2),\\
        f(\B_n)\, \bv_n &\rightharpoonup \overline{f(\B)} \, \bv &&\text{ weakly in }L^{q}_{t,x} \textrm{for all }q\in \left[1,\frac43 \right).\
        %\{\DIV_{t,x}(f(\B_n), f(\B_n)\,u_n)\}_{n\in\N}&\text{ compact in }(W^{1,3}_0((0,T)\times\Omega))^*.
\end{aligned}
\end{equation}
Next, we deduce from~\eqref{eq:equation_for_f_of_B_for_g_theta} that 
\begin{equation}\label{divcurl4}
\begin{aligned}
        \|\DIV_{t,x}(f(\B_n), f(\B_n)\, \bv_n)\|_{L^1_{t,x}} &= \|\p_tf(\B_n) + \DIV_x \left(f(\B_n)\, \bv_n\right)\|_{L^1_{t,x}} \\
        &= \left\|2(\B_n - \mathbb{I}) : \D \bv_n-\delta(\theta)|\B_n - \mathbb{I}|^2\right\|_{L^1_{t,x}}\leq C,
    \end{aligned}
\end{equation}
where the last inequality follows from the uniform estimates~\eqref{bounds_on_F_n_with_delta} and~\eqref{bounds_on_gradient_u_n_nu_L2}. Therefore, using again the Sobolev embedding we get that 
\begin{align}\label{precom2}
        \{\DIV_{t,x}(f(\B_n), f(\B_n)\, \bv_n)\}_{n=1}^{\infty} \qquad \text{ is pre-compact in } (W^{1,4}_0((0,T)\times\Omega))^*.
\end{align}
%
%    
%    \begin{align*}
%        f(\B_n) &\rightharpoonup \overline{f(\B)}\,\,\quad\text{ weakly in }L^{\frac{5}{4}}_{t,x},\\
%        f(\B_n)\, u_n &\rightharpoonup \overline{f(\B)\,u}\quad\text{ weakly in }L^{\frac{5}{4}}_{t,x},\\
%        \{\DIV_{t,x}(f(\B_n), f(\B_n)\,u_n)\}_{n\in\N}&\text{ compact in }(W^{1,3}_0((0,T)\times\Omega))^*.
%    \end{align*}
%    
%    
We have constructed two weakly convergent time-space vector fields, namely $(f(\B_n), f(\B_n)\bv_n)_{n=1}^{\infty}$ and $(\eta_n, \bq_n)_{n=1}^{\infty}$, whose space-time divergence $\DIV_{t,x}$ is precompact in $W^{-1,\frac43}_{0,t,x}$. Next, we focus on finding a weakly convergent vector field whose time-space $\CURL_{t,x}$ is precompact in $W^{-1,\frac43}_{0,t,x}$. 
%
%    And due to the boundedness of $g'(w)$, there exists $\overline{g'(\theta)f(\B)}\in L^{\frac{5}{4}}_{t,x}$, such that
%    \begin{align*}
%        g'(\theta_n)f(\B_n) \rightharpoonup \overline{g'(\theta)f(\B)} \text{ weakly in }L^{\frac{5}{4}}_{t,x}.
%    \end{align*}
To do so, we consider the sequence $\{\theta_n^{\frac{1}{3}}\}_{n=1}^{\infty}$ and by \eqref{final_bounds_on_theta_n} we know that there exists $\overline{\theta^{\frac{1}{3}}}$ such that 
\begin{align}\label{theta_n_c}
        \theta_n^{\frac{1}{3}}&\rightharpoonup \overline{\theta^{\frac{1}{3}}} &&\text{ weakly in }L^{q}_{t,x} \textrm{ for all }q\in[1,6).
\end{align}
Next, it follows from~\eqref{bound_grad_theta_n_with_lambda} that  
%\begin{align*}
%        \|\theta_n^{\frac{1}{3}}\|_{L^{5+\varepsilon}_{t,x}}\leq C(\varepsilon, \|u_0\|_{ L^2_x}, \|\theta_0\|_{ L^1_x}, \|\ln(\theta_0)\|_{L^1_x}, \|f(\B_0)\|_{ L^1_x}, \|\F_0\|_{L^2_x}),\quad\varepsilon\in(0,1),
%    \end{align*}
%and by 
\begin{align*}
        \|\nabla_x \theta_n^{\frac{1}{3}}\|_{L^2_{t,x}} = \frac{1}{3}\left\|\frac{\nabla_x\theta_n}{\theta_n^{\frac23}}\right\|_{L^2_{t,x}} \leq C.
\end{align*}
Therefore, for the vector field given as $(\theta_n^{\frac{1}{3}},0,0)$, we get
\begin{align*}
        \|\CURL_{t,x}\,(\theta_n^{\frac{1}{3}}, 0, 0)\|_{L^2_{t,x}} \le C \|\nabla_x \theta_n^{\frac13}\|_{L^2_{t,x}} \leq C.
\end{align*}
Consequently, by the Sobolev embedding, we have
\begin{align}\label{precom3}
        \{\CURL_{t,x}\,(\theta_n^{\frac{1}{3}}, 0, 0)\}_{n=1}^{\infty} \qquad \text{ is pre-compact in } (W^{1,2}_0((0,T)\times\Omega))^*.
\end{align} 

Hence, we are able to apply the div--curl lemma (see Lemma~\ref{div-curl}) on the sequences 
$$
\{(\theta_n^{\frac{1}{3}}, 0, 0)\}_{n=1}^{\infty}, \quad \{(f(\B_n), f(\B_n)\, \bv_n)\}_{n=1}^{\infty} \quad \textrm{ and } \quad \{(\eta_n, \bq_n)\}_{n=1}^{\infty}
$$ 
and thanks to \eqref{divcurl1}, \eqref{precom1}, \eqref{divcurl3}, \eqref{precom2}, \eqref{theta_n_c} and \eqref{precom3} we observe that 
\begin{align*}
(\theta_n^{\frac{1}{3}}, 0, 0) \cdot (f(\B_n), f(\B_n)\, \bv_n) &\rightharpoonup (\overline{\theta^{\frac{1}{3}}}, 0, 0) \cdot (\overline{f(\B)}, \overline{f(\B)}\, \bv) &&\textrm{weakly in } L^{1}_{t,x},\\
(\theta_n^{\frac{1}{3}}, 0, 0) \cdot (\eta_n, \bq_n) &\rightharpoonup (\overline{\theta^{\frac{1}{3}}}, 0, 0) \cdot (\overline{\eta}, \overline{\bq}) &&\text{ weakly in }L^{1}_{t,x}.
\end{align*}
This in particular gives that 
%\begin{align*}
%        \theta_n^{\frac{1}{3}}&\rightharpoonup \overline{\theta^{\frac{1}{3}}} \quad\text{ weakly in }L^{5+\varepsilon}_{t,x},\\
%        \{\mathrm{curl}_{t,x}\,(\theta_n^{\frac{1}{3}},0,0)\}_{n\in\N}&\text{ compact in }(W^{1,2}_0((0, T)\times\Omega))^*.
%\end{align*}
%    Hence, by the div-curl lemma \ref{div-curl}
\begin{align}
        \eta_n\,\theta_n^{\frac{1}{3}} &\rightharpoonup \overline{\eta}\,\overline{\theta^{\frac{1}{3}}} &&\text{ weakly in }L^{1}_{t,x},\label{after_div_curl_conv_eta_multiplied_theta}\\
        f(\B_n)\,\theta_n^{\frac{1}{3}}&\rightharpoonup\overline{f(\B)}\,\overline{\theta^{\frac{1}{3}}}&&\text{ weakly in }L^1_{t,x}\label{after_div_curl_conv_f_multipled_theta}.
\end{align}
Since the function $f$ is nonnegative and $g'$ is nonincreasing (due to the fact that $g$ is concave), we have that for any $w\in L^q_{t,x}$ with $q>5$
$$
    0\leq f(\B_n)(-g'(\theta_n) + g'(w))(\theta_n^{\frac{1}{3}} - w^\frac{1}{3}) \qquad \textrm{ almost everywhere in } (0,T)\times \Omega.
$$
In particular, for $w := \left(\overline{\theta^{\frac{1}{3}}}\right)^3$ we obtain
\begin{align*}
      0\leq f(\B_n)\left(-g'(\theta_n) + g'\left(\left(\overline{\theta^{\frac{1}{3}}}\right)^3\right)\right)\left(\theta_n^{\frac{1}{3}} - \overline{\theta^{\frac{1}{3}}}\right),
\end{align*}
which by simple algebraic manipulation leads to 
\begin{align}\label{stt1}
        -f(\B_n)g'(\theta_n)\overline{\theta^{\frac{1}{3}}} \leq -f(\B_n)g'(\theta_n)\theta_n^{\frac{1}{3}} + f(\B_n) g'\left(\left(\overline{\theta^{\frac{1}{3}}}\right)^3\right)\left(\theta_n^{\frac{1}{3}} - \overline{\theta^{\frac{1}{3}}}\right)
\end{align}
almost everywhere in $(0,T)\times \Omega$. Next, since $g'$ is bounded, see~\eqref{function:g'_bounds_linfty}, we can also deduce  
\begin{align}
        g'(\theta_n)f(\B_n) &\rightharpoonup \overline{g'(\theta)f(\B)} &&\text{ weakly in }L^{q}_{t,x} \textrm{ for all } q\in [1,2),\label{AAA1}\\
        f(\B_n)g'(\theta_n)\theta_n^{\frac{1}{3}} &\rightharpoonup \overline{f(\B)g'(\theta)\theta^{\frac{1}{3}}} &&\text{ weakly in }L^1_{t,x}.\label{AAA2}
\end{align}
Thanks to \eqref{after_div_curl_conv_f_multipled_theta}, and due to the fact that $g'\left(\overline{\theta^{\frac{1}{3}}}\right)\in L^{\infty}_{t,x}$, we have
$$
\lim_{n\to \infty}\int_{0}^{T} \int_{\Omega} f(\B_n) g'\left(\left(\overline{\theta^{\frac{1}{3}}}\right)^3\right)\left(\theta_n^{\frac{1}{3}} - \overline{\theta^{\frac{1}{3}}}\right)\diff x \diff t =0.
$$
Letting $n\to \infty$, using the above identity and also \eqref{AAA1} we get
\begin{align}\label{ineq:for_monotonicity_trick_f_of_B}
        \int_0^T\int_\Omega -\overline{g'(\theta)f(\B)}\,\overline{\theta^{\frac{1}{3}}}\diff x\diff t\leq \int_0^T\int_\Omega -\overline{g'(\theta)f(\B)\theta^{\frac{1}{3}}}\diff x\diff t.
\end{align}
Denoting also $\overline{\ln \theta}$ as the weak limit
$$
\ln \theta_n \rightharpoonup \overline{\ln \theta}\qquad \textrm{ weakly in } L^2_{t,x},
$$
using \eqref{ineq:for_monotonicity_trick_f_of_B}, the definition of $\eta_n$, see~\eqref{eq:entropyn}, and  applying~\eqref{after_div_curl_conv_eta_multiplied_theta}, and \eqref{ineq:for_monotonicity_trick_f_of_B} we obtain
\begin{equation}\label{ineq:for_monotonicity_trick_ln_theta}
        \begin{aligned}
            \int_0^T\int_\Omega \overline{\ln \theta }\,\overline{\theta^{\frac{1}{3}}}\diff x\diff t &\overset{\eqref{eq:entropyn}}{=} \int_0^T\int_\Omega \overline{\eta}\,\overline{\theta^{\frac{1}{3}}}\diff x\diff t + \int_0^T\int_\Omega \overline{g'(\theta)f(\B)}\,\overline{\theta^{\frac{1}{3}}}\diff x\diff t\\
            &\overset{\eqref{ineq:for_monotonicity_trick_f_of_B}}\ge  \lim_{n\to+\infty}\int_0^T\int_\Omega\eta_n\,\theta_n^{\frac{1}{3}}\diff x\diff t + \int_0^T\int_\Omega \overline{g'(\theta)f(\B)\theta^{\frac{1}{3}}}\diff x\diff t\\
            &\overset{\eqref{eq:entropyn}}{=} \lim_{n\to \infty} \int_0^T\int_\Omega \ln \theta_n\theta_n^{\frac{1}{3}}\diff x\diff t - \lim_{n\to \infty}\int_0^T\int_\Omega g'(\theta_n)f(\B_n)\theta_n^{\frac{1}{3}}\diff x\diff t\\
             &\qquad {}\qquad + \int_0^T\int_\Omega \overline{g'(\theta)f(\B)\theta^{\frac{1}{3}}}\diff x\diff t\\
            &\; \geq \int_0^T\int_\Omega \overline{(\ln \theta )\theta^{\frac{1}{3}}}\diff x\diff t.
        \end{aligned}
\end{equation}
Now, notice that $w\mapsto \ln(w)$ and $w\mapsto w^{\frac{1}{3}}$ are increasing functions. Thus, for any nonnegative function  $w\in L^1((0, T)\times\Omega)$, fulfilling  $\ln w \in L^2_{t,x}$, there holds
\begin{align}\label{Minty3}
        \int_0^T\int_\Omega (\ln \theta_n - \ln w )(\theta_n^{\frac{1}{3}} - w^{\frac{1}{3}})\diff x\diff t \geq 0.
\end{align}
Due to \eqref{ineq:for_monotonicity_trick_ln_theta} we may let $n\to \infty$  and deduce
\begin{align}\label{ineq:monotonicity_trick_for_arbitrary_w}
        \int_0^T\int_\Omega (\overline{\ln \theta } - \ln w)(\overline{\theta^{\frac{1}{3}}} - w^{\frac{1}{3}})\diff x\diff t \geq 0.
\end{align}
Moving forward, we repeat the Minty method. Assume that $h\in L^{\infty}_{t,x}$ is arbitrary and $\lambda >0$ and set
$$
        w := e^{\overline{\ln\theta} - \lambda h}
$$
in~\eqref{ineq:monotonicity_trick_for_arbitrary_w}. The fact that such $w$ is admissible follows from the following. 
Since $h\in L^\infty_{t,x}$, then $e^{-\lambda h}\in L^\infty_{t,x}$. Moreover, since the exponential is a convex function, we can use the weak lower semicontinuity to deduce     
%$$
%        \phi \mapsto \int_0^T\int_\Omega e^\phi\diff x\diff t
%    $$
%    we can deduce
$$
        \|\exp(\overline{\ln\theta})\|_{L^1_{t,x}} \leq \liminf_{n\to\infty}\|\exp(\ln\theta_n)\|_{L^1_{t,x}} =\liminf_{n\to\infty}\|\theta_n\|_{L^1_{t,x}} \leq C.
$$
Thus, $w$ is an admissible function in \eqref{ineq:monotonicity_trick_for_arbitrary_w} and with this choice after division by $\lambda$ it follows that 
$$
        \int_0^T\int_\Omega \left(\overline{\theta^{\frac{1}{3}}} - e^{\frac{1}{3}\left(\overline{\ln\theta} - \lambda h\right)}\right) \, h\diff x\diff t\geq 0.
$$
Finally, letting  $\lambda\to 0_+$ and using the fact that $h$ is arbitrary we get
\begin{equation}\label{Minty7}
    \overline{e^{\frac{1}{3}\ln\theta}} = \overline{\theta^{\frac{1}{3}}} = e^{\frac{1}{3}\overline{\ln\theta}} \text{ a.e. in }(0, T)\times\Omega.
\end{equation}
We show that the above identity implies the strong convergence of the temperature claimed in~\eqref{pointn}$_1$.
To do so, we set $w := \theta_n$ in~\eqref{ineq:monotonicity_trick_for_arbitrary_w}. Then, using~\eqref{ineq:for_monotonicity_trick_ln_theta}, \eqref{Minty7} and the fact that the exponential is the increasing function, we deduce
$$
\begin{aligned}
&\lim_{n\to+\infty}\int_0^T\int_\Omega \left|(\overline{\ln \theta } - \ln \theta_n)\left(\overline{\theta^{\frac{1}{3}}} - \theta_n^{\frac{1}{3}}\right)\right|\diff x\diff t =\lim_{n\to+\infty}\int_0^T\int_\Omega \left|(\overline{\ln \theta } - \ln \theta_n)\left(e^{\frac13 \overline{\ln \theta}} - e^{\frac13 \ln \theta_n}\right)\right|\diff x\diff t \\ 
&\quad =\lim_{n\to+\infty}\int_0^T\int_\Omega (\overline{\ln \theta} - \ln \theta_n)(\overline{\theta^{\frac{1}{3}}} - \theta_n^{\frac{1}{3}})\diff x\diff t  = 0.
\end{aligned}
$$
Hence, up to the subsequence that we do not relabel,
\begin{align*}
       (\overline{\ln \theta} - \ln \theta_n )(e^{\frac{1}{3}\overline{\ln\theta}} - e^{\frac{1}{3}\ln\theta_n}) = (\overline{\ln \theta} - \ln \theta_n)(\overline{\theta^{\frac{1}{3}}} - \theta_n^{\frac{1}{3}}) \rightarrow 0 \text{ almost everywhere  in }(0, T)\times\Omega.
\end{align*}
Since $x \mapsto e^{\frac{1}{3}x}$ is a strictly increasing function, the above convergence result is possible only if 
\begin{align*}
        \ln \theta_n \rightarrow \overline{\ln\theta} \quad \text{ almost everywhere in }(0, T)\times\Omega,
\end{align*}
which in turn implies that
    \begin{align}\label{almost_everywhere_convergence_theta_n}
        \theta_n \rightarrow \theta := e^{\overline{\ln\theta}} \text{ almost everywhere in }(0, T)\times\Omega,
    \end{align}
that is \eqref{pointn}$_1$.

\subsection{Compactness of \texorpdfstring{$\F_n$}{F}}
In this part we show~\eqref{pointn}$_2$ and even more we prove the following convergence result
\begin{align}\label{strong_convergence_F_n}
    \F_n \rightarrow \F \text{ strongly in }L^2((0, T)\times\Omega).
\end{align}
We closely follow~\cite[Subsection 6.4]{bulicek2022onplanar} and~\cite[Subsection 3.4]{bulicek2024threeD}. Although the two-dimensional setting could indicate that the primary choice would be the use of~\cite{bulicek2022onplanar} the opposite is true. In fact, if the viscosity $\nu$ and the shear modules $g$ were independent of the temperature, the proof in~\cite{bulicek2022onplanar} could easily be adapted to our setting. However, the temperature dependence brings additional difficulties\footnote{The key problem is that even if we have the almost everywhere convergence of the temperature $\theta_n\to \theta$, we cannot prove the following identification
\begin{align*}
         \overline{g(\theta)(\F\,\F^T):\D \bv} &=  g(\theta)\overline{(\F\,\F^T):\D\bv},\\
         \overline{\nu(\theta)|\D \bv|^2} &= \nu(\theta)\overline{|\D \bv|^2}
\end{align*}
in sense of measures, which is true in case of constant $\nu$ and $g$.}
and therefore we must proceed differently, while clearly clearly indicating the corresponding differences.

In \cite{bulicek2022onplanar,bulicek2024threeD}, it is shown, that \eqref{strong_convergence_F_n} holds true provided that there exists $L\in L^2((0, T)\times\Omega)$ such that 
\begin{align}\label{inequality_to_prove_strong_L2}
        \int_0^T\int_\Omega -(\overline{|\F|^2} - |\F|^2)\p_t\phi - \bv(\overline{|\F|^2} - |\F|^2)\,\nabla_x\phi \diff x\diff t\leq \int_0^T\int_\Omega L(\overline{|\F|^2} - |\F|^2)\phi\diff x\diff t
\end{align}
holds for any nonnegative $\phi\in \mathcal{C}^1_0((-\infty, T)\times\Omega)$. Such a setting would be insufficient for the paper, but we can rather straightforwardly generalise the above result as follows. The convergence properties~\eqref{strong_convergence_F_n} follows from \eqref{inequality_to_prove_strong_L2} by using the renormalization procedure (see (cf. \cite[(6.79)]{bulicek2022onplanar} for details) and for that it is enough to require only that 
\begin{equation}
\begin{split}
&L\in L^1((0, T)\times \Omega),\\
&L(\overline{|\F|^2} - |\F|^2)\in L^1((0, T)\times\Omega),
\end{split}\label{need2}
\end{equation}
which is the setting we can obtain. 

We proceed here only formally, and for rigorous proof we refer to~\cite{bulicek2024threeD}. We also need to use here the concept of the biting limit and the Chacon biting lemma, see~\cite{BaMu89}. According to that, we know that for any sequence $a_n$ fulfilling
$$
\|a_n\|_{L^1_{t,x}}\le C,
$$
there exists $a\in L^1((0,T)\times \Omega)$ and there exists nondecreasing sequence of measurable sets $E_j\subset (0,T)\times \Omega$, such that
\begin{equation}\label{biting}
\begin{aligned}
a_n &\rightharpoonup a &&\textrm{weakly in } L^1 (E_j) \quad \textrm{ for all }j\in \mathbb{N},\\
|((0,T)\times \Omega) \setminus E_j| & \to 0 &&\textrm{as $j\to \infty$.}  
\end{aligned}
\end{equation}
We call $a$ the biting limit. It is clear that in case the classical weak limit exits, the biting and the weak limit must coincide in the space $L^1$ and therefore in what follows we primarily work with the biting limits.  

Thanks to  Egorov's theorem and \eqref{almost_everywhere_convergence_theta_n} we may identify the weak limits in \eqref{nonlinear_n} and to observe 
\begin{equation}\label{nonlinear_f}
\begin{aligned}
        \overline{g(\theta)\F\,\F^T} &= g(\theta)\overline{\F\,\F^T}= g(\theta) \B,\\
        \overline{\delta(\theta)|\F|^2} &= \delta(\theta)\overline{|\F|^2},\\
        \overline{\delta(\theta)\F\,\F^T\,\F} &= \delta(\theta)\overline{\F\,\F^T\,\F},\\
        \overline{\nu(\theta)\D \bv_n} &= \nu(\theta) \D \bv
\end{aligned}
\end{equation}
almost everywhere in $(0, T)\times\Omega$. In addition, due to the uniform estimates \eqref{bounds_on_F_n_Linfty}--\eqref{bounds_on_F_n_with_delta} and also \eqref{bounds_on_symetric_gradient_u_n_L2}, by using the Egorov theorem and the point-wise convergence of the temperature $\theta_n$ in~\eqref{almost_everywhere_convergence_theta_n}, we may conclude that the biting limits fulfills the following
\begin{equation}\label{biting2}
\begin{aligned}
         \overline{\delta(\theta)|\F\,\F^T|^2} &=  \delta(\theta)\overline{|\F\,\F^T|^2},\\
         \overline{\nu(\theta)|\D \bv|^2} &= \nu(\theta)\overline{|\D \bv|^2},\\
         \overline{g(\theta) \F\, \F^T :\D \bv} &= g(\theta)\overline{ \F\, \F^T :\D \bv}
\end{aligned}
\end{equation}
almost everywhere in $(0, T)\times\Omega$.  

Next, take the scalar product of \eqref{main_sys_for_g_theta}$_2$ with $2\F$ and obtain
\begin{equation*}
\begin{split}
        \p_t|\F_n|^2 + \DIV_x(|\F_n|^2 \bv_n) - 2\nabla_x \bv_n : (\F_n \, \F_n^T)  + \delta(\theta_n)(|\F_n\,\F^T_n|^2- |\F_n|^2) = 0.
\end{split}
\end{equation*}
Letting $n\to \infty$ we gain with the help of the above convergence results (see \cite{bulicek2024threeD} for rigorous justification) the following
\begin{equation}\label{lim11}
\begin{split}
        \p_t\overline{|\F|^2} + \DIV_x(\overline{|\F|^2}\bv) - 2\overline{\nabla_x\bv : (\F \, \F^T)}  + \delta(\theta)(\overline{|\F\,\F^T|^2}- \overline{|\F|^2}) = 0.
\end{split}
\end{equation}
Here, we want to set $\mathbb{G}:=\F \phi$ in  \eqref{almost_weak_formulation_F}, however such a setting is not possible due to the low integrability of the term  $\delta(\theta)\overline{\F\,\F^T\,\F}$, which does not belong to $L^{\frac{4}{3}}$, and therefore we cannot test by functions which are only in $L^4$. To solve this issue, we use as a test function 
$$
    \frac{\F}{1 + \varepsilon|\F|},
$$
which is bounded. Thanks to this choice, and after the classical renormalisation procedure (see \cite{diperna1989ordinary}), and with the help of already obtain convergence results, we deduce
\begin{equation*}
\begin{split}
        &2\p_t\frac{\varepsilon |\F|^2 -\ln (1+\varepsilon|\F|^2)}{\varepsilon^2} + 2\DIV_x\left(\frac{\varepsilon |\F|^2 -\ln (1+\varepsilon|\F|^2)}{\varepsilon^2} \bv\right)\\
         &\quad - 2\overline{\nabla_x \bv \, \F} : \frac{\F}{1+\varepsilon|\F|}  + \delta(\theta)\left(\overline{\F\, \F^T\, \F} - \F\right): \frac{\F}{1+\varepsilon|\F|} = 0.
\end{split}
\end{equation*}
Thanks to the fact we work with the biting limits, we can now easily let $\varepsilon \to 0_+$ and conclude
\begin{equation}
\label{lim12}
\begin{split}
        &\p_t |\F|^2 + \DIV_x\left(|\F|^2 \bv\right) - 2\overline{\nabla_x \bv \, \F} : \F  + \delta(\theta)\left(\overline{\F\, \F^T\, \F} - \F\right):\F = 0.
\end{split}
\end{equation}
Subtracting \eqref{lim12} from \eqref{lim11}, we see that
\begin{equation}\label{lim13}
\begin{split}
        &\p_t(\overline{|\F|^2}-|\F|^2) + \DIV_x\left((\overline{|\F|^2}-|\F|^2)\bv\right)  + \delta(\theta)\left(\overline{|\F\,\F^T|^2}-\overline{\F\, \F^T\, \F}:\F\right)\\ 
        &\quad = 2\left(\overline{\nabla_x\bv : (\F \, \F^T)}-\overline{\nabla_x \bv \, \F} : \F\right) +\delta(\theta)(\overline{|\F|^2}-|\F|^2)
\end{split}
\end{equation}
Here, the last term on the left hand side is nonnegative since the mapping $\F \mapsto \F\, \F^T \, \F$ is monotone, see \cite[Lemma~4.2]{bulicek2022onplanar}. The last term on the right hand side is in the required form and we need to focus on the first term on the right hand side. To do so, we use \cite[Theorem 1.7]{bulicek2024threeD} and it follows from the equation \eqref{eq:main_syst}$_1$, the convergence results \eqref{strong_convergence_of_u_n_C_Lp} and \eqref{conv_Fn}, and from the assumptions \eqref{bouds_nu} and \eqref{function:g_bounds_linfty} that
\begin{equation*}
\overline{\nu(\theta)|\D \bv|^2} + \overline{g(\theta) \F \,\F^T : \nabla_x \bv} =\overline{\nu(\theta) \D \bv} : \D\bv + \overline{g(\theta) (\F\,\F^T)} : \nabla_x \bv,
\end{equation*}
almost everywhere in $(0,T)\times \Omega$. Recall, that we consider her the biting limits. Using the strong convergence of the temperature \eqref{almost_everywhere_convergence_theta_n}, we deduce from the above identity that 
\begin{equation}\label{key}
\frac{\nu(\theta)}{g(\theta)}\overline{|\D \bv|^2} + \overline{ \F \,\F^T : \nabla_x \bv} =\frac{\nu(\theta)}{g(\theta)}|\D \bv|^2 + \overline{ (\F\,\F^T)} : \nabla_x \bv.
\end{equation}
Inserting \eqref{key} into \eqref{lim13}, where we also neglect the last term on the left hand side, leads to the following inequality 
\begin{equation}\label{lim14}
\begin{aligned}
        &\p_t(\overline{|\F|^2}-|\F|^2) + \DIV_x\left((\overline{|\F|^2}-|\F|^2)\bv\right)  +\frac{2\nu(\theta)}{g(\theta)} \left(\overline{|\D \bv|^2} -|\D \bv|^2\right)\\ 
        &\quad \le   2\left(\overline{ (\F\,\F^T)} : \nabla_x \bv-\overline{\nabla_x \bv \, \F} : \F\right) +\delta(\theta)(\overline{|\F|^2}-|\F|^2)\\
        &\quad =   2\left(\overline{ (\F\,\F^T)}-\F\, \F^T\right) : \D \bv +2\left(  \nabla_x \bv \, \F-\overline{\nabla_x \bv \, \F}\right):\F +\delta(\theta)(\overline{|\F|^2}-|\F|^2)\\
        &\quad \le 2|\D\bv|(\overline{|\F|^2}-|\F|^2)+2|\F|(\overline{|\nabla_x\bv|^2}-|\nabla_x\bv|^2)^{\frac12}(\overline{|\F|^2}-|\F|^2)^{\frac12} +\delta(\theta)(\overline{|\F|^2}-|\F|^2)
\end{aligned}
\end{equation}
Using  the assumptions \eqref{bouds_nu} and \eqref{function:g_bounds_linfty} and also the localised version of the Korn inequality, see Appendix in \cite{bulicek2024threeD}, we deduce from \eqref{lim14} 
\begin{equation}\label{lim15}
\begin{aligned}
        &\p_t(\overline{|\F|^2}-|\F|^2) + \DIV_x\left((\overline{|\F|^2}-|\F|^2)\bv\right) +\frac{2\nu(\theta)}{g(\theta)} \left(\overline{|\D \bv|^2} -|\D \bv|^2\right)\\ 
        &\quad \le 2|\D\bv|(\overline{|\F|^2}-|\F|^2)+C|\F|^2(\overline{|\F|^2}-|\F|^2)+\frac{2\nu(\theta)}{g(\theta)} \left(\overline{|\D \bv|^2} -|\D \bv|^2\right)+\delta(\theta)(\overline{|\F|^2}-|\F|^2).
\end{aligned}
\end{equation}
Finally, from the above inequality, we can deduce \eqref{inequality_to_prove_strong_L2}, where 
$$
    L := C(1 + |\D \bv|+|\F|^2 + \delta(\theta)).
$$
Consequently, the strong convergence~\eqref{strong_convergence_F_n} follows.
We want to emphasize that the above computations was rather formal, and we encourage the interested reader to \cite{bulicek2022onplanar,bulicek2024threeD} for rigorous justifications of several steps. 
%
%\begin{equation}
%\begin{aligned}\label{almost_weak_formulation_F}
%        &\int_0^T\int_\Omega -\F:\p_t\bG - \F\otimes \bv \because \nabla_x \bG - \overline{\nabla_x \bv\,\F} : \bG + \frac{1}{2}\left(\overline{\delta(\theta) \F\,\F^T\,\F} - \overline{\delta(\theta)\F}\right): \bG\diff x\diff t \\
%        &\qquad = \int_\Omega \F_0(x) : \bG(0, x)\diff x
%    \end{aligned}
%\end{equation}
%
%\cite[Theorem~1.7]{bulicek2024threeD}
    
The convergence \eqref{strong_convergence_F_n} combined with the convergence results obtained previously is enough to conclude \eqref{weak_formulation_u_g_theta}, \eqref{weak_formulation_F_g_theta} from \eqref{almost_weak_formulation_u} and \eqref{almost_weak_formulation_F}. To show \eqref{weak_formulation_theta_g_theta}, we can use the a~priori bounds \eqref{bounds_entropy_n}--\eqref{flux_n_estimate} and let $n\to \infty$ in the weak formulation of~\eqref{eq:entropy_equality_for_g_theta}. Using the point-wise convergence of $\theta_n$ and $\F_n$, and the Fatou lemma for the nonnegative terms on the right hand side, we can easily conclude \eqref{weak_formulation_theta_g_theta}.

Furthermore, due to  \eqref{almost_everywhere_convergence_theta_n} and \eqref{strong_convergence_F_n}, and since $\theta_n>0$ and $\det \F_n >0$, we have 
\begin{align}\label{theta_det_F_geq_0}
    \theta \geq 0,\quad \det\F \geq 0,\quad\text{ a.e. in }(0, T)\times\Omega.
\end{align}
Even more, the  Fatou lemma together with the uniform estimates \eqref{bound_on_B_n_L2}, \eqref{bound_on_ln_det_B_n} imply that
$$
    \|\ln\theta\|_{L^\infty_t L^1_x} + \|\ln\det(\F\,\F^T)\|_{L^\infty_t L^1_x} \leq C,
$$
which, together with \eqref{theta_det_F_geq_0}, is enough to deduce
$$
    \theta > 0, \quad \det\F > 0.
$$
%\end{proof}
almost everywhere in $(0,T)\times \Omega$.

To finish the proof, we multiply \eqref{Energy_n} by $\varphi \in \mathcal{C}^1_c(-\infty, T)$ and integrate over $(0,T)$ to get
\begin{equation}\label{Energy_n_in}
-\int_0^T \int_{\Omega} \left(\frac{|\bv_n|^2}{2} + e_n\right) \partial_t \varphi \diff x \diff t =\varphi(0)\int_{\Omega} \frac{|\bv_0^n|^2}{2} + e_0^n \diff x.
\end{equation} 
Using the uniform bounds \eqref{final_bounds_f_of_B_n} and \eqref{final_bounds_on_theta_n}, the definition of $e_n$ in \eqref{inten} and the strong convergence of $\theta_n$, $\F_n$ and $\bv_n$, we can let $n \to \infty$ in \eqref{Energy_n_in} and deduce \eqref{Energy_def}, where we also use the assumptions on $g$, see \eqref{function:g_bounds_linfty}--\eqref{function:g'_bounds_linfty}, and the strong convergence properties of the initial conditions in \eqref{conv:init}. The proof is complete.

\section{Existence of the weak solutions for the case \ref{contant_g_case}}\label{S:4}

We now focus on the problem~\ref{contant_g_case} and set $c_v=g(\theta)\equiv 1$ for simplicity. For this setting, we consider the system \eqref{main_sys_for_g_theta}--\eqref{eq:boundaryconditions_for_g_theta}, where we replace~\eqref{main_sys_for_g_theta}$_3$ by~\eqref{thetaP1}. Note that \eqref{main_sys_for_g_theta}$_3$ and~\eqref{thetaP1} are equivalent on the level of classical solutions.  Our main result reads as follows.
\begin{thm}\label{main_theorem_case_constant_g}
Assume that $\nu$,  $\kappa$ and $\delta$ are continuous functions satisfying \eqref{bounds_kappa}--\eqref{bounds_delta}. Let initial conditions  $\{\bv_0, \theta_0, \F_0\}$ fulfill
\begin{equation}\label{conv:initII}
\begin{aligned}
\bv_0 \in L^2_{0,\DIV},\quad  \theta_0 \in L^1_x, \quad \ln \theta_0 \in L^1_x, \quad \F_0 \in L^2_x \quad \textrm{ and }\quad  \ln \det F_0 \in L^2_x 
\end{aligned}
\end{equation}
and  $\det\F_0>0$ and $\theta_0 > 0$ almost everywhere in $\Omega$. Then there exists a triple $\{\bv, \F, \theta \}$ such that  
\begin{equation}\label{spaces}
\begin{aligned}
\bv &\in \mathcal{C}([0,T]; L^2_{0,\DIV}) \cap L^2_t W^{1,2}_{0,x},\\
\F&\in \mathcal{C}([0,T]; L^2_x) \cap L^4_{t,x},\\
\theta &\in L^{\infty}_t L^1_x \cap L^{p}_{t,x}\cap L^{q}_tW^{1, q}_{x} &&\textrm{  for any } p\in [1,2) \textrm{ and }q\in \left[1,\frac43\right),\\
\ln \theta &\in L^{\infty}_t L^1_x, \qquad \ln \det \F \in  L^{\infty}_t L^1_x,
\end{aligned}
\end{equation}
where $\det \F>0$ and $\theta >0$ almost everywhere in $(0,T)\times \Omega$. The functions $(\bv, \F, \theta)$ solves  \eqref{main_sys_for_g_theta}--\eqref{eq:boundaryconditions_for_g_theta} in the following sense:
\begin{equation}
\begin{aligned}\label{weak_formulation_u_constant_g}
        &\int_0^T\int_\Omega -\bv \cdot\p_t\bphi - \bv\otimes \bv : \nabla_x\bphi + 2\nu(\theta)\D\bv :\nabla_x\bphi + 2\F\,\F^T:\nabla_x\bphi\diff x\diff t \\
        &\qquad = \int_\Omega \bv_0(x)\cdot\bphi(0, x)\diff x
\end{aligned}
\end{equation}
for any $\vect{\varphi}\in \mathcal{C}^1_c([0, T)\times\Omega; \R^2)$ with $\DIV_x\bphi = 0$,
\begin{equation}
\begin{aligned}\label{weak_formulation_F_constant_g}
        &\int_0^T\int_\Omega -\F:\p_t\bG - \F\otimes \bv \because \nabla_x \bG - \nabla_x \bv\,\F : \bG + \frac{1}{2}\delta(\theta)(\F\,\F^T\,\F - \F): \bG\diff x\diff t \\
        &\qquad = \int_\Omega \F_0(x) : \bG(0, x)\diff x
    \end{aligned}
\end{equation}
    for any $\bG\in \mathcal{C}^1_c([0, T)\times\Omega; \R^{2\times 2})$,
\begin{equation}
\begin{aligned}\label{weak_formulation_theta_constant_g}
       & -\int_0^T\int_\Omega\theta\,\p_t\phi\diff x\diff t - \int_0^T\int_{\Omega}\theta\,\bv\cdot \nabla_x\phi\diff x\diff t + \int_0^T\int_{\Omega}\kappa(\theta)\nabla_x\theta\cdot\nabla_x\phi\diff x\diff t\\
   &\quad  -\int_0^T\int_{\Omega}2\nu(\theta)|\D\bv|^2\phi\diff x\diff t - \int_0^T\int_{\Omega}\delta(\theta)|\F\,\F^T - \mathbb{I}|^2\phi\diff x\diff t = \int_\Omega \theta_0(x)\phi(0,x)\diff x
    \end{aligned}
\end{equation}
for any $\phi\in \mathcal{C}^1_c([0, T)\times\Omega)$.
\end{thm}
Since the proof of Theorem \ref{main_theorem_case_constant_g} is very similar to the one conducted in \cite{bulicek2022onplanar,bathory2021largedata}, and does not include many new techniques, compared to the proof of the Theorem \ref{main_theorem_case_general_g} we will skip most of the technical detail and simply focus on the differences in the approach.

\subsection{Approximating scheme}
To prove our main theorem, we use a four-step approximation scheme. Our first goal will be to prove the existence of a solution to the system
\begin{equation}\label{epsilon_app}
\left\{
\begin{aligned}
&\partial_t \bv + \DIV_x (\bv\otimes \bv) -\DIV_x\T = 0, \qquad \DIV_x \bv = 0,\\
&\p_t\F + \DIV_x(\F\otimes \bv) - \nabla_x \bv\,\F + \frac{1}{2}\delta(\theta)(\F\,\F^T\, \F - \F) = \varepsilon\Delta\F,\\
&-p\mathbb{I} + 2\nu(\theta)\D \bv + 2(\F\,\F^T - \mathbb{I}) = \T,\\
&\p_t\theta + \DIV_x (\theta \bv) -\DIV_x(\kappa(\theta)\nabla_x\theta) = 2\nu(\theta)|\D \bv |^2 + \delta(\theta)|\F\,\F^T - \mathbb{I}|^2.
\end{aligned}
\right.
\end{equation}
for a fixed $\varepsilon\in(0, 1)$. The initial and boundary conditions are set  as
\begin{align}\label{eq:boundaryconditions_approx_syst}
\bv|_{\p\Omega} = 0, \quad \bv(0, x) = \bv_0(x), \quad \F(0, x) = \F_0, \quad \nabla_x\F\cdot\mathbf{n}=0, \quad \nabla_x\theta\cdot \mathbf{n} = 0, \quad \theta(0, x) = \theta^r_0,
\end{align}
for $r \in (0,1)$ and
\begin{align}\label{thetar}
    \theta^r_0(x) = \left\{\begin{aligned}
         &\theta_0(x) \quad &&\text{ whenever }r\leq \theta_0(x)\leq r^{-1},  \\
         &1 \quad &&\text{ otherwise.}
    \end{aligned}
    \right.
\end{align}
To prove the existence of the $\varepsilon$-approximation, we employ the Galerkin approximation. Most importantly, we split the convergence results. The Galerkin approximative schema for $\theta$ and $\bv$ and $\F$ allows testing only by linear functions of the solutions. However, and as can be seen by the result of Lemma~\ref{lem:renormalized_equation_for_theta}, the expected bounds on the temperature are obtained via testing by a nonlinear function. Therefore, the existence scheme is not completely trivial and here we follow the methods developed in \cite{bathory2021largedata,BuMaRa09,BuFeMa09}. This means that we first converge in the equation for the temperature but keep the equation for $\bv$ and $\F$ in the Galerkin form. Then, we can deduce the optimal estimates for the temperature and consequently also for $\bv$ and $\F$. Next, we remove the finite-dimensional approximation for all quantities and finally, we let $\varepsilon \to 0_+$ and $r \to 0_+$ in \eqref{epsilon_app}. 

\subsection{Galerkin approximation} We fix $s>3$. Since $\Omega$ is two dimensional domain, we have
\begin{align}\label{morrey_embedding_galerkin}
W^{s-1, 2}(\Omega) \hookrightarrow L^\infty(\Omega).
\end{align}
Next, we consider $\{\bomega_n\}_{n\in\N}$, $\{\A_n\}_{n\in\N}$, $\{\phi_m\}_{m\in\N}$ to be  the orthogonal bases of $W^{s, 2}_{0,\DIV}(\Omega; \R^2)$, $W^{s,2}(\Omega;\R^{2\times 2})$ and $W^{s, 2}(\Omega; \R)$ respectively, that are also orthonormal with repsect to  $L^2$, and whose projections (in $L^2$)  are continuous. We also introduce the projection of the initial data as 
\begin{equation}\label{init_G}
\begin{aligned}
\bv_{0n}&:=\sum_{i=1}^n\bomega_i \left(\int_{\Omega}\bv_{0}\bomega_i \diff x\right)\qquad &&\textrm{ and we have }\|\bv_{0n}\|_{L^2_x}\le \|\bv_{0}\|_{L^2_x},\\
\F_{0n}&:=\sum_{i=1}^n\A_i \left(\int_{\Omega}\F_{0}\A_i \diff x\right)\qquad &&\textrm{ and we have }\|\F_{0n}\|_{L^2_x}\le \|\F_{0}\|_{L^2_x},\\
\theta_{0m}^r&:=\sum_{i=1}^m\phi_i \left(\int_{\Omega}\theta_{0}^r\phi_i \diff x\right)\qquad &&\textrm{ and we have }\|\theta_{0n}^r\|_{L^2_x}\le \|\theta_{0}^r\|_{L^2_x}
\end{aligned}
\end{equation}
and note that we have the following convergence results
\begin{equation}\label{conv_init}
\begin{aligned}
\bv_{0n}&\to \bv_{0} &&\textrm{strongly in } L^2_x,\\ 
\F_{0n}&\to \F_{0} &&\textrm{strongly in } L^2_x,\\ 
\theta_{0n}^r &\to \theta_{0}^r &&\textrm{strongly in } L^2_x,
\end{aligned}
\end{equation}
where $\theta_0^r$ is defined in \eqref{thetar}. We look for the Galerkin approximation of \eqref{epsilon_app}, which has the following form
\begin{align}\label{def.galerkin}
    \bv_{nm}(t, x) = \sum_{i = 1}^n \alpha^{nm}_i(t)\bomega_i(x),\quad \F_{nm}(t, x) = \sum_{i = 1}^n \beta^{nm}_i(t)\A_i(x),\quad \theta_{nm}(t, x) = \sum_{i = 1}^m\gamma^{nm}_i(t)\phi_i(x),
\end{align}
and we require the approximations to satisfy the following initial conditions
\begin{equation}\label{init_G2}
\bv_{nm}(0, x)=\bv_{0n}(x), \qquad \F_{nm}(0, x)=\F_{0n}(x), \qquad \theta_{nm}(0, x)=\theta_{0m}^r( x)
\end{equation}
for almost all $x\in \Omega$, and we also require them to satisfy the following system of ordinary differential equations:
\begin{equation}
\begin{aligned}\label{eq:galerkin_velocity}
    &\frac{\diff }{\diff t}\int_{\Omega}\bv_{nm}\cdot\bomega_j\diff x - \int_{\Omega}\bv_{nm}\otimes \bv_{nm}:\nabla_x\bomega_j\diff x\\
    &\qquad + \int_{\Omega}2\nu(\theta_{nm})\D \bv_{nm} :\nabla_x\bomega_j\diff x + \int_{\Omega}2\F_{nm}\,\F_{nm}^T : \nabla_x\bomega_j\diff x = 0,
\end{aligned}
\end{equation}
for any $j = 1,\ldots, n$;
\begin{equation}\label{eq:galerkin_elastic}
\begin{aligned}
    &\frac{\diff}{\diff t}\int_{\Omega}\F_{nm}: \A_j\diff x - \int_{\Omega}\F_{nm}\otimes \bv_{nm} \because \nabla_x \A_j\diff x - \int_{\Omega}(\nabla_x \bv_{nm}\, \F_{nm}):\A_j\diff x  \\
    &\quad +\frac{1}{2}\int_{\Omega}\delta(\theta_{nm})\left(\F_{nm}\,\F_{nm}^T\,\F_{nm}-\F_{nm}\right) : \A_j\diff x  + \varepsilon\int_{\Omega}\nabla_x \F_{nm} \because \nabla_x \A_j\diff x = 0,
\end{aligned}
\end{equation}
for any $j = 1, \ldots, n$; 
\begin{equation}\label{eq:galerkin_temperature}
\begin{aligned}
    &\frac{\diff}{\diff t}\int_{\Omega}\theta_{nm}\phi_i\diff x - \int_{\Omega}\bv_{nm} \theta_{nm}\cdot \nabla_x \phi_i\diff x + \int_{\Omega}\kappa(\theta_{nm})\nabla_x\theta_{nm}\cdot\nabla_x\phi_i\diff x\\
    &\qquad -\int_{\Omega}2\nu(\theta_{nm})|\D\bv_{nm}|^2\phi_i\diff x - \int_{\Omega}\delta(\theta_{nm})|\F_{nm}\,\F_{nm}^T - \mathbb{I}|^2\phi_i\diff x = 0
\end{aligned}
\end{equation}
for any  $i = 1,\ldots, m$. 

The local-in-time existence of a solution to the above system follows from the Carath\'{e}odory theory and the global-in-time existence then follows from the estimates deduced in the next section.

\subsection{Convergence with \texorpdfstring{$m\to+\infty$}{m}}

We follow very closely the argumentation in \cite[Appendix B - estimate (B.15)]{bulicek2022onplanar} and therefore the proof here will be sketchy and we focus mainly onl the parts that are different. Hence, we  multiply \eqref{eq:galerkin_velocity} and \eqref{eq:galerkin_elastic} by $\alpha_j$ and $2\beta_j$ respectively, after summing over $j=1,\ldots, n$ and adding both equations together, we deduce also with the help of the Young inequality and the assumptions on material parameters \eqref{bounds_kappa}--\eqref{bounds_delta}
\begin{equation}
\begin{aligned}\label{galerkin_bounds_on_velocity_and_elastic_symm gradient}
    &\|\bv_{nm}\|_{L^\infty_t L^2_x} + \|\F_{nm}\|_{L^\infty_t L^2_x} + \|\D \bv_{nm}\|_{L^2_{t, x}} + \|\F_{nm}\|_{L^4_{t, x}} + \sqrt{\varepsilon}\|\nabla_x \F_{nm}\|_{L^2_{t,x}}\\
    &\qquad  \leq C(\|\bv_{0n}\|_{L^2_x}, \|\F_{0n}\|_{L^2_x})\le C,
\end{aligned}
\end{equation}
where for the second inequality we used the properties of initial conditions in~\eqref{init_G}.  Using also the  Korn inequality, we have
\begin{align}\label{galerkin_bounds_on_velocity_and_elastic}
    \|\bv_{nm}\|_{L^\infty_t L^2_x} + \|\F_{nm}\|_{L^\infty_t L^2_x} + \|\nabla_x \bv_{nm}\|_{L^2_{t, x}} + \|\F_{nm}\|_{L^4_{t, x}} + \sqrt{\varepsilon}\|\nabla_x \F_{nm}\|_{L^2_{t,x}} \leq C.
\end{align}
It is worth noting here, that the above estimates are independent of $n$, $m$ and $r$ and due to weak-lower semicontinuity are kept till the end of the proof. We follow by estimates that are $n$- or $r$-dependent. Due to the orthogonality of the bases $\{\bomega_n\}_{n\in\N}$ and $\{\A_n\}_{n\in\N}$, the embedding \eqref{morrey_embedding_galerkin}, and \eqref{galerkin_bounds_on_velocity_and_elastic} we get
\begin{equation}
\begin{aligned}\label{infty_bounds_for_velocit_and_elastic}
    &\|\bv_{nm}\|_{L^\infty_t W^{1, \infty}_x} \leq \|\bv_{nm}\|_{L^\infty_t W^{s, 2}_x} \leq C(n)\|\bv_{nm}\|_{L^\infty_t L^2_x} \leq C(n),\\
    &\|\F_{nm}\|_{L^\infty_t W^{1, \infty}_x} \leq \|\F_{nm}\|_{L^\infty_t W^{s, 2}_x} \leq C(n)\|\F_{nm}\|_{L^\infty_t L^2_x} \leq C(n).
\end{aligned}
\end{equation}
Next, we deduce the estimates for the temperature. We multiply \eqref{eq:galerkin_temperature} by $\gamma_j(t)$ and sum the results over $j=1,\ldots, m$. Using the fact that $\DIV_x \bv_{nm}=0$ we obtain
\begin{align*}
    \frac{1}{2}\frac{\diff}{\diff t}\|\theta_{nm}\|^2_{L^2_x} + \|\sqrt{\kappa(\theta_{nm})}\nabla_x\theta_{nm}\|^2_{L^2_x} = \int_{\Omega}2\nu(\theta_{nm})|\D\bv_{nm}|^2\theta_{nm} + \int_{\Omega}\delta(\theta_{nm})|\F_{nm}\,\F^T_{nm} - \mathbb{I}|^2\theta_{nm}.
\end{align*}
Thus, with the use of H\"{o}lder's inequality, \eqref{infty_bounds_for_velocit_and_elastic} as well as the bounds \eqref{bounds_kappa}--\eqref{bounds_delta}, we arrive at
\begin{align}\label{bounds_temp_1}
    \frac{1}{2}\frac{\diff}{\diff t}\|\theta_{nm}\|^2_{L^2_x} + C_1\|\nabla_x\theta_{nm}\|^2_{L^2_x}\leq C\left(1+\|\bv_{nm}\|^2_{L^\infty_t W^{1, \infty}_x} + \|\F_{nm}\|^4_{L^\infty_t L^\infty_x}\right)\|\theta_{nm}\|_{L^2_x}.
\end{align}
The Gr\"{o}nwall inequality and the estimates \eqref{infty_bounds_for_velocit_and_elastic} then directly leads to
\begin{align}\label{bounds_temp_2}
    \|\theta_{nm}\|_{L^{\infty}_t L^2_x} + \|\nabla_x\theta_{nm}\|_{L^2_{t,x}} \leq C(n,T)\|\theta_{0mr}\|_{L^2_x}\le C(n,r).
\end{align}
Moving forward, we focus on estimates for time derivatives. First, using the equation \eqref{eq:galerkin_velocity}, the orthogonality of the basis $\{\bomega_i\}_{i\in \mathbb{N}}$  and applying the bounds \eqref{infty_bounds_for_velocit_and_elastic} together with the H\"{o}lder inequality, we get
\begin{equation*}
    \begin{aligned}
        |(\alpha^{nm}_j)'(t)|&= \left|\int_{\Omega} \p_t \bv_{nm} \cdot  \bomega_j \diff x \right|\\
        &=\left| \int_{\Omega} \bv_{nm}\otimes \bv_{nm} :  \nabla_x\bomega_j - (\nu(\theta_{nm})\D\bv_{nm} :  \nabla_x\bomega_j) \right.
        - (\F_{nm}\,\F^T_{nm}) :  \nabla_x\bomega_j \diff x \Big| \le C(n).
    \end{aligned}
\end{equation*}
Thus, using the definition of $\bv_{nm}$ in \eqref{def.galerkin}, we get
\begin{align}\label{galerkin_strong_time_derivative_bounds_velocity}
    \|\p_t \bv_{nm}\|_{L^{\infty}_t W^{1,\infty}_x} \leq C(n).
\end{align}%
%At the same time, with the help of the Fundamental Theorem of Calculus and H\"{o}lder's inequality, we may deduce
%\begin{align}\label{arzela_ascoli_est_velocity}
%    |\alpha_j(t) - \alpha_j(s)| \leq \int_s^t|(\alpha_j)'(\tau)|\diff \tau\lesssim |t - s|^{\frac{1}{2}},\quad\text{for any }t, s, \in\R.
%\end{align}
By a very similar arguments, we can deduce from \eqref{eq:galerkin_elastic} and \eqref{infty_bounds_for_velocit_and_elastic} that 
\begin{align}\label{galerkin_strong_time_derivative_bounds_elastic}
    \|\p_t\F_{nm}\|_{L^{\infty}_t W^{1,\infty}_x} \leq C(n).
\end{align}
%and
%\begin{align}\label{arzela_ascoli_est_elastic}
%    |\beta_j(t) - \beta_j(s)| \lesssim |t - s|^{\frac{1}{2}},\quad\text{for any }t, s, \in\R.
%\end{align}
For the temperature, we proceed slightly differently. Due to the continuity (independent of $m$) of the projection of the basis $\{\phi_j\}_{j\in \mathbb{N}}$ and from the identity \eqref{eq:galerkin_temperature}, we have for all times $t\in (0,T)$ 
\begin{equation*}
    \begin{split}
       \|\p_t \theta_{nm}\|^2_{W^{-1,2}(\Omega)} &\le C \left(\int_{\Omega} |\bv_{nm}\theta_{nm}|^2 + |\nabla_x\theta_{nm}|^2 \diff x\right)\\
       &\quad  +C\left(\int_{\Omega}|\D\bv_{nm}|^4+ |\F_{nm}\,\F_{nm}^T - \mathbb{I}|^4\diff x\right)\\
       & \le C(n)\left(1+\|\theta_{nm}\|_{W^{1,2}_x}^2\right),
    \end{split}
\end{equation*}
where we also used the assumptions \eqref{bounds_kappa}--\eqref{bouds_nu}. Integration over $(0,T)$ and the use of \eqref{bounds_temp_2} gives
\begin{equation}\label{galerkin_strong_bounds_time_derivative_temperature}
    \|\p_t\theta_{nm}\|_{L^2((0, T); (W^{1,2}(\Omega))^*)}\le C(n,r).
\end{equation}

Having the estimates \eqref{infty_bounds_for_velocit_and_elastic}, \eqref{galerkin_strong_time_derivative_bounds_velocity}, \eqref{galerkin_strong_time_derivative_bounds_elastic}, \eqref{bounds_temp_2} and \eqref{galerkin_strong_bounds_time_derivative_temperature}, we can use 
the Banach--Alaoglu theorem, the classical Sobolev--Morrey embedding and the Aubin--Lions compactness lemma, see Lemma~\ref{aubin-lions}, and we can find  a triple $\{\bv_{n}, \F_n, \theta_n\}$ such that for a subsequence that we do not relabel we have the following convergence results as $m\to \infty$: for the velocity field
\begin{equation}
\begin{aligned}
    \p_t \bv_{nm} &\overset{*}{\rightharpoonup} \p_t \bv_{n}\qquad &&\text{ weakly* in }L^{\infty}_t W^{1,\infty}_{0,\DIV},\label{galerkin_m_conv_velocity}\\
    \bv_{nm} &\rightarrow \bv_{n}\qquad &&\text{ strongly in } \mathcal{C}([0, T]; W^{1,\infty}(\Omega));
\end{aligned}
\end{equation}
for the elastic stress tensor
\begin{equation}
\begin{aligned}
    \p_t \F_{nm} &\overset{*}{\rightharpoonup} \p_t \F_{n}\qquad &&\text{ weakly* in }L^2_t W^{1,\infty}_x,\\
    \F_{nm} &\rightarrow \F_{n}\qquad &&\text{ strongly in } \mathcal{C}([0, T]; W^{1,\infty}(\Omega));\label{galerkin_m_conv_elastic}
\end{aligned}
\end{equation}
and for the temperature
\begin{equation}
\begin{aligned}
    \p_t \theta_{nm} &\rightharpoonup \p_t \theta_{n}\qquad &&\text{ weakly in }L^2((0, T); W^{-1,2}(\Omega)),\label{galerkin_m_conv_temp_partial_t}\\
    \theta_{nm} &\overset{*}{\rightharpoonup} \theta_{n}\qquad &&\text{ weakly* in }L^\infty_t L^2_x,\\
    \theta_{nm} &\rightarrow \theta_{n}\qquad &&\text{ strongly in }L^2_t L^2_x.%\label{galerkin_m_conv_temp_strong}
\end{aligned}
\end{equation}
In addition, we have the following form for $\bv_n$ and $\F_n$
\begin{align}\label{def.galerkin22}
    \bv_{n}(t, x) = \sum_{i = 1}^n \alpha^{n}_i(t)\bomega_i(x),\quad \F_{n}(t, x) = \sum_{i = 1}^n \beta^{n}_i(t)\A_i(x).
\end{align}
The convergence results \eqref{galerkin_m_conv_velocity}--\eqref{galerkin_m_conv_temp_partial_t} allows us to let $m\to \infty$ in  \eqref{eq:galerkin_velocity}--\eqref{eq:galerkin_temperature} and deduce 
\begin{equation}
\begin{aligned}\label{eq:galerkin_1stconv_velocity}
    &\frac{\diff }{\diff t}\int_{\Omega}\bv_{n}\cdot\bomega_j\diff x - \int_{\Omega}\bv_{n}\otimes \bv_{n}:\nabla_x\bomega_j\diff x\\
    &\qquad + \int_{\Omega}2\nu(\theta_{n})\D \bv_{n} :\nabla_x\bomega_j\diff x + \int_{\Omega}2\F_{n}\,\F_{n}^T : \nabla_x\bomega_j\diff x = 0,
\end{aligned}
\end{equation}
for any $j = 1,\ldots, n$;
\begin{equation}\label{eq:galerkin_1stconv_elastic}
\begin{aligned}
    &\frac{\diff}{\diff t}\int_{\Omega}\F_{n}: \A_j\diff x - \int_{\Omega}\F_{n}\otimes \bv_{n} \because \nabla_x \A_j\diff x - \int_{\Omega}(\nabla_x \bv_{n}\, \F_{n}):\A_j\diff x  \\
    &\quad +\frac{1}{2}\int_{\Omega}\delta(\theta_{n})\left(\F_{n}\,\F_{n}^T\,\F_{n}-\F_{n}\right) : \A_j\diff x  + \varepsilon\int_{\Omega}\nabla_x \F_{n} \because \nabla_x \A_j\diff x = 0,
\end{aligned}
\end{equation}
for any $j = 1, \ldots, n$; 
\begin{equation}\label{eq:galerkin_1stconv_temperature}
\begin{aligned}
    &\langle \partial_t \theta_{n}\phi\rangle - \int_{\Omega}\bv_{n} \theta_{n}\cdot \nabla_x \phi\diff x + \int_{\Omega}\kappa(\theta_{n})\nabla_x\theta_{n}\cdot\nabla_x\phi\diff x\\
    &\qquad -\int_{\Omega}2\nu(\theta_{n})|\D\bv_{n}|^2\phi\diff x - \int_{\Omega}\delta(\theta_n)|\F_{n}\,\F_{n}^T - \mathbb{I}|^2\phi\diff x = 0
\end{aligned}
\end{equation}
for any $\phi \in W^{1,2}(\Omega)$ and almost all $t\in (0,T)$. Furthermore, due to the standard parabolic embedding we also have that $\theta_n\in \mathcal{C}([0,T]; L^2(\Omega))$ and that the unknowns attain the  following initial conditions 
$$
\bv_n(0)=\bv_{0n}, \qquad \F_n(0)=\F_{0n},\qquad \theta_n(0)=\theta_0^r.
$$

\subsection{Convergence with \texorpdfstring{$n\to +\infty$}{n}} \label{sect12}
First, we recall the uniform bounds for $\bv_n$ and $\F_n$. Due to the weak-lower semicontinuity, it follows from \eqref{galerkin_bounds_on_velocity_and_elastic} that 
\begin{align}\label{galerkin_bounds_on_velocity_and_elastic_n}
    \|\bv_{n}\|_{L^\infty_t L^2_x} + \|\F_{n}\|_{L^\infty_t L^2_x} + \|\nabla_x \bv_{n}\|_{L^2_{t, x}} + \|\F_{n}\|_{L^4_{t, x}} + \sqrt{\varepsilon}\|\nabla_x \F_{n}\|_{L^2_{t,x}} \leq C.
\end{align}
For the  time derivatives $\partial_t \bv_n$ and $\partial_t \F_n$, the bounds \eqref{galerkin_strong_time_derivative_bounds_elastic}, \eqref{galerkin_strong_time_derivative_bounds_velocity} are not uniform with respect to~$n$. Therefore, we must proceed differently. Based on the estimate \eqref{galerkin_bounds_on_velocity_and_elastic_n} and on the identities \eqref{eq:galerkin_1stconv_velocity}--\eqref{eq:galerkin_1stconv_elastic}, following \cite[Appendix B - estimates (B.19) and (B.21)]{bulicek2022onplanar}  we  obtain
\begin{align}\label{final_bounds_time_elastic_velocity}
    \|\p_t \bv_{n}\|_{L^2((0, T); W^{-1, 2}_{0, \DIV}(\Omega))} + \|\p_t \F_{n}\|_{L^{\frac{4}{3}}((0, T); W^{-1, 2}(\Omega))} \leq C.
\end{align}

Next, we focus on estimates for $\theta_n$. In this case, we want to test \eqref{eq:galerkin_1stconv_temperature} by $\theta_n^{\lambda - 1}$ for  arbitrary $0 < \lambda < 1$ as was already explained in Lemma~\ref{lem:renormalized_equation_for_theta}. To do so properly, we first provide the minimum principle for $\theta_n$. We set $\phi:=\min \{0, \theta_n - r\}$ in \eqref{eq:galerkin_1stconv_temperature} and in a very similar manner as in~\cite{BaBuMa24}, we observe that 
\begin{align}\label{theta_bigger_than_r}
\theta_n(x, t) \geq r>0,\quad \text{ for a.e. }(t,x) \in (0,T)\times \Omega.
\end{align}
We proceed by the uniform estimate for $\theta_n$.  We set $\phi:=1$ in~\eqref{eq:galerkin_1stconv_temperature}. Then, after integrating the result over  $(0, t)$, using Lemma \ref{lem:integration_of_lipschitz}, the condition $\DIV_x \bv_n = 0$, and the positivity of $\theta_n$ \eqref{theta_bigger_than_r}, we get for almost all $t\in (0,T)$
$$
\|\theta_n(t)\|_{L^1_x}  = \int_0^t\int_\Omega \nu(\theta_n)|\D\bv_n|^2 + \delta(\theta_n)|\F_n\,\F_n^T - \mathbb{I}|^2\diff \tau + \|\theta_n(0)\|_{L^1_x}.
$$
Thus, after applying the estimate \eqref{galerkin_bounds_on_velocity_and_elastic_n},  the definition of $\theta_0^r$ and the assumption \eqref{conv:initII}, we  get
\begin{align}\label{temp_linfty_l1_bounds}
    \|\theta_n\|_{L^\infty_t L^1_x} \leq C(1+ \|\theta_{0}^r\|_{L^1_x})\leq C(1+ \|\theta_{0}\|_{L^1_x})\le C.
\end{align}
Moving forward, we can set $\phi:=\theta_n^{\lambda-1}$ in~\eqref{eq:galerkin_1stconv_temperature}. Performing similar operations as in Lemma~\ref{lem:renormalized_equation_for_theta}, we obtain (using also the uniform bounds \eqref{galerkin_bounds_on_velocity_and_elastic_n})
\begin{align}\label{temp_fractional_gradient_bound}
    \int_0^T\int_{\Omega}\frac{|\nabla_x\theta_{n}|^2}{\theta_n^{\lambda}}\diff x\diff t \leq C(\lambda) \quad\text{ for any }\lambda\in(1, 2).
\end{align}
In particular, setting $\phi:=\theta_n^{-1}$ in~\eqref{eq:galerkin_1stconv_temperature}, we deduce that for all $t\in (0,T)$
\begin{equation}
\begin{aligned}\label{log_bound}
 \int_{\Omega} |\ln \theta_n (t)| \diff x  +  \int_0^T\int_{\Omega}\frac{|\nabla_x \theta_{n}|^2}{\theta_n^{2}}\diff x\diff t &\leq C\left( 1 + \int_{\Omega} \theta(t) + \theta_0^r + |\ln \theta_0^r| \diff x\right)\\
 &\leq C\left( 1 + \int_{\Omega} \theta_0 + |\ln \theta_0| \diff x\right)\le C,
\end{aligned}
\end{equation}
where we used \eqref{temp_linfty_l1_bounds}. Note here, we got the $r$-independent estimate. With this (similarly as in the estimates below \eqref{bound_grad_theta_n_with_lambda}), we may infer uniform bounds on $\theta_n$ and $\nabla_x\theta_n$
\begin{align}
    \|\theta_n\|_{L^{p'}_{t} L^{p}_x} &\leq C(p) &&\textrm{for all } p\in [1,\infty),\label{final_bounds_on_temp}\\
    \|\nabla_x\theta_n\|_{L^{q}_{t,x}} &\leq C(q) &&\textrm{for all } q\in \left[1,\frac43 \right), \label{final_bounds_on_grad_temp}\\
    \|\ln \theta_n\|_{L^{\infty}_{t} L^{1}_x} &\leq C,\label{final_bounds_on_log}\\
    \|\nabla_x \ln \theta_n\|_{L^{2}_{t,x}} &\leq C. \label{final_bounds_on_grad_temp_log}
\end{align}
In order to get the compactness of the temperature, we also deduce the estimate for the time derivative of $\theta_n$. This is done similarly as in \eqref{galerkin_strong_bounds_time_derivative_temperature}, but because we have much weaker estimates on $\D\bv_n$ and $\F_n$ than in previous section, compare \eqref{infty_bounds_for_velocit_and_elastic} and \eqref{galerkin_bounds_on_velocity_and_elastic_n}, we can deduce from \eqref{eq:galerkin_1stconv_temperature} the following bound (note that $W^{2,2}\hookrightarrow \mathcal{C}(\overline{\Omega})$)
\begin{align}\label{final_bounds_time_temp}
    \|\p_t\theta_n\|_{L^1((0, T); W^{-2, 2}(\Omega))} \leq  C.
\end{align}
Consequently, we may apply the Banach--Alaoglu theorem and the generalized version of the  Aubin--Lions lemma, see Lemma~\ref{aubin-lions}, and it follows from \eqref{final_bounds_on_temp}--\eqref{final_bounds_time_temp} that there is a subsequence that we do not relabel such that
\begin{align}
     \theta_n &\rightharpoonup \theta\quad &&\text{ weakly in }L^{q}_{t,x} \textrm{ for all }q\in [1,2),\label{weak_conv_temp_2_lambda}\\
     \theta_n &\rightharpoonup \theta\quad &&\text{ weakly in }L^{p}_t W^{1, p}_x \textrm{ for all } p\in \left[1,\frac43 \right),\\
     \ln \theta_n &\rightharpoonup \ln \theta\quad &&\text{ weakly in }L^{2}_t W^{1, 2}_x,\\ 
     \theta_n &\rightarrow \theta\quad &&\text{ strongly in }L^{q}_t L^{s}_x \textrm{ for all } s\in [1,\infty) \textrm{ and all } q\in [1,s').\label{thn_s}
\end{align}
Similarly, due to \eqref{galerkin_bounds_on_velocity_and_elastic_n}--\eqref{final_bounds_time_elastic_velocity}, we have
\begin{align}
    \bv_n &\rightharpoonup \bv\quad &&\text{ weakly* in }L^{\infty}_t L^2_x,\label{CCC1}\\
    \bv_n &\rightharpoonup \bv\quad &&\text{ weakly in }L^{2}_t W^{1, 2}_{0,x},\label{weak_conv_velocity_sobolev}\\
    \p_t \bv_n &\rightharpoonup \p_t \bv\quad &&\text{ weakly in }L^{2}((0, T); W^{-1, 2}_{0, \DIV}(\Omega))\\
    \bv_n &\rightarrow \bv\quad &&\text{ strongly in }L^q_{t,x}, \text{ for }q\in[1, 4),\\
    \F_n &\rightharpoonup \F\quad &&\text{ weakly* in }L^{\infty}_t L^2_x,\\
    \F_n &\rightharpoonup \F\quad &&\text{ weakly in }L^{2}_t W^{1, 2}_x,\\
    \p_t \F_n &\rightharpoonup \p_t \F\quad &&\text{ weakly in }L^{\frac{4}{3}}((0, T); W^{-1, 2}(\Omega;\R^{2\times 2}))\\
    \F_n &\rightarrow \F\quad &&\text{ strongly in }L^q_{t,x}, \text{ for }q\in[1, 4).\label{strong_conv_elastic_n}
\end{align}
The convergence results \eqref{weak_conv_temp_2_lambda}--\eqref{strong_conv_elastic_n} allow us to let $n\to \infty$ in all terms appearing in  \eqref{eq:galerkin_1stconv_velocity}--\eqref{eq:galerkin_1stconv_temperature} except the terms involving 
$$
\nu(\theta_{n})|\D\bv_{n}|^2,\qquad |\F_{n}\,\F_{n}^T - \mathbb{I}|^2\qquad \textrm{ and } \qquad \partial_t \theta_n.
$$
In particular, for the $\bv$ and $\F$ we obtain the following identities:
\begin{align}\label{eq:galerkin_2ndconv_velocity}
    \langle\p_t \bv,\bomega\rangle - \int_{\Omega}\bv\otimes \bv :\nabla_x\bomega\diff x
    + \int_{\Omega}2\nu(\theta)\D \bv : \nabla_x\bomega\diff x + \int_{\Omega}2\F\,\F^T : \nabla_x\bomega\diff x = 0
\end{align}
for almost all time $t\in (0,T)$ and for all $\bomega\in W^{1,2}_{0, \DIV}(\Omega)$;
\begin{equation}
\begin{aligned}\label{eq:galerkin_2ndconv_elastic}
    &\langle\p_t \F, \A\rangle - \int_{\Omega}\F\otimes \bv \because \nabla_x \A\diff x - \int_{\Omega}(\nabla_x \bv\, \F): \A\diff x  \\
    &\qquad +\frac{1}{2}\int_{\Omega}\delta(\theta)(\F\,\F^T\,\F-\F) : \A\diff x + \varepsilon\int_{\Omega}\nabla_x \F \because \nabla_x \A\diff x = 0
\end{aligned}
\end{equation}
for almost all time $t\in (0,T)$ and for all $\A\in W^{1,2}(\Omega; \R^{2\times 2})$. Moreover, by classical arguments, we have that $\bv \in \mathcal{C}([0,T];L^2_{0,\DIV})$ and $\F\in \mathcal{C}([0,T]; L^2(\Omega; \R^{2\times 2}))$ and also that $\bv(0)=\bv_0$ and $\F(0)=\F_0$. Furthermore, thanks to the Fatou lemma and the uniform bound \eqref{final_bounds_on_log}, we also have $\theta\in L^{\infty}_t L^1_x$.  

To deal with the problematic terms, we employ the energy methods. We multiply the $j$-th equation in \eqref{eq:galerkin_1stconv_velocity} by $\alpha_j^n$ and sum the result over $j=1,\ldots, n$ and integrate over $t\in (0,T)$ to conclude (using the fact that $\DIV_x \bv_n=0$)
\begin{align}\label{2ndconv_tested_by_u_velocity_n}
    \int_{\Omega}\frac{|\bv_n(T)|^2}{2}\diff x
    + \int_0^T\int_{\Omega}2\nu(\theta_n)|\D \bv_n|^2 + 2\F_n\,\F_n^T : \nabla_x \bv_n\diff x\diff t = \int_{\Omega}\frac{|\bv_{0n}|}{2}\diff x.
\end{align}
Similarly, we set $\bomega:=\bv$ in  \eqref{eq:galerkin_2ndconv_velocity}  and integrate the equation over $(0, T)$ and obtain
\begin{align}\label{2ndconv_tested_by_u_velocity}
   \int_{\Omega}\frac{|\bv(T)|^2}{2}\diff x
    + \int_0^T\int_{\Omega}2\nu(\theta)|\D \bv|^2 + 2\F\,\F^T : \nabla_x \bv\diff x\diff t = \int_{\Omega}\frac{|\bv_{0}|}{2}\diff x.
\end{align}
Similarly, we multiply the $j$-th equation in \eqref{eq:galerkin_elastic} by $2\beta_j^{n}$ and sum with respect to $j=1,\ldots,n$ to get after integration over $t\in (0,T)$ 
\begin{equation}
\begin{aligned}\label{eq:galerkin_ener1}
    &\int_{\Omega} |\F_n(T)|^2 \diff x + \int_0^T \int_{\Omega} \delta(\theta_n)|\F_n\,\F_n^T|^2 -  2 \nabla_x \bv_n : \F_n\, \F_n^T + 2\varepsilon |\nabla_x \F_n|^2 \diff x \diff t\\
    &\qquad  =  
   \int_0^T \int_{\Omega}\delta(\theta_n) |\F_n|^2 \diff x \diff t +  \int_{\Omega} |\F_{0n}(T)|^2 \diff x 
\end{aligned}
\end{equation}
and by setting $\A:=\F$ in \eqref{eq:galerkin_2ndconv_elastic}, we have 
\begin{equation}
\begin{aligned}\label{eq:ener2}
    &\int_{\Omega} |\F(T)|^2 \diff x + \int_0^T \int_{\Omega} \delta(\theta)|\F\,\F^T|^2 -  2 \nabla_x \bv : \F\, \F^T +2\varepsilon |\nabla_x \F_n|^2 \diff x \diff t\\
    &\qquad  =  
   \int_0^T \int_{\Omega}\delta(\theta) |\F|^2 \diff x \diff t +  \int_{\Omega} |\F_{0}(T)|^2 \diff x.
\end{aligned}
\end{equation}
Summing \eqref{2ndconv_tested_by_u_velocity_n} and \eqref{eq:galerkin_ener1}, we have
\begin{equation}
\begin{aligned}\label{eq:galerkin_ener12}
    &\int_0^T \int_{\Omega} 2\nu(\theta_n)|\D \bv_n|^2+ \delta(\theta_n)|\F_n\,\F_n^T|^2+2\varepsilon |\nabla_x \F_n|^2 \diff x \diff t  \\
    &\qquad =  
   \int_0^T \int_{\Omega}\delta(\theta_n) |\F_n|^2 \diff x \diff t +  \int_{\Omega}\frac{|\bv_{0n}|}{2}+ |\F_{0n}|^2-\frac{|\bv_n(T)|^2}{2}- |\F_n(T)|^2 \diff x.
\end{aligned}
\end{equation}
Similarly, summing \eqref{2ndconv_tested_by_u_velocity} and \eqref{eq:ener2} leads to 
\begin{equation}
\begin{aligned}\label{eq:galerkin_ener123}
    &\int_0^T \int_{\Omega} 2\nu(\theta)|\D \bv|^2+ \delta(\theta)|\F\,\F^T|^2  +2\varepsilon |\nabla_x \F|^2 \diff x \diff t \\
    &\qquad =  
   \int_0^T \int_{\Omega}\delta(\theta) |\F|^2 \diff x \diff t +  \int_{\Omega}\frac{|\bv_{0}|}{2}+ |\F_{0}|^2-\frac{|\bv(T)|^2}{2}- |\F(T)|^2 \diff x.
\end{aligned}
\end{equation}
Using \eqref{strong_conv_elastic_n}, \eqref{thn_s}, the assumption \eqref{bounds_delta}, the convergence of initial conditions in \eqref{conv_init} and also the weak lower semicontinuity, we get 
%
%
%, and comparing the results, we have
%
%In comparison, we test \eqref{eq:galerkin_1stconv_velocity} by $u_n$ (i.e. multiply the equation by $\alpha_j$ and sum over $j$), and integrate over $(0, t)$ to get
%\begin{align*}
%    \int_{\Omega}|u_{n}(t)|^2\diff x  + \int_{\Omega}\nu(\theta_{n})|D u_{n}|^2\diff x + \int_{\Omega}\F_{n}\,\F_{n}^T : \nabla_xu_n\diff x = \int_{\Omega}|u_{n}(0)|^2\diff x.
%\end{align*}
%We may perform an analogous argument, testing \eqref{eq:galerkin_1stconv_elastic} and \eqref{eq:galerkin_2ndconv_elastic} by $\F_n$ and $\F$ respectively. After adding the corresponding equations together, we may use \eqref{strong_conv_elastic_n} and Lemma \ref{L2convlemma}, to deduce
\begin{equation}
\begin{split}
     \limsup_{n\to +\infty}&\int_0^T\int_{\Omega}2\nu(\theta_{n})|\D \bv_{n}|^2\diff x+\delta(\theta_n)|\F_n\,\F_n^T|^2+2\varepsilon |\nabla_x \F_n|^2\diff x\diff t \\
     &\quad \leq  \int_0^T\int_{\Omega}2\nu(\theta)|\D \bv|^2+\delta(\theta) |\F\,\F^T|^2+2\varepsilon |\nabla_x \F|^2\diff x\diff t.
\end{split}\label{split}
\end{equation}
Finally, using the assumptions \eqref{bouds_nu}--\eqref{bounds_delta}, the convergence results \eqref{thn_s}--\eqref{strong_conv_elastic_n} and the inequality \eqref{split}, we get
\begin{equation}\label{tutu}
\begin{aligned}
 C_1&\limsup_{n\to \infty}\int_0^T \int_{\Omega} |\D\bv_n -\D\bv|^2 + |\F_n \, \F_n^T - \F\, \F^T|^2 \diff x \diff t\\
 &\le \limsup_{n\to \infty}\int_0^T \int_{\Omega} 2\nu(\theta_n)|\D\bv_n -\D\bv|^2 + \delta(\theta_n)|\F_n \, \F_n^T - \F\, \F^T|^2 +2\varepsilon |\nabla_x \F_n - \nabla_x \F|^2\diff x \diff t\\
&\le \limsup_{n\to \infty}\int_0^T \int_{\Omega} 2\nu(\theta_n)|\D\bv_n|^2  + \delta(\theta_n)|\F_n \, \F_n^T|^2 +2\varepsilon |\nabla_x \F_n|^2\diff x \diff t\\
&\qquad +\lim_{n\to \infty}\int_0^T \int_{\Omega} 2\nu(\theta_n)(|\D\bv|^2-2\D\bv_n : \D\bv)  + \delta(\theta_n)(|\F\, \F^T|^2-2 \F_n \, \F_n^T: \F\, \F^T) \diff x \diff t\\
&\qquad + \lim_{n\to \infty} \int_0^T \int_{\Omega} 2\varepsilon (|\nabla_x \F|^2- 2\nabla_x \F_n \because \nabla_x \F) \diff x \diff t\\
&\le \int_0^T \int_{\Omega} 2\nu(\theta)|\D\bv|^2  + \delta(\theta)|\F \, \F^T|^2 +2\varepsilon |\nabla_x \F|^2\diff x \diff t\\
&\qquad -\int_0^T \int_{\Omega} 2\nu(\theta)|\D\bv|^2  + \delta(\theta)|\F\, \F^T|^2+2\varepsilon |\nabla_x \F|^2\diff x \diff t=0
\end{aligned}
\end{equation}
Consequently, due to Korn inequality and the Sobolev embedding we deduced that 
\begin{align}
\bv_n &\to \bv &&\textrm{ strongly in } L^2_t W^{1,2}_x,\label{JW1}\\
\F_n &\to \F &&\textrm{ strongly in } L^2_t W^{1,2}_x, \label{JW2}\\
\F_n &\to \F &&\textrm{ strongly in } L^4_{t,x}. \label{JW3}
\end{align}
%
%
%
%
%which by weak lower semi-continuity implies
%\begin{multline*}
%    \lim_{n\to +\infty}\int_0^t\int_{\Omega}\nu(\theta_{n})|D u_{n}|^2\diff x\diff\tau + \int_0^t\int_\Omega |\F_n\,\F_n^T|^2\diff x\diff\tau\\
%    = \int_0^t\int_{\Omega}\nu(\theta)|D u|^2\diff x\diff\tau + \int_0^t\int_\Omega |\F\,\F^T|^2\diff x\diff\tau.
%\end{multline*}
%This is enough to conclude
%\begin{align*}
%    \int_0^T\int_\Omega |\sqrt{\nu(\theta_n)}Du_n - \sqrt{\nu(\theta)}Du|^2\diff x\diff t + \int_0^T\int_\Omega|\F_n\,\F_n^T - \F\,\F^T|^2\diff x\diff t = 0.
%\end{align*}
Having the above convergence results in hands, combined with \eqref{weak_conv_temp_2_lambda}--\eqref{strong_conv_elastic_n}, we deduce from \eqref{eq:galerkin_1stconv_temperature} the following 
\begin{equation}
\begin{aligned}\label{eq:galerkin_2ndconv_temp}
    &\int_0^T\int_\Omega - \theta\,\p_t\phi - \theta\,\bv \cdot \nabla_x\phi + \kappa(\theta)\nabla_x\theta\cdot\nabla_x\phi\diff x\diff t\\
    & \quad =\int_0^T\int_{\Omega}2\nu(\theta)|\D\bv|^2\phi+\delta(\theta)|\F\,\F^T - \mathbb{I}|^2\phi\diff x\diff t + \int_\Omega \theta^r_0(x)\phi(0, x)\diff x,
\end{aligned}
\end{equation}
for all $\phi\in \mathcal{C}^1_c([0, T)\times \overline{\Omega})$. Moreover, it follows from the above identity that 
\begin{equation}
\partial_t \theta \in L^1((0,T); W^{-2,2}(\Omega)).\label{exec}
\end{equation}

\subsection{Convergence with \texorpdfstring{$\varepsilon\to 0_+$}{e} and \texorpdfstring{$r\to 0_+$}{r}.} We denote by $\{\bv_{\varepsilon}, \F_{\varepsilon}, \theta_{\varepsilon}\}_{\varepsilon>0}$ the solution constructed in the previous section and set  $r=\varepsilon\to 0_+$ to finish the proof of Theorem~\ref{main_theorem_case_constant_g}. We recall the uniform $\varepsilon$-independent estimates  \eqref{galerkin_bounds_on_velocity_and_elastic_n}, \eqref{final_bounds_time_elastic_velocity}, \eqref{temp_linfty_l1_bounds}, \eqref{temp_fractional_gradient_bound} and \eqref{final_bounds_time_temp}, which remains valid also here due to the Fatou lemma and weak-lower semicontinuity. Therefore, we may again extract a subsequence that we do not relabel such that
\begin{align*}
     %\theta_{\varepsilon} &\rightharpoonup \theta_{}\quad &&\text{ weakly in }L^{2 - \lambda}_{t,x},\\
     \theta_{\varepsilon} &\rightharpoonup \theta\quad &&\text{ weakly in }L^{p}_t W^{1, p}_x \textrm{ for all } p\in \left[1, \frac43 \right),\\
     \theta_{\varepsilon} &\rightarrow \theta \quad &&\text{ strongly in }L^{s}_t L^{q}_x \textrm{ for all } q\in [1,\infty) \textrm{ and } s\in [1,q'),\\
     \ln \theta_{\varepsilon} &\rightarrow \ln \theta \quad &&\text{ weakly in }L^{2}_t W^{1,2}_x,\\
    \bv_{\varepsilon} &\rightharpoonup \bv \quad &&\text{ weakly* in }L^{\infty}_t L^2_x,\\
    \bv_{\varepsilon} &\rightharpoonup \bv\quad &&\text{ weakly in }L^{2}_t W^{1, 2}_{0,x},\\
    \p_t \bv_{\varepsilon} &\rightharpoonup \p_t \bv\quad &&\text{ weakly in }L^{2}((0, T); W^{-1, 2}_{0, \DIV}(\Omega))\\
    \bv_{\varepsilon} &\rightarrow \bv\quad &&\text{ strongly in }L^q_{t,x}, \text{ for }q\in[1, 4),\\
    \F_{\varepsilon} &\rightharpoonup \F\quad &&\text{ weakly* in }L^{\infty}_t L^2_x,\\
    \F_{\varepsilon} &\rightharpoonup \F \quad &&\text{ weakly in }L^{4}_{t,x},\\
    \p_t \F_{\varepsilon} &\rightharpoonup \p_t \F \quad &&\text{ weakly in }L^{\frac{4}{3}}((0, T); W^{-1, 2}(\Omega;\R^{2\times 2})),\\
    \varepsilon\nabla_x\F_{\varepsilon} &\rightarrow \OO\quad &&\text{ strongly in }L^{2}_{t,x}.
\end{align*}
Compared to the previous section, we did not get the strong convergence of $\F_{\varepsilon}\to \F$ directly from the uniform estimates. However, we can now use the convergence scheme from the proof of Theorem~\ref{main_theorem_case_general_g} (in fact, here it is easier since the function $\delta$ is bounded) and deduce
\begin{align*}
    \F_{\varepsilon} \rightarrow \F_{}\quad\text{ strongly in }L^2_{t,x}.
\end{align*}
Then we can deduce from \eqref{eq:galerkin_2ndconv_velocity} and \eqref{eq:galerkin_2ndconv_elastic} that
\begin{align}\label{eq:galerkin_2ndconv_velocity_zero}
    \langle\p_t \bv,\bomega\rangle - \int_{\Omega}\bv\otimes \bv :\nabla_x\bomega\diff x
    + \int_{\Omega}2\nu(\theta)\D \bv : \nabla_x\bomega\diff x + \int_{\Omega}2\F\,\F^T : \nabla_x\bomega\diff x = 0
\end{align}
for almost all time $t\in (0,T)$ and for all $\bomega\in W^{1,2}_{0, \DIV}(\Omega)$ and
\begin{equation}
\begin{aligned}\label{eq:galerkin_2ndconv_elastic_zero}
    &\langle\p_t \F, \A\rangle - \int_{\Omega}\F\otimes \bv \because \nabla_x \A+(\nabla_x \bv\, \F): \A-\frac{1}{2}\delta(\theta)(\F\,\F^T\,\F-\F) : \A\diff x  = 0
\end{aligned}
\end{equation}
for almost all time $t\in (0,T)$ and for all $\A\in W^{1,2}_x(\Omega; \mathbb{R}^{2\times 2})$. Moreover, we also have the final formulations in the main theorem \eqref{weak_formulation_u_constant_g} and \eqref{weak_formulation_F_constant_g}.

Next, we need to get the compactness of the terms appearing on the right-hand side of \eqref{eq:galerkin_2ndconv_temp}. We want to repeat the scheme in Section~\ref{sect12}, namely the computation between \eqref{2ndconv_tested_by_u_velocity_n}--\eqref{split}. To do so, we need to justify the setting $\A:=\F$ in \eqref{eq:galerkin_2ndconv_elastic_zero}. This can be however justify by the renormalisation technique developed in  \cite{diperna1989ordinary}, see also \cite{bulicek2022onplanar} in the context of viscoelastic fluids. Thus, similarly as in \eqref{split}, we deduce
\begin{equation}
\begin{split}
     \limsup_{\varepsilon\to 0_+}&\int_0^T\int_{\Omega}2\nu(\theta_{\varepsilon})|\D \bv_{\varepsilon}|^2\diff x+\delta(\theta_{\varepsilon})|\F_{\varepsilon}\,\F_{\varepsilon}^T|^2+2\varepsilon |\nabla_x \F_{\varepsilon}|^2\diff x\diff t \\
     &\quad \leq  \int_0^T\int_{\Omega}2\nu(\theta)|\D \bv|^2+\delta(\theta) |\F\,\F^T|^2\diff x\diff t.
\end{split}\label{split2}
\end{equation}
Having this, we may repeat \eqref{tutu} and conclude
\begin{align}
\bv_n &\to \bv &&\textrm{ strongly in } L^2_t W^{1,2}_x,\label{JW12}\\
\F_n &\to \F &&\textrm{ strongly in } L^4_{t,x}. \label{JW34}
\end{align} 
And finally, we can let $\varepsilon \to 0_+$ in \eqref{eq:galerkin_2ndconv_temp} and deduce  \eqref{weak_formulation_theta_constant_g}. Furthermore, since $\ln \theta \in L^2_t W^{1,2}_x$, we see that $\theta>0$ almost everywhere in $(0,T)\times \Omega$. The proof is complete.

%We carried out the discussion about the procedure under \eqref{strong_convergence_F_n}, therefore we skip the details here. In fact, this is even easier as $\delta(\theta_\varepsilon)\F_\varepsilon\,\F^T_\varepsilon\,\F_\varepsilon\in L^{\frac{4}{3}}_{t,x}$. Combining all of the convergences we arrive at
%\begin{align*}
%    \langle\p_t u_{},\omega\rangle - \int_{\Omega}u_{}\otimes u_{}:\nabla_x\omega\diff x
%    + \int_{\Omega}\nu(\theta_{})D u_{} : \nabla_x\omega\diff x + \int_{\Omega}\F_{}\,\F_{}^T : \nabla_x\omega\diff x = 0,
%\end{align*}
%for any $\omega\in W^{1,2}_0(\Omega)$,
%\begin{multline*}
%    \langle\p_t \F_{}, A\rangle - \int_{\Omega}\F_{}\otimes u_{} \because \nabla_x A\diff x - \int_{\Omega}(\nabla_x u_{}\, \F_{}):A\diff x  \\
%    +\frac{1}{2}\int_{\Omega}\delta(\theta_{})\F_{}\,\F_{}^T\,\F_{} : A\diff x - \frac{1}{2}\int_{\Omega}\delta(\theta_{})\F_{} : A \diff x  = 0,
%\end{multline*}
%for any $A\in W^{1,2}(\Omega; \R^{2\times 2})$, and
%\begin{multline*}
%    -\int_0^T\int_\Omega\theta\,\p_t\phi\diff x\diff t + \int_0^T\int_{\Omega}\theta\,u\cdot \nabla_x\phi\diff x\diff t + \int_0^T\int_{\Omega}\kappa(\theta)\nabla_x\theta\cdot\nabla_x\phi\diff x\diff t\\
%    -\int_0^T\int_{\Omega}\nu(\theta)|Du|^2\phi\diff x\diff t - \int_0^T\int_{\Omega}|\F\,\F^T - \mathbb{I}|^2\phi\diff x\diff t = \int_\Omega \theta_0(x)\phi(0, x)\diff x,
%\end{multline*}
%for any $\phi\in C^1_c([0, T)\times \overline{\Omega})$. Which ends the proof.

\appendix

\section{Auxiliary propositions} \label{AP}
We recall here several useful tools. The first two results are about the compactness of some weakly converging sequences.  
\begin{lem}\label{aubin-lions}\textup{\textbf{(Generalized Aubin--Lions lemma, \cite[Lemma 7.7]{MR3014456})}}
Denote by
$$
W^{1, p, q}(I; X_1, X_2) := \left\{u\in L^p(I; X_1); \frac{du}{dt}\in L^q(I; X_2)\right\}.
$$
Then if $X_1$ is a separable, reflexive Banach space, $X_2$ is a Banach space and $X_3$ is a metrizable locally convex Hausdorff space, $X_1$ embeds compactly into $X_2$, $X_2$ embeds continuously into $X_3$, $1 < p <\infty$ and $1\leq q\leq \infty$, we have
$$
W^{1, p, q}(I; X_1, X_3) \text{ embeds compactly into }L^p(I; X_2).
$$
In particular any bounded sequence in $W^{1, p, q}(I; X_1, X_3)$ has a convergent subsequence in $L^p(I; X_2)$.
\end{lem}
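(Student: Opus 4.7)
The plan is to combine two classical ingredients: an interpolation (Ehrling) inequality controlling the intermediate space $X_2$ by $X_1$ and $X_3$, and the Kolmogorov--Riesz--Fr\'echet characterization of relative compactness in $L^p(I;X_2)$. The latter reduces the problem to (a) showing that, for the bounded set $\mathcal{F} \subset W^{1,p,q}(I;X_1,X_3)$, the pointwise (or mean-value) slices are relatively compact in $X_2$, and (b) showing that the $L^p(I;X_2)$-moduli of continuity under time translation vanish uniformly on $\mathcal{F}$.

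First I would extend Ehrling's lemma to the present setting. Since $X_3$ is metrizable and locally convex, its topology is generated by a countable family of continuous seminorms $\{p_k\}_{k\in\N}$. The claim is that for every $\eps>0$ there exist an index $k=k(\eps)$ and a constant $C_\eps>0$ such that
$$
\|v\|_{X_2} \leq \eps\,\|v\|_{X_1} + C_\eps\, p_k(v) \qquad \text{for every }v\in X_1.
$$
I would prove this by contradiction: a failing sequence $v_n$ with $\|v_n\|_{X_1}=1$ and $\|v_n\|_{X_2}>\eps + n\,p_n(v_n)$ would admit a subsequence converging in $X_2$ by the compact embedding $X_1\hookrightarrow\hookrightarrow X_2$; the inequality forces $p_n(v_n)\to 0$ for each $n$, hence convergence to $0$ in $X_3$. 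The continuous and injective embedding $X_2\hookrightarrow X_3$ then identifies the $X_2$-limit as $0$, contradicting $\|v_n\|_{X_2}>\eps$.

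Next I would establish the time-translation estimate. For $u\in W^{1,p,q}(I;X_1,X_3)$ and $h>0$, the fundamental theorem
$$
u(t+h)-u(t) \;=\; \int_t^{t+h} u'(s)\,ds
$$
(interpreted as a Bochner integral in a Fr\'echet completion of $X_3$) gives $p_k\bigl(u(t+h)-u(t)\bigr)\leq \int_t^{t+h}p_k(u'(s))\,ds$ for every seminorm $p_k$. Raising to the $p$-th power, integrating in $t$, and applying H\"older in $s$ yields a uniform bound of the form
$$
\Bigl(\int_I p_k\bigl(u(t+h)-u(t)\bigr)^p\,dt\Bigr)^{1/p} \leq C\,h^{1-1/q}\|u'\|_{L^q(I;X_3,\,p_k)},
$$
which tends to $0$ as $h\to 0^+$, uniformly in $\mathcal{F}$. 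Now I combine this with the Ehrling step: for each $\eps>0$,
$$
\|\tau_h u - u\|_{L^p(I;X_2)} \leq \eps\,\|\tau_h u - u\|_{L^p(I;X_1)} + C_\eps \Bigl(\int_I p_{k(\eps)}\bigl(u(t+h)-u(t)\bigr)^p\,dt\Bigr)^{1/p}.
$$
The first term is dominated by $2\eps$ times the uniform $L^p(I;X_1)$-bound on $\mathcal{F}$; choosing $\eps$ small, then $h$ small in dependence of $\eps$, proves the equicontinuity condition (b).

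For condition (a), fix a subinterval $J\subset I$ of small length; the averages $\frac{1}{|J|}\int_J u(t)\,dt$ lie in $X_1$ with a bound independent of $u\in\mathcal{F}$ (by Jensen and the $L^p(I;X_1)$-bound), hence form a relatively compact set in $X_2$. Together with (b) this verifies the Kolmogorov--Riesz--Fr\'echet criterion in $L^p(I;X_2)$, yielding the compact embedding. The main technical obstacle is the generalization of Ehrling's lemma to a metrizable locally convex Hausdorff target $X_3$ (in particular, correctly selecting the seminorm $p_{k(\eps)}$ from the defining family and handling the Bochner integral in a space that is not a Banach space); once this is overcome, everything else is a direct adaptation of the classical Simon/Aubin--Lions argument.
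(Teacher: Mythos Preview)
The paper does not prove this lemma at all; it is stated in the appendix as a cited auxiliary result (Lemma~7.7 of Roub\'{\i}\v{c}ek's monograph), so there is no proof in the paper to compare your proposal against.

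Your outline is the standard Simon/Aubin--Lions strategy and is essentially correct, but two details deserve tightening. First, in the Ehrling contradiction you only obtain $p_n(v_n)\le C/n$ along the diagonal, whereas you need $p_k(v_n)\to 0$ for every \emph{fixed} $k$ to conclude $v_n\to 0$ in $X_3$; this is easily fixed by first replacing the generating seminorms by the increasing family $\tilde p_n:=\sum_{j\le n}p_j$. Second, your translation bound $h^{1-1/q}$ degenerates to $1$ when $q=1$ and therefore does not give the required uniform vanishing in that endpoint case. A simple remedy for $q=1$ is to interpolate: from $p_k(u(t+h)-u(t))\le\int_t^{t+h}p_k(u')\,ds$ you get both an $L^\infty_t$-bound by $\|p_k(u')\|_{L^1}$ and an $L^1_t$-bound by $h\,\|p_k(u')\|_{L^1}$ (via Fubini), hence $\|p_k(\tau_h u-u)\|_{L^p_t}\le h^{1/p}\|p_k(u')\|_{L^1}$, which suffices. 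With these two adjustments your argument goes through.
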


\begin{lem}\label{div-curl}\textup{\textbf{(The div-curl lemma, \cite{conti2011thedivcurl})}}
    Let $\Omega\subset\R^n$ be an open and bounded domain with a Lipschitz boundary, and let $p,q\in (1+\infty)$ with $\frac{1}{p} + \frac{1}{q} = 1$. Suppose $\bu_k\in L^p(\Omega; \R^n)$, $\bv_k\in L^q(\Omega; \R^n)$ are sequences such that
    $$
    \bu_k \rightharpoonup \bu\text{ weakly in }L^p(\Omega; \R^n),\quad \bv_k\rightharpoonup \bv\text{ weakly in }L^q(\Omega; \R^n),
    $$
    and
    $$
    \bu_k\cdot \bv_k \text{ is equiintegrable}.
    $$
    Finally, assume that
    $$
    \DIV \bu_k \rightarrow \DIV \bu \text{ strongly in }(W^{1,\infty}_0(\Omega))^*, \quad \mathrm{curl}\,\bv_k \rightarrow \mathrm{curl}
    \, \bv \text{ strongly in }(W^{1,\infty}_0(\Omega; M^{n\times n}))^*.
    $$
    Then,
    $$
    \bu_k\cdot \bv_k\rightharpoonup \bu\cdot \bv\text{ weakly in }L^1(\Omega).
    $$

\end{lem}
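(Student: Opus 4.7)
The plan is to follow the classical Murat--Tartar compensated compactness strategy, specialized to the very weak duality $(W^{1,\infty}_0)^*$ as in the Conti--Dolzmann--M\"uller framework, and then to promote distributional convergence to weak $L^1$ convergence using the equi-integrability hypothesis via the Dunford--Pettis theorem. Since $u_k\cdot v_k$ is equi-integrable, it is relatively weakly sequentially compact in $L^1(\Omega)$; hence any weak-$L^1$ limit is uniquely determined by its action as a distribution, and it suffices to prove $\int_\Omega \varphi\, u_k\cdot v_k\diff x\to \int_\Omega \varphi\, u\cdot v\diff x$ for every $\varphi\in C_c^\infty(\Omega)$. Subtracting the weak limits and applying H\"older's inequality to the cross terms, I may assume $u=0$, $v=0$ and aim at $\int_\Omega \varphi\, u_k\cdot v_k\diff x\to 0$.

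Fix $\varphi\in C_c^\infty(\Omega)$ and an open ball $B$ with $\supp\varphi\Subset B\Subset \Omega$. On $B$ I perform a Helmholtz--Hodge decomposition of $v_k$: let $p_k\in W^{1,q}_0(B)$ solve $-\Delta p_k = -\DIV v_k$ weakly, and set $w_k:=v_k-\nabla p_k$, so $\DIV w_k=0$ and $\mathrm{curl}\,w_k=\mathrm{curl}\,v_k$. Calder\'on--Zygmund theory gives uniform boundedness of $p_k$ in $W^{1,q}_0(B)$, hence $p_k\rightharpoonup 0$ weakly in $W^{1,q}_0(B)$ and, by Rellich, $p_k\to 0$ strongly in $L^q(B)$. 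Splitting
\[
\int_\Omega \varphi\, u_k\cdot v_k\diff x \;=\; \int_B \varphi\, u_k\cdot \nabla p_k\diff x \;+\; \int_B \varphi\, u_k\cdot w_k\diff x,
\]
I integrate by parts in the first term to obtain $-\langle \DIV u_k,\,\varphi p_k\rangle - \int_B p_k\, u_k\cdot\nabla\varphi\diff x$; the latter integral vanishes by H\"older since $p_k\to 0$ in $L^q(B)$ and $u_k$ is bounded in $L^p(B)$. For the $w_k$ piece I apply the symmetric Hodge decomposition to $u_k$ on $B$, so the roles of $\DIV$ and $\mathrm{curl}$ exchange and the $\mathrm{curl}\,v_k$ strong convergence takes over.

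The principal obstacle is the very weak space $(W^{1,\infty}_0)^*$ in which $\DIV u_k$ and $\mathrm{curl}\,v_k$ are assumed to converge strongly: the natural test $\varphi p_k$ lies in $W^{1,q}_0\cap L^\infty_{\mathrm{loc}}$ but not in $W^{1,\infty}_0$, so the pairing $\langle\DIV u_k,\varphi p_k\rangle$ is not controlled directly by the given convergence. The resolution is to truncate and mollify simultaneously: set $p_k^M := (-M)\vee(p_k\wedge M)$ and convolve $\varphi p_k^M$ with a mollifier at scale $\varepsilon$ to produce $\varphi p_k^{M,\varepsilon}\in W^{1,\infty}_0(\Omega)$, uniformly bounded in $W^{1,\infty}$ for fixed $(M,\varepsilon)$. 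Then $\langle \DIV u_k, \varphi p_k^{M,\varepsilon}\rangle\to 0$ as $k\to\infty$ by the strong convergence hypothesis. The mollification error is controlled by the uniform $W^{1,q}$ bound on $p_k^M$, while the truncation remainder $p_k - p_k^M$ is supported on $\{|p_k|>M\}$, a set whose Lebesgue measure tends to $0$ uniformly in $k$ (by the $W^{1,q}$ bound and Sobolev embedding). The contribution of this remainder to $\int_\Omega \varphi\, u_k\cdot v_k\diff x$ is dominated by $\sup_k \int_E |u_k\cdot v_k|\diff x$ taken over sets $E$ of small measure, which is precisely the quantity that vanishes by the equi-integrability assumption. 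A diagonal passage $k\to\infty$, then $\varepsilon\to 0$, then $M\to\infty$ closes the argument; an analogous reasoning for the $w_k$ term finishes the proof.
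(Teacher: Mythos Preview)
The paper does not supply a proof of this lemma: it is recorded in the appendix as an auxiliary proposition with a citation and is used as a black box in Section~\ref{S:3}. There is therefore no proof in the paper to compare your attempt against.

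On the substance of your sketch: the overall Murat--Tartar strategy and the reduction via Dunford--Pettis are fine, and you correctly isolate the main obstacle, namely that $\varphi p_k\notin W^{1,\infty}_0$ so the $(W^{1,\infty}_0)^*$-hypothesis does not directly control $\langle\DIV u_k,\varphi p_k\rangle$. The proposed repair, however, does not close. First, after the value truncation $p_k^M=(-M)\vee(p_k\wedge M)$ and mollification, the error term
\[
\langle\DIV u_k,\varphi(p_k^M-p_k^{M,\varepsilon})\rangle=-\int_B u_k\cdot\nabla\bigl(\varphi(p_k^M-p_k^{M,\varepsilon})\bigr)\diff x
\]
is only bounded by $C\|\nabla p_k^M-\nabla p_k^{M,\varepsilon}\|_{L^q}$, and this quantity does \emph{not} tend to $0$ uniformly in $k$ as $\varepsilon\to 0$: mollification approximates in $W^{1,q}$ function by function, not uniformly on $W^{1,q}$-bounded sets. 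Your ``uniform $W^{1,q}$ bound on $p_k^M$'' gives boundedness of this error, not smallness, so the diagonal passage $k\to\infty$, then $\varepsilon\to 0$, leaves an $O(1)$ residue. Second, the truncation remainder you must control is $\int_{\{|p_k|>M\}}\varphi\,u_k\cdot\nabla p_k\diff x$, not $\int_{\{|p_k|>M\}}\varphi\,u_k\cdot v_k\diff x$; equi-integrability of $u_k\cdot v_k$ does not bound the former, and rewriting $\nabla p_k=v_k-w_k$ merely shifts the uncontrolled piece to $u_k\cdot w_k$ on the small set, for which no smallness is available (neither $|u_k|^p$ nor $|w_k|^q$ is assumed equi-integrable). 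The same issue reappears symmetrically in the $w_k$-branch of your argument.

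What is actually needed is a truncation that makes the \emph{gradient} of the potential bounded --- a Lipschitz truncation of Acerbi--Fusco type rather than a value truncation --- together with bookkeeping that routes the exceptional-set contribution back through the equi-integrable product $u_k\cdot v_k$. As written, the argument has a genuine gap at precisely the step that distinguishes the cited result from the classical $W^{-1,r}$ version of the div--curl lemma.
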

The next very classical result is about the integration by parts formula in Bochner function spaces. 
\begin{lem}\label{lem:integration_of_lipschitz}
    Let $1 <p, q < +\infty$. Suppose $\psi:\R\rightarrow\R$ is a Lipschitz function. For $r\in\R$ we define
    $$
            \Psi(x) = \int_r^x \psi(s)\diff s,\quad x\in\R.
    $$
    Then, for any $u\in W^{1, p, p}(I; W^{1,q}(\Omega)) := \left\{u\in L^p(I; W^{1,q}(\Omega)); \frac{du}{dt}\in (L^p(I; W^{1,q}(\Omega)))^*\right\}$ it holds
    $$
    \int_{t_1}^{t_2}\langle\p_t u, \psi(u)\rangle\diff t = \int_\Omega \Psi(u(t_2))\diff x - \int_{\Omega}\Psi((u(t_1))\diff x,\quad t_1, t_2\in I.
    $$
\end{lem}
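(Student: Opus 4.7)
The plan is to establish the identity by a double approximation: first regularize $u$ in time so that the chain rule holds classically pointwise in $t$, and then pass to the limit, using a secondary smoothing of $\psi$ to handle the fact that Lipschitz functions are only differentiable almost everywhere.

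Before approximating, verify that $\psi(u) \in L^p(I; W^{1,q}(\Omega))$ so the duality pairing on the right is well defined. Since $\psi$ is Lipschitz with some constant $L$, one has $|\psi(u)| \le |\psi(0)| + L|u|$ pointwise, and the classical Sobolev--Lipschitz chain rule gives $\nabla_x\psi(u) = \psi'(u)\nabla_x u$ a.e.\ with $|\nabla_x\psi(u)| \le L|\nabla_x u|$, so $\|\psi(u)\|_{L^p(I;W^{1,q})} \lesssim 1 + \|u\|_{L^p(I;W^{1,q})}$. Using the reflexivity of $W^{1,q}(\Omega)$ for $1<q<\infty$ we may identify $(L^p(I;W^{1,q}))^* = L^{p'}(I;(W^{1,q})^*)$, so $\p_t u$ is a Bochner-measurable function of time with values in $(W^{1,q})^*$.

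Extend $u$ to a neighbourhood of $I$ by reflection, fix a standard mollifier $\rho_h$ in time, and set $u_h := \rho_h *_t u$. On any $I'\subset\subset I$ one has $u_h\in C^\infty(I'; W^{1,q}(\Omega))$, $\p_t u_h = \rho_h *_t \p_t u$, and
\begin{align*}
u_h &\to u &&\text{strongly in } L^p(I'; W^{1,q}(\Omega)),\\
\p_t u_h &\to \p_t u &&\text{strongly in } L^{p'}(I'; (W^{1,q}(\Omega))^*).
\end{align*}
For each $h$ the map $t\mapsto \int_\Omega \Psi(u_h(t))\,\diff x$ is $C^1$ on $I'$, so the classical chain rule gives
$$
\int_\Omega \Psi(u_h(t_2))\,\diff x - \int_\Omega \Psi(u_h(t_1))\,\diff x = \int_{t_1}^{t_2}\int_\Omega \psi(u_h)\,\p_t u_h \,\diff x\,\diff t
$$
for every $t_1,t_2\in I'$.

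The last step is to let $h\to 0^+$. The main technical obstacle is the strong convergence $\psi(u_h)\to\psi(u)$ in $L^p(I';W^{1,q}(\Omega))$: Lipschitz continuity and a.e.\ convergence along a subsequence handle the $L^p(L^q)$ part, but for the gradient one must rewrite
$$
\nabla_x\psi(u_h) - \nabla_x\psi(u) = \psi'(u_h)(\nabla_x u_h - \nabla_x u) + (\psi'(u_h) - \psi'(u))\nabla_x u,
$$
and control the second term with $\psi'$ defined only almost everywhere. To bypass this I would introduce a secondary approximation $\psi_k\in C^1(\R)$ with $\|\psi_k'\|_\infty\le L$ and $\psi_k\to\psi$ locally uniformly (hence $\psi_k'\to\psi'$ a.e.), prove the identity for $(\psi_k,\Psi_k)$ in place of $(\psi,\Psi)$ (where the gradient issue is immediate from dominated convergence) and then pass $k\to\infty$ using the uniform bound $L$. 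Together with the mollification limit this yields the identity for almost every pair $t_1,t_2\in I$. Finally, since the right-hand side equals $\int_{t_1}^{t_2}F(t)\,\diff t$ for the integrable function $F(t):=\langle \p_t u(t), \psi(u)(t)\rangle$, the map $t\mapsto \int_\Omega\Psi(u(t))\,\diff x$ admits an absolutely continuous representative; selecting this representative promotes the identity from a.e.\ to all $t_1,t_2\in I$.
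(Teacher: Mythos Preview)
The paper states this lemma in the appendix as an auxiliary result without proof, alongside the Aubin--Lions and div--curl lemmas; there is no argument in the paper to compare against. Your mollification-in-time scheme is precisely the standard textbook proof of such chain-rule identities and is essentially correct.

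Two small remarks. First, the secondary $C^1$-approximation of $\psi$ can be dispensed with: since $\psi(u_h)$ is bounded in $L^p(I';W^{1,q})$ (by your opening paragraph) and converges to $\psi(u)$ strongly in $L^p(I';L^q)$ (Lipschitz continuity plus dominated convergence), it converges \emph{weakly} in $L^p(I';W^{1,q})$; pairing this weak convergence with the \emph{strong} convergence $\p_t u_h\to\p_t u$ in $L^{p'}(I';(W^{1,q})^*)$ already yields convergence of $\int_{t_1}^{t_2}\langle\p_t u_h,\psi(u_h)\rangle\,\diff t$. Second, a point you leave implicit is that identifying $\int_\Omega\psi(u_h)\,\p_t u_h\,\diff x$ with the duality pairing $\langle\p_t u_h,\psi(u_h)\rangle_{(W^{1,q})^*,W^{1,q}}$, as well as the finiteness of $\int_\Omega\Psi(u(t))\,\diff x$ for every $t$ (recall $\Psi$ has quadratic growth), relies on the Gelfand-triple structure $W^{1,q}(\Omega)\hookrightarrow L^2(\Omega)\hookrightarrow(W^{1,q}(\Omega))^*$ and the resulting continuous embedding of the evolution space into $C(\bar I;L^2)$. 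In the two-dimensional setting of the paper this is automatic for every $q>1$, but it is worth making explicit.
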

\iffalse
The last very classical result also deals with the properties of Bochner spaces. 
\begin{lem}\label{L2convlemma}
Let $1 \leq p < \infty$ and $\{u_n\}_{n \in \N}$ be a sequence such that $u_n \to u$ in $L^p_t L^p_x$. Then, there exists a subsequence $\{u_{n_k}\}_{k \in \N}$, such that
$$
\mbox{for a.e. } t \in (0,T) \qquad u_{n_k}(t,x) \to u(t,x) \mbox{ in } L^p_x
$$
Moreover, if $\{u_{n_k}\}_{k \in \N}$ is bounded in $L^{\infty}_t L^2_x$ we have for a.e. $t \in (0,T)$
$$
\int_{\Omega} |u(t,x)|^2 \diff x \leq \liminf_{k \to \infty} \int_{\Omega} |u_{n_k}(t,x)|^2 \diff x
$$
\end{lem}
\fi

\section*{Data Availability} Data sharing is not applicable to this article as no data sets were generated or analysed during the current study.

\section*{Declarations - Conflict of interest} The authors do not have a conflict of interest to declare that are relevant to the content of this article.

\bibliographystyle{abbrv}
\bibliography{viscoelastic}
\end{document}